\tikzset{VertexStyle/.style ={shape= circle, fill = white, inner sep = 0pt, outer sep = 0pt, minimum size = 4pt,draw}}
\tikzset{EdgeStyle/.style={>=stealth,->}}
\tikzset{R0/.style={dotted}}
\tikzset{R1/.style={}}
\tikzset{CO/.style={dotted}}
\newcounter{k}\newcounter{M}
\newcounter{kk}
\newcommand*\charac\chi
\DeclareMathOperator{\Star}{Star}
\DeclareMathOperator{\Stab}{Stab}
\DeclareMathOperator{\Id}{Id}
\DeclareMathOperator{\der}{der}
\DeclareMathOperator{\rank}{rk}
\DeclareMathOperator{\lcm}{lcm}
\DeclareMathOperator{\cayl}{Cay}
\DeclareMathOperator{\schrei}{Sch}
\DeclareMathOperator{\Aut}{Aut}
\DeclareMathOperator{\DL}{DL}
\newcommand*{\acts}{.}
\newcommand*{\lamp}{\mathcal L}
\newcommand*{\A}{\mathcal A}
\newcommand*{\defi}[1]{\emph{#1}}
\newcommand*{\tn}[1]{\textnormal{#1}} 
\newcommand*{\defegal}{\coloneqq}
\newcommand*{\iso}{\simeq}
\newcommand*{\field}[1]{\mathbf{#1}}
\newcommand*{\Z}{\field{Z}}
\newcommand*{\N}{\field{N}}
\newcommand*{\barN}{\overline{\N}}
\newcommand*{\Nstar}{\N_0}
\newcommand*{\hyphen}{\nobreakdash-\hspace{0pt}}
\newcommand*{\setst}[2]{\{#1 \mid #2\}}
\newcommand*{\presentation}[2]{\langle#1\mid#2\rangle}
\DeclarePairedDelimiter\abs{\lvert}{\rvert}
\newcommand*{\Bruijn}{\mathcal{B}}
\newcommand*{\sw}{\mathcal{S}}
\newcommand*{\orB}{\vec\Bruijn}
\newcommand*{\unB}{\Bruijn}
\newcommand*{\orSchrei}[3]{\vec\schrei(#1,#2,#3)}
\newcommand*{\unSchrei}[3]{\schrei(#1,#2,#3)}
\newcommand*{\orCayley}[2]{\vec\cayl(#1,#2)}
\newcommand*{\unCayley}[2]{{\cayl(#1,#2)}}
\newcommand*{\orkSw}[3]{\vec\sw_{#1,#2,#3}}
\newcommand*{\unkSw}[3]{{\sw_{#1,#2,#3}}}
\newcommand*{\orSw}[3]{\vec\sw_{#2,#3}}
\newcommand*{\unSw}[3]{{\sw_{#2,#3}}}
\newtheorem{proposition}{Proposition}[subsection]
\newtheorem{theorem}{Theorem}[subsection]
\newtheorem{lemma}{Lemma}[subsection]
\newtheorem{corollary}{Corollary}[subsection]
\newtheorem{proposition2}{Proposition}[section]
\newtheorem{theorem2}{Theorem}[section]
\newtheorem{lemma2}{Lemma}[section]
\theoremstyle{definition} 
\newtheorem{definition}{Definition}[subsection]
\newtheorem{notation}{Notation}[subsection]
\newtheorem{remark}{Remark}[subsection]
\newtheorem{definition2}{Definition}[section]
\newtheorem{remark2}{Remark}[section]
\newcommand*{\titre}{
Lamplighter groups, de Bruijn graphs, spider-web graphs and their spectra}
\title{\titre}
\author{R. Grigorchuk\thanks{The authors were supported by the Swiss National Science Foundation. R.G. was also supported by NSF  grant  DMS-1207669}, P.-H. Leemann and T. Nagnibeda}
\date{\today}
\providecommand\keywords[1]{\textbf{Keywords:} #1}
\begin{document}
\maketitle
%
%
%
%
%
\begin{abstract}
We study the infinite family of spider-web graphs $\{\unkSw{k}{N}{M}\}$, $k\geq 2$, $N\geq 0$ and $M\geq 1$, initiated in the 50-s in the context of network theory.
It was later shown in physical literature that these graphs have remarkable percolation and spectral properties.
We provide a mathematical explanation of these properties by putting the spider-web graphs in the context of group theory and algebraic graph theory.
Namely, we realize them as tensor products of the well-known de Bruijn graphs $\{\unB_{k,N}\}$ with cyclic graphs $\{ C_M\}$ and show that these graphs are described by the action of the lamplighter group $\lamp_k = \Z/k\Z\wr \Z$ on the infinite binary tree.
Our main result is the identification of  the infinite limit of $\{\unkSw{k}{N}{M}\}$, as $N,M \rightarrow \infty$, with the Cayley graph of the lamplighter group $\lamp_k$ which, in turn, is one of the famous Diestel-Leader graphs $DL_{k,k}$.
As an application we compute the spectra of all spider-web graphs and show their convergence to the discrete spectral distribution associated with the Laplacian on the lamplighter group.
\end{abstract}
\keywords{The limit of graphs, de Bruijn graphs, lamplighter groups, Diestel-Leader graphs, spider-web graphs, spectra.}

%
%
%
%
%
\section{Introduction}

Exchange of methods and ideas between physics and mathematics has a long history and led to many spectacular results.
Spectral theory is one of many examples of such a fruitful interaction.

The goal of this paper is to show how group theory can be successfully applied to explain and study some objects of interest in physics and in the network theory, with emphasis on their spectral properties. 

Spider-web networks were introduced by Ikeno in 1959 \cite{Ikeno} in order to study systems of telephone exchanges.
They were later shown to enjoy interesting properties in percolation, see \cite{MR1099797}, \cite{MR1176870} and \cite{MR2257252}.
Our work stems from the paper~\cite{MR2904386} by Balram and Dhar where they are interested in the asymptotic properties of the sequence of spider-web graphs $\{\unkSw{k}{N}{M}\}$, for $k=2$.
In particular, they find, using an interesting approach based on symmetries, the spectra of graphs $\unkSw{2}{N}{M}$ and observe that they converge to a discrete limiting distribution as $M,N \rightarrow \infty$.

Here, we develop a method that leads to the full understanding of this infinite discrete model, including its spectral characteristics, via finite approximations, using the notion of Benjamini-Schramm limit of graphs that has lately become very important in probability theory.
A remarkable feature of the model that we discover is that it is related to one of the most interesting and important test-cases in combinatorial group theory, both algebraically and from the spectral and probabilistic viewpoints, the lamplighter groups.

It is interesting to observe that the study of more and more models in pure and applied mathematics, theoretical and statistical physics and in computer science see abstract groups appearing naturally, not only describing symmetries, but also serving as non-commutative time scales in dynamical systems, describing monodromies, providing automatic structure etc.
However, the work was mostly done on the cubic lattices and the Bethe lattice and its close relatives (as e.g. the modular group of the surface groups), in relation to percolation, the Ising model, the sandpile model and many more.
It is therefore particularly interesting that, as we show, a lattice with very different geometry, the Diestel-Leader graph associated to the lamplighter group, arises naturally as the limit of the spider-web graphs.
The explicit identification, presented here, of this infinite lattice as the limit of spider-web graphs, leads immediately to spectral results, but also potentially to future advancements in the study of percolation on these graphs, as explained in  \cite{MR2257252}.

The aim of the present paper is therefore to provide a unified rigorous framework for studying spider-web graphs $\unkSw{2}{N}{M}$, for any $k\geq 2$, and their spectra, and to identify their limit, as $M,N\rightarrow\infty$,  as a particular Cayley graph of the lamplighter group $\lamp_k=\Z/k\Z\wr \Z$ (see Subsection~\ref{subsectionLamplighter} for the definition) known (see \cite{MR2138121}) as the Diestel-Leader graph .

Convergence of spider-web graphs to this graph comes from the following structural result that we prove.
For any $k\geq 2$, the oriented spider-web graph $\orkSw{k}{N}{M}$ decomposes into the tensor product of the graph $\orkSw{k}{N}{1}$ and the oriented cycle $\vec C_M$ of length $M$.
It is then useful to note that the sequence $\orkSw{k}{N}{1}$ is nothing else than the well-studied sequence of de Bruijn graphs, see Subsection~\ref{subsectionBruijn}.
De Bruijn graphs are famous for their useful connectivity properties and, being both Hamiltonian and Eulerian, are used both in mathematics, where they represent word overlaps in symbolic dynamical systems, and in applications, as for example for the discrete model for the Bernoulli map or for genome assembly in bioinformatics \cite{CPT}.
Our results imply that, for each $k\geq 2$, the two-parameter family $\{\unkSw{k}{N}{M}\}$ of spider-web graphs is in fact a natural extension of the family of de Bruijn graphs $\unB_{k,N}$.

We then prove a result of independent interest, that de Bruijn graphs are isomorphic to another well-known sequence of finite graphs provided by a self-similar action of the lamplighter group $\lamp_k$ by automorphisms on the $k$-regular rooted tree, see \cite{MR1866850}. 
Our main result then follows: the sequence of spider-web graphs $\orkSw{k}{N}{M}$ (respectively $\unkSw{k}{N}{M}$) converges, as $M,N \rightarrow \infty$ to the Cayley graph of the lamplighter group $\lamp_k$, see Theorem~\ref{ThmConvergenceM1} (respectively Corollary~\ref{CorConvergenceUnoriented}).
There is also an alternative more direct way to prove that de Bruijn graphs $\unB_{k,N}$ (as well as the spider-web graphs) converge to the Diestel-Leader graph $\DL(k,k)$ \cite{PH}.

The spectra of de Bruijn graphs have been computed by Delorme and Tillich in \cite{MR1621013}.
We extend this computation to all spider-web graphs by using their tensor product structure.
The spectral approximation in the context of Benjamini-Schramm limits (see Definition~\ref{DefBSConv})  then ensures that the spectra of finite spider-web graphs converge to the spectral distribution corresponding to the limit graph.
As mentioned above, this spectral distribution coincides with one of those associated with the lamplighter group.


The spectral theory of discrete Laplacians on lattices and on Cayley graphs is a very popular topic related to the theory of random walks on groups initiated by Kesten, Atiyah's theory of $L^2$-invariants, Kadison-Kaplansky Conjecture and many more.
It can be viewed as a discrete analogue of the famous Kac's question ``Can one hear the shape of a drum''.
The lamplighter group is a very interesting object from the viewpoint of spectral theory. It was open for a longtime whether the Laplacian spectrum on a Cayley graph can have a discrete component.
This was answered in \cite{MR1866850} where it was shown that the spectrum of a certain Cayley graph of the lamplighter group is pure point. 

On the other hand, it follows from \cite{MR1952401} by Elek that for the ``standard'' generating set (the one that corresponds to the algebraic structure of the lamplighter group), the spectrum contains no eigenvalue.
This is illustrated on Figure~\ref{FigSPectre}, where the left column  corresponds to the Diestel-Leader graph $\DL(2,2)$ which, as we have already mentioned, is isomorphic to a specific Cayley graph of $\lamp_2$, see \eqref{presentation2} on page~\pageref{presentation2}, whereas the right column corresponds to the Cayley graph of $\lamp_2$ with respect to the standard generating set (see \eqref{presentation1} on page~\pageref{presentation1}).

This is the first example of a dramatic change that the Laplacian spectrum can undergo under local perturbations, even in the presence of a large underlying group of symmetries.
Recently other examples of this type were discovered by the first and the third authors in collaboration with Lenz \cite{GLN1, GLN2}, in the context of group actions with aperiodic order. 

The first two lines of Figure~\ref{FigSPectre} are the histograms of the spectral measure (respectively for linear and logarithmic $y$-axes) and the last line shows the corresponding density functions.
In both cases the graphics correspond to approximations of the infinite graph by graphs with $2^N$ vertices (provided by the action of $\lamp_2$ on the infinite full binary tree, see Subsection~\ref{SubSecActionLk}).
On the left, the Diestel-Leader graph is approximated by de Bruijn (and equivalently spider-web $\unkSw{2}{N}{M}$ for any $M$) graphs (see Remark \ref{RmqIndepM}); in this case the exact spectral measure is known (\cite{MR1866850}, see also \eqref{SpectralMeasureInfinite} 
on page~\pageref{SpectralMeasureInfinite}).
It is not known for the Cayley graph of $\lamp_k$ with respect to the standard generators.
\begin{figure}[htp]
\centering
\includegraphics[scale=0.64]{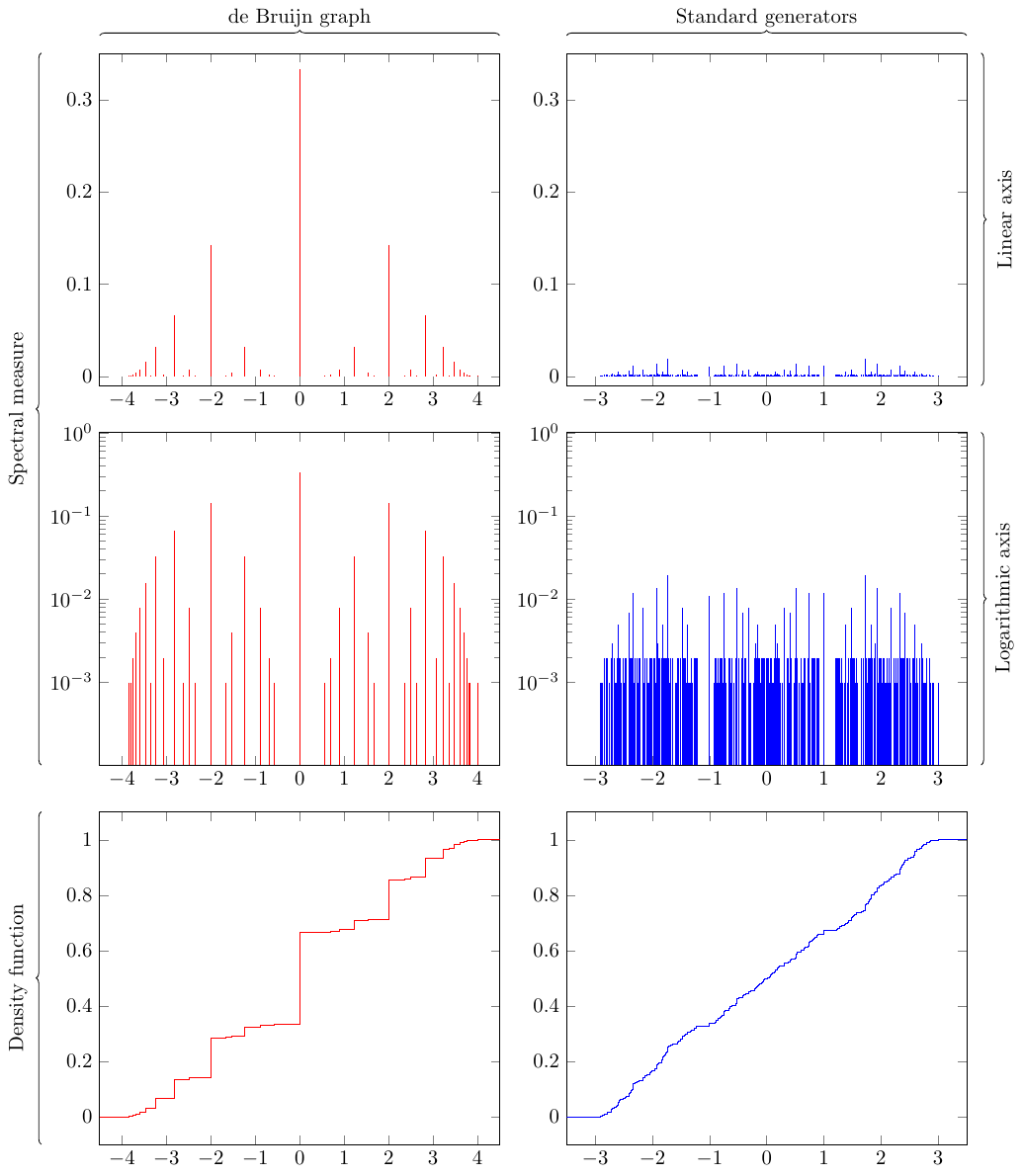}
\caption{
Approximations of the spectral measure and of the density function of two Cayley graphs of the lamplighter group $\lamp_2$.
The standard one on the right (see Subsection~\ref{subsectionLamplighter}), and the Diestel-Leader graph $\DL(2,2)$ approximated by de Bruijn (and spider-web $\unkSw{2}{N}{M}$, any $M$) graphs on the left.
The histograms correspond to graphs with $2^N$, $N=10$, vertices.
}
\label{FigSPectre}
\end{figure}

The paper is organized as follows.
Section~\ref{sectionConv} introduces all the relevant notions from graph theory and contains some useful preliminary results.
In particular, we recall the notion of the topological space of marked graphs.
We then turn our attention to the study of tensor product of graphs in Section~\ref{sectionTensor}.
In Subsection~\ref{SubSTensorConv} we investigate how the tensor product behaves under the convergence in the space of marked graphs.
Starting from Subsection~\ref{SubSTensorSchrei} we specialize to the case of graphs defined by a group action (so called Schreier graphs, see definition~\ref{DefCayley}), and further to the case of the lamplighter group in Subsection~\ref{subsectionLamplighter}.
The structure of spider-web graphs is analyzed in Section~\ref{sectionSpider}.
Subsection~\ref{sectionSpiderLamplighter} establishes in particular a connection between spider-web graphs and lamplighter groups.
Section~\ref{sectionSpectra} contains spectral computations on spider-web graphs.
In the last Section~\ref{sectionGeneralResults} we provide some further results about spider-web graphs and their relation to lamplighters. 
It turns out that all  $\orkSw{k}{N}{M}$ are Schreier graphs of the lamplighter group $\lamp_k$ (Theorem~\ref{ThmWeakIsomorphism}) and we identify the subgroups to which they correspond (Theorems~\ref{ThmWeakIsomorphism} and~\ref{ThmSwSchreier}).
It is then shown in Theorem~\ref{thmTransitivity} that for all $k$, the graph $\orkSw{k}{N}{M}$ is transitive if and only if $M\geq N$.
In Theorem~\ref{thmSpiderasfiniteCayley}, we show that if moreover $N$ divides $M$, it is a Cayley graph of a finite quotient of the lamplighter group $\lamp_k$.

The authors would like to thank Vadim Kaimanovich for his interest in this work and for inspiring discussions on the subject of the paper.
They also would like to thank the anonymous referee for his careful reading and his valuable remarks.
%
%
%
%
\section{Definitions and preliminaries}\label{sectionConv}
In this paper we deal with both oriented and non-oriented graphs and allow loops and multiple edges.
It will be convenient for us to work with the definition of a graph suggested by Serre \cite{MR0476875}.
\begin{definition2}\label{DefGraph}
A \defi{(non-oriented) graph} $\Gamma=(V,E)$ consists of two disjoint sets $V$ (\defi{vertices}) and $E$ (\defi{oriented edges}), and three functions $\iota,\tau\colon E\to V$ (initial vertex and end vertex) and $\bar{\phantom{e}}\colon E\to E$ (the inverse edge) satisfying $\iota(\bar e)=\tau(e)$, $\bar{\bar{ e}}=e$ and $\bar e\neq e$.
A \defi{non-oriented edge} is a pair $\{e,\bar e\}$.

An \defi{oriented graph} $\vec\Gamma=(V,\vec E)$ is given by a set of vertices $V$, a set of oriented edges $E$ and two functions $\iota,\tau\colon E\to V$ with no conditions on them.
To avoid confusion, from now on we will always write \defi{graph} for non-oriented graph and \defi{oriented graph} otherwise.

An \defi{orientation} $\mathcal O$ on a graph $\Gamma$ is the choice of an edge in each of the pairs $\{e,\bar e\}$.
For each choice of an orientation $\mathcal O$ on $\Gamma=(V,E)$, we define the oriented graph $\vec\Gamma=(V,\vec E)$ where $\vec E=\mathcal O$ and $\iota$ and $\tau$ are restrictions on $\vec E$ of the original functions.

The \defi{underlying graph} of an oriented graph $\vec\Gamma=(V,\vec E)$ is the graph $\underline{\vec\Gamma}(V,E)$, with $E\defegal\vec E\sqcup\setst{\bar e}{e\in \vec E}$, where $\bar e$ is the formal inverse of $e$.
For $\bar e$, we define $\iota(\bar e)\defegal\tau(e)$, $\tau(\bar e)\defegal\iota(e)$ and $\bar{\bar e}\defegal e$.
\end{definition2}
The operations of choosing an orientation on a graph and of taking the underlying graph of an oriented graph are mutually inverse in the following sense.
Given a graph $\Gamma$, the underlying graph of the oriented graph obtained by choosing an orientation on $\Gamma$, is $\Gamma$ itself.
On the other hand, given an oriented graph $\vec\Gamma$ there exists an orientation on the underlying graph such that the resulting oriented graph is $\vec\Gamma$ itself.

\begin{remark2}
In this paper we will only consider connectedness in the weak (non-oriented) sense. In particular, a connected component of $\vec\Gamma$ is a connected component of $\underline{\vec\Gamma}$ with the orientation coming from $\vec\Gamma$.
\end{remark2}

Let $\Gamma=(V,E)$ be a graph, oriented or not.
The \defi{in-degree}, respectively the \defi{out-degree}, of a vertex $v$ is the number of edges $e$ with initial vertex $v$, respectively end vertex $v$.
If the graph is non-oriented, then both notions coincide and are simply called \defi{degree}.
The graph $\Gamma$ is said to be \defi{locally finite} if every vertex has both finite in-degree and finite out-degree.
%
%
Note that if $e$ is a loop in a graph, it contributes $1$ to the in-degree, but its inverse edge $\bar e$ also contributes $1$.
Therefore, the non-oriented loop $\{e,\bar e\}$ contributes $2$ to the degree, since $\bar e\neq e$.

The \defi{adjacency matrix} of graph $\Gamma$ is the symmetric matrix $A_\Gamma=(a_{ij})_{i,j\in V}$ with $a_{ij}$ the number of edges from $i$ to $j$.
For an oriented graph $\vec \Gamma$ the adjacency matrix is not necessary symmetric, but we have: $A_{\underline{\vec\Gamma}}=A_{\vec\Gamma}+A_{\vec\Gamma}^T$.

A \defi{morphism of oriented graphs} $\vec\Gamma_1\to\vec\Gamma_2$ is a function $\phi\colon V_1\cup \vec E_1\to V_2\cup \vec E_2$ such that $\phi(V_1)\subseteq V_2$, $\phi(\vec E_1)\subseteq \vec E_2$ and for every edge $e$ in $\Gamma_1$, we have $\iota\bigl(\phi(e)\bigr)=\phi\bigl(\iota(e)\bigr)$ and $\tau\bigl(\phi(e)\bigr)=\phi\bigl(\tau(e)\bigr)$.
A \defi{morphism of graphs} is defined in the same way, with the additional requirement that $\overline{\phi(e)}=\phi(\bar e)$.
Let \(\Delta\) be a graph (oriented or not) and let \(v\) any vertex of \(\Delta\).
The \defi{star} of \(v\) is the set \(\setst{e\in E}{\iota(e)=v}\).
Remark that any morphism \(\phi\colon\Delta_1\to \Delta_2\) induces, for any vertex \(v\) of \(\Delta_1\), a~map: $\phi_v\colon\Star_v\to\Star_{\phi(v)}$.
A morphism \(\phi\colon\Delta_1\to\Delta_2\) is a \defi{covering} if all the induced maps \(\phi_v\) are bijections.
In this case, we say that $\Delta_1$ \defi{covers} $\Delta_2$.

Let $\Gamma$ be a  graph. A \defi{path} $p$ in $\Gamma$ from $v$ to $w$ is an ordered sequence of edges $(e_1,e_2,\dots,e_n)$ such that $\iota(e_1)=v$, $\tau(e_n)=w$ and for all $1\leq i<n$ we have $\tau(e_i)=\iota(e_{i+1})$.
The inverse of the path $p=(e_1,e_2,\dots,e_n)$ is the path $\bar p=(\bar e_n,\dots,\bar e_1)$.
The \defi{length} of a path $(e_1,e_2,\dots,e_n)$ is equal to $n$.
A path is said to be \defi{reduced} if it does not contain subsequences of the form $e\bar e$.
\begin{definition2}\label{defDerangement}
Let $\Gamma$ be a non-oriented graph, $p=(e_1,\dots, e_n)$ a path of length $n$ in $\Gamma$ and $\mathcal O$ an orientation on $\Gamma$.
The \defi{signature $\sigma(p)$ of $p$ with respect to $\mathcal O$} is an ordered sequence of $\pm1$ of length $n$, where there is a $1$ in the position $i$ if and only if $e_i$ belongs to $\mathcal O$ and a $-1$ otherwise. 

The \defi{derangement of $p$ with respect to $\mathcal O$}, $\der(p)$, is the sum of the $\pm 1$ in the signature of $p$. The derangement of a path of length $0$ is $0$.
It follows from the definition that $\der(\bar p)=-\der(p)$ and that $\sigma(\bar p)$ is the sequence $-\sigma(p)$ readed backward.

The \defi{derangement of $\Gamma$ with respect to $\mathcal O$} is 
\[
\der(\Gamma)\defegal\min\setst{\abs{\der(p)}}{p \tn{ is a closed path in }\Gamma\tn{ and }\der(p)\neq 0},
\]
where this minimum is defined to be $0$ if there is no closed path in $\Gamma$ with non-zero derangement.
\end{definition2}
We also need a variant of this definition for an oriented graph $\vec\Gamma=(V,\vec E)$ and $p$ a path in the underlying graph.
The \emph{signature of $p$}, respectively the \emph{derangement of $p$}, are the signature, respectively the derangement, of $p$ with respect to the orientation coming from $\vec\Gamma$.
The \emph{derangement} of $\vec\Gamma$ is $\der(\vec\Gamma)\defegal\der(\underline{\vec\Gamma})$, for the orientation on $\underline{\vec\Gamma}$ coming from $\vec\Gamma$.
\begin{definition2}\label{DefTopo}
A \defi{marked graph} is a couple $(\Gamma,v)$ where $\Gamma$ is a graph and $v$ a vertex of $\Gamma$, called the \defi{root} of the marked graph.
For an (oriented) marked graph $(\Gamma,v)$ we will denote by $(\Gamma,v)^0$ the connected component containing~$v$.

We denote $\mathcal G_*$ (respectively $\vec{\mathcal G}_*$) the set of connected marked (respectively connected oriented marked) graphs, up to isomorphisms of marked graphs.
\end{definition2}

The set $\mathcal G_*$ (respectively $\vec{\mathcal G}_*$) can be topologized by considering for example the following distance:
$d\bigl((\Gamma,v),(\Delta,w)\bigr)=\frac{1}{1+r}$, where $r$ is the biggest integer such that the ball of radius $r$ centered at $v$ in $\Gamma$ and the ball of the same radius centered at $w$ in $\Delta$ are isomorphic as marked (respectively marked oriented) graphs.
If the two graphs are isomorphic as marked graphs, then the distance is defined to be $0$.
For an oriented marked graph $(\vec \Gamma,v)$, the ball of radius $r$ centered at $v$ is the oriented subgraph of $\vec\Gamma$ such that its underlying graph is the ball of radius $r$ centered at $v$ in $\underline{\vec\Gamma}$.
For any integer $d$, the subspaces $\mathcal G_{*,\leq d}$ of $\mathcal G_*$ and $\vec{\mathcal G}_{*,\leq d}$ of $\vec{\mathcal G}_*$ consisting of graphs with both maximal in-degree and out-degree bounded by $d$ are compact.

It is easy to check that, if $(\vec\Gamma,v)$ and $(\vec\Delta,w)$ are two oriented marked graphs, then 
\[
d_{\vec{\mathcal G_*}}\bigl((\vec\Gamma,v),(\vec\Delta,w)\bigr)\geq d_{\mathcal G_*}\bigl((\underline{\vec\Gamma},v),(\underline{\vec\Delta},w)\bigr).
\]
It immediately implies the following proposition.
\begin{proposition2}\label{ConvergenceOrientedVsNonOriented}
If a sequence of oriented marked graphs $(\vec\Gamma_n,v_n)$ converges to $(\vec\Gamma,v)$, then the sequence $(\underline{\vec\Gamma_n},v_n)$ converges to $(\underline{\vec\Gamma},v)$.
\end{proposition2}

Since $\mathcal G_{*,\leq d}$ is a metric space which is separable, compact and complete, by Prokhorov's Theorem \cite{MR0084896} the space of Borel probability measures on it is compact in the weak topology.
There is a natural way to attach a Borel probability measure to a finite graph $\Gamma$: by choosing the root uniformly at random.
More formally, the measure associated to $\Gamma$ is 
$
\frac 1{\abs V}\sum_{v\in V}\delta_{(\Gamma,v)^0}
$, where $\delta$ is a Dirac measure.

\begin{definition2}[\cite{MR1873300}]\label{DefBSConv}
Let $\Gamma_n$ be a sequence of finite graphs and let $\lambda_{\Gamma_n}$ be the Borel probability measures associated.
We say that $\Gamma_n$ is \defi{Benjamini-Schramm convergent} with limit $\lambda$ if $\lambda_{\Gamma_n}$ converges to $\lambda$ in the weak topology in the space of Borel probability measures on $\mathcal G_*$.

In the particular case where $\lambda$ is a Dirac measure concentrated on one transitive graph $\Gamma$, we say that $\Gamma_n$ converges to $\Gamma$ in the sense of Benjamini-Schramm.
\end{definition2}
The same definitions hold in $\vec{\mathcal G_*}$.
In this paper we will deal with Schreier graphs coming from group actions, so we also need to establish a similar setup for labeled graphs.
\begin{definition2}
An \defi{oriented labeled graph} is a triple $\bigl(\vec\Gamma,X,l\bigr)$, where $\vec\Gamma=(V,\vec E)$ is an oriented graph, $X$ an alphabet (the set of labels) and $l\colon\vec E\to X$ a function (the labeling).
The \defi{underlying labeled graph} is $(\underline{\vec\Gamma},X,l')$ where $l'\colon E \to X\sqcup X^{-1}$ such that for every edge $e$ in $\vec E$ we have $l'(e)\defegal l(e)$ and $l'(\bar e)\defegal l(e)^{-1}$.
A morphism of (oriented) labeled graphs over the same alphabet which preserves the labeling is called a \defi{strong morphism}.
If we forget about the labeling and the morphism is only between (oriented) graphs, we say the this is a \defi{weak morphism}.
\end{definition2}
Typical examples of labeled graphs are Cayley graphs and more generally Schreier graphs, see Definition~\ref{DefCayley}.

Every concept that can be expressed using morphisms in the category of (oriented) graphs has an obvious ``strong'' analog in the category of (oriented) labeled graph with strong morphisms.
Thus, we have strong isomorphisms, strong coverings, a distance in the space of marked labeled graphs and hence a notion of strong convergence and of strong Benjamini-Schramm convergence.
%
%
%
%
\section{Tensor product of graphs}\label{sectionTensor}
\begin{definition2}
Let $\Gamma=(V,E)$ and $\Delta=(W,F)$ be two (oriented) graphs.
Their \defi{tensor product} is the (oriented) graph $\Gamma\otimes\Delta$, with vertex set $V\times W$, where there is an edge $(e,f)$ from $(v_1,w_1)$ to $(v_2,w_2)$ if $e$ is an edge from $v_1$ to $v_2$ in $\vec\Gamma$ and $f$ is an edge from $w_1$ to $w_2$ in $\vec\Delta$.
If $\Gamma=(V,E)$ and $\Delta=(W,F)$ are non-oriented graphs, then the inverse of the edge $(e,f)$ is the edge $(\bar e,\bar f)$.

If $\Gamma=(V,E)$ has labeling $l\colon E\to X$ and $\Delta=(W,F)$ has labeling $l'\colon F\to Y$, the tensor product has labeling $l\times l'\colon E\times F\to X\times Y$.
\end{definition2}
For two oriented graphs $\vec\Delta$ and $\vec\Gamma$ we have $\vec\Gamma\otimes\vec\Delta\iso\vec\Delta\otimes\vec\Gamma$ and $\vec\Delta\otimes\emptyset\iso\emptyset$, where $\emptyset$ denotes the empty graph. 

The tensor product of (oriented) graphs is the categorical product in the category of (oriented) graphs.
This implies that for any pair of morphisms $\phi\colon\vec\Gamma\to\vec\Delta$ and $\phi'\colon\vec\Gamma'\to\vec\Delta'$, $\phi\otimes\phi'$ is a morphism from $\vec\Gamma\otimes\vec\Gamma'$ to $\vec\Delta\otimes\vec\Delta'$ and that $\phi\otimes\phi'$ is an isomorphism if and only if $\phi$ and $\phi'$ are isomorphisms.
\begin{lemma2}\label{lemmaCovering}
For $i=1,2$, let $\phi_i\colon\Gamma_i\to\Delta_i$ be a covering.
Then, $\phi_1\otimes\phi_2\colon \Gamma_1\otimes\Gamma_2\to\Delta_1\otimes\Delta_2$ is a covering.
The same result is true for oriented graphs.
\end{lemma2}
\begin{proof}
Let $(v_1,v_2)$ be any vertex in $\Gamma_1\otimes\Gamma_2$.
Since the $\phi_i$'s are coverings, the induced morphisms $(\phi_i)_{v_i}\colon\Star_{v_i}\to\Star_{\phi_i(v_i)}$ are bijections.
On the other hand, by definition of the tensor product, there is a natural bijection between $\Star_{v_1}\times\Star_{v_2}$ and $\Star_{(v_1,v_2)}$.
Under this bijection, the map $(\phi_1\otimes\phi_2)_{(v_1,v_2)}$ corresponds to $(\phi_1)_{v_1}\times (\phi_2)_{v_2}$ and is therefore a bijection.
\end{proof}
\begin{definition2}\label{defLine}
Let $\vec\Gamma=(V,\vec E)$ be an oriented graph.
The \defi{line graph} of $\vec\Gamma$ is the oriented graph $L(\vec\Gamma)$ with vertex set $\vec E$ (the edge set of $\vec\Gamma$) and with an edge from $e$ to $f$ if we have $\tau(e)=\iota(f)$ (that is $f$ ``directly follows'' $e$) in~$\vec\Gamma$ .
\end{definition2}

\begin{lemma2}\label{PropLineTensor}
For $i=1,2$, let $\vec\Gamma_i=(V_i,\vec E_i)$ be an oriented graph.
Then the graphs $L(\vec\Gamma_1)\otimes L(\vec \Gamma_2)$ and $L(\vec\Gamma_1\otimes \vec \Gamma_2)$ are isomorphic.
\end{lemma2}
\begin{proof}
Vertices of $L(\vec\Gamma_1\otimes \vec \Gamma_2)$ are in $1$-to-$1$ correspondance with edges of $\vec\Gamma_1\otimes \vec \Gamma_2$ and therefore in $1$-to-$1$ correspondance with pairs of edges in $\vec E_1\times \vec E_2$.
On the other hand, vertices of $L(\vec\Gamma_1)\otimes L(\vec \Gamma_2)$ are in $1$-to-$1$ correspondance with $\{(v_1,v_2)\, |\, v_i \textnormal{ a vertex in } L(\vec\Gamma_i)\}$.
Therefore, vertices of $L(\vec\Gamma_1)\otimes L(\vec \Gamma_2)$ are also in $1$-to-$1$ correspondance with pairs of edges in $\vec E_1\times \vec E_2$.

Now, in $L(\vec\Gamma_1\otimes \vec \Gamma_2)$ there is an edge from $(e_1,e_2)$ to $(f_1,f_2)$ if and only if, for $i=1,2$, $f_i$ directly follows $e_i$ in $\Gamma_i$.
The same relation holds in $L(\vec\Gamma_1)\otimes L(\vec \Gamma_2)$, which proves the isomorphism.
\end{proof}

%
%
%
\subsection{Tensor product and convergence}\label{SubSTensorConv}
Recall that for a marked labeled graph $(\vec\Gamma, v)$, we denote by $(\vec\Gamma, v)^0$ the connected component of $\underline{\vec\Gamma}$ containing the root, with the orientation coming from $\vec\Gamma$.
\begin{theorem}\label{ThmLimit}
If $(\vec\Gamma_n,v_n)$ converges (in $\vec{\mathcal G}_*$) to $(\vec\Gamma,v)$ and $(\vec\Theta_m,y_m)$ converges to $(\vec\Theta,y)$ then the following diagram is commutative
\center
\includegraphics{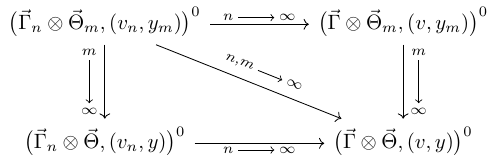}
\end{theorem}
\begin{proof}
Take any $\epsilon>0$.
By convergence, there exists $n_0$ and $m_0$ such that for every $n\geq n_0$ the graphs $(\vec\Gamma_n,v_n)$ and $(\vec\Gamma,v)$ are at distance lesser than $\epsilon$ and such that for every $m\geq m_0$ the graphs $(\vec\Theta_m,v_m)$ and $(\vec\Theta,v)$ are too at distance lesser than $\epsilon$.

Let $(\vec\Delta_1,v)$, $(\vec\Delta_2,w)$, $(\vec\Pi_1,x)$ and $(\vec\Pi_2,y)$ be four elements of $\vec{\mathcal G}_*$.
We affirm that the distance between 
$\bigl(\vec\Delta_1\otimes\vec\Pi_1,(v,x)\bigr)^0$ and $\bigl(\vec\Delta_2\otimes\vec\Pi_2,(w,y)\bigr)^0$ is lesser or equal to the maximum of $d\bigl((\vec\Delta_1,v),(\vec\Delta_2,w)\bigr)$ and $d\bigl((\vec\Pi_1,x),(\vec\Pi_2,y)\bigr)$.
Lemma~\ref{lemmaPath} below implies in turn that $\bigl(\vec\Gamma_n\otimes\vec\Theta_m,(v_n,y_m)\bigr)^0$ and $\bigl(\vec\Gamma\otimes\vec\Theta,(v,y)\bigr)^0$ are at distance less than $\epsilon$, which proves the convergence when both $n$ and $m$ grow together.

Now, if we take first the limit on $n$ we can use this result with $\vec\Theta_m$ constant to find
\[\lim_{n\to \infty}\left(\bigl(\vec\Gamma_n\otimes\vec\Theta_m,(v_n,y_m)\bigr)^0\right)=\bigl(\vec\Gamma\otimes\vec\Theta_m,(v,y_m)\bigr)^0.\]
Taking then the limit on $m$ (with $\vec\Gamma$ constant) we have that the upper right triangle is commutative.
A similar argument proves the commutativity of the downer left triangle.
\end{proof}
Note that Theorem~\ref{ThmLimit} holds also for non-oriented marked graphs as well as for labeled marked graphs with strong morphisms.

We will now prove the technical result used in the proof of Theorem~\ref{ThmLimit}.
\begin{lemma}\label{lemmaPath}
Let $\vec\Gamma$ and $\vec\Delta$ be two oriented graphs and $p$ be a path in $\underline{\vec\Gamma\otimes\vec\Delta}$ from $(x,v)$ to $(y,w)$.
Then there exists paths $q$ in $\underline{\vec\Gamma}$ from $x$ to $y$ and $r$ in $\underline{\vec\Delta}$ from $v$ to $w$ with same signature as $p$.

More precisely, given a non-negative integer $n$ and a sequence $\sigma$ of $\pm 1$ of length $n$, there is a bijection between the set of paths $p$ from $(x,v)$ to $(y,w)$ in $\underline{\vec\Gamma\otimes\vec\Delta}$ of signature $\sigma$ and the set of couples $(q,r)$ where $q$ is a path in $\underline{\vec\Gamma}$ from $x$ to $y$ and $r$ a path in $\underline{\vec\Delta}$ from $v$ to $w$, both of signature $\sigma$.
\end{lemma}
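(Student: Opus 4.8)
The plan is to exhibit the bijection explicitly, by projecting each edge of a path in $\underline{\vec\Gamma\otimes\vec\Delta}$ onto its two coordinates and, conversely, recombining a matching pair of edges coordinatewise. First I would record the precise shape of the edges of the underlying tensor product: by definition every oriented edge of $\vec\Gamma\otimes\vec\Delta$ is a pair $(e,f)$ with $e\in\vec E_1$ and $f\in\vec E_2$, and the inverse of such an edge is $(\bar e,\bar f)$. Hence each edge $g$ of $\underline{\vec\Gamma\otimes\vec\Delta}$ has exactly one of the two forms $(e,f)$ (lying in the orientation coming from $\vec\Gamma\otimes\vec\Delta$, hence of signature $+1$) or $(\bar e,\bar f)$ (of signature $-1$). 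In both cases I set $\pi_1(g)$ to be the first coordinate ($e$ or $\bar e$) and $\pi_2(g)$ the second. The key observation is that the signature of $g$ with respect to the orientation of $\vec\Gamma\otimes\vec\Delta$ equals the signature of $\pi_1(g)$ with respect to the orientation of $\vec\Gamma$ and also equals that of $\pi_2(g)$: all three are simultaneously $+1$ or simultaneously $-1$, precisely because $\underline{\vec\Gamma\otimes\vec\Delta}$ contains no ``mixed'' edge of the form $(e,\bar f)$.

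Next, given a path $p=(g_1,\dots,g_n)$ from $(x,v)$ to $(y,w)$, I would define $q\defegal(\pi_1(g_1),\dots,\pi_1(g_n))$ and $r\defegal(\pi_2(g_1),\dots,\pi_2(g_n))$. Since an edge $(e,f)$ of the tensor product runs from $(\iota(e),\iota(f))$ to $(\tau(e),\tau(f))$, applying $\iota$ and $\tau$ coordinatewise shows that $\tau(\pi_1(g_i))=\iota(\pi_1(g_{i+1}))$ whenever $\tau(g_i)=\iota(g_{i+1})$, so $q$ is a genuine path in $\underline{\vec\Gamma}$ from $x$ to $y$, and likewise $r$ is a path in $\underline{\vec\Delta}$ from $v$ to $w$. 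By the signature observation above, $p$, $q$ and $r$ all share the same signature $\sigma$, which already establishes the first assertion.

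For the bijection I would construct the inverse map. Starting from a couple $(q,r)$ with $q=(q_1,\dots,q_n)$ from $x$ to $y$ and $r=(r_1,\dots,r_n)$ from $v$ to $w$, both of signature $\sigma$, I build $p$ coordinatewise: at position $i$, if $\sigma_i=+1$ then $q_i\in\vec E_1$ and $r_i\in\vec E_2$ and I set $g_i\defegal(q_i,r_i)$, whereas if $\sigma_i=-1$ then $q_i=\bar e$ and $r_i=\bar f$ for unique $e\in\vec E_1$, $f\in\vec E_2$ and I set $g_i\defegal\overline{(e,f)}=(\bar e,\bar f)$. Here the hypothesis that $q$ and $r$ have the \emph{same} signature is exactly what guarantees that $q_i$ and $r_i$ point in the same direction, so that $g_i$ is a legitimate edge of $\underline{\vec\Gamma\otimes\vec\Delta}$; this is the one place where the argument would break down for an arbitrary pair $(q,r)$, and it is the crux of the statement. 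The composability of the $g_i$ follows once more by reading the endpoint conditions coordinatewise, so $p$ is a path from $(x,v)$ to $(y,w)$ of signature $\sigma$.

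Finally I would verify that the two constructions are mutually inverse, which is immediate position by position: $\pi_1$ and $\pi_2$ merely read off the coordinates that the recombination map glued together, and the recombination recovers exactly the edge that was split. This yields the claimed bijection, with the length-$0$ case (which forces $x=y$, $v=w$ and $\sigma$ empty) handled trivially.
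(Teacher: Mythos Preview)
Your proof is correct and rests on the same observation as the paper's: an edge of $\underline{\vec\Gamma\otimes\vec\Delta}$ is exactly a pair of edges of $\underline{\vec\Gamma}$ and $\underline{\vec\Delta}$ carrying the same sign, so that paths with a prescribed signature factor coordinatewise. The only difference is one of presentation: the paper packages this as an induction on the length of the path (treating $n=0$ and $n=1$ as base cases and concatenating for larger $n$), whereas you give the bijection in one stroke via the projections $\pi_1,\pi_2$ and their inverse. Your version is slightly more explicit and makes clearer where the equal-signature hypothesis is used; the paper's induction is terser but amounts to the same edge-by-edge check.
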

\begin{proof}
It is obvious that the second statement implies the first one.
%
By definition of the tensor product, we have a 
function $\phi$ from the set of paths from $(x,v)$ to $(y,w)$ to the set of couples $(q,r)$ where $q$ is a path in $\underline{\vec\Gamma}$ from $x$ to $y$ and $r$ a path in $\underline{\vec\Delta}$ from $v$ to $w$.
Indeed, $\phi$ is the product of the left projection and the right projection.
This function naturally preserves the signature and is injective.
Now, if $q=(e_1,\dots,e_n)$ and $r=(e'_1,\dots,e'_n)$ have the same signature $\sigma$ then either $e_1$ belongs to $\vec\Gamma$ and $e'_1$ belongs to $\vec\Delta$, in which case we have an edge $(e_1,e_1')$ in $\vec\Gamma\otimes\vec\Delta$, or $\bar e_1$ belongs to $\vec\Gamma$ and $\bar e'_1$ belongs to $\vec\Delta$, in which case we have an edge $(\bar e_1,\bar e_1')$ in $\vec\Gamma\otimes\vec\Delta$.
By induction, it is possible to construct a path $p$ in $\underline{\vec\Gamma\otimes\vec\Delta}$ from $(x,v)$ to $(y,w)$ with signature $\sigma$.
\end{proof}
%
%
%
%
%
\subsection{Tensor product with an oriented cycle and the oriented line}
Let us first consider the special case when one of the factors in the tensor product is $\vec C_\infty$ or $\vec C_M$,
where $\vec C_\infty$ is the ``oriented line'' with $V_{\vec C_\infty}=\Z$ (the set of integers) and for each vertex $i$ there is a unique oriented edge from $i$ to $i+1$, and $\vec C_M$ is the ``oriented cycle of length $M$'': $V_{\vec C_M}=\Z/M\Z$ and for each $i$ there is a unique oriented edge from $i$ to $i+1$ modulo $M$.
Below, we will write $M\in\barN=\{1,2,\dots,\infty\}$ and $i\equiv j\pmod\infty$ will mean $i=j$.

In this subsection we will only consider oriented connected graphs $\vec\Gamma$.
Recall the notion of derangement of a path from Definition~\ref{defDerangement} that we will need here.
\begin{proposition}\label{PropConnCompIso}
For any oriented connected graph $\vec\Gamma$ and any $M\in\barN$, all connected components of $\vec\Gamma\otimes\vec C_M$ are isomorphic.
\end{proposition}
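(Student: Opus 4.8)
The plan is to exhibit a single graph automorphism of $\vec\Gamma\otimes\vec C_M$ that permutes the connected components transitively, so that any two of them are isomorphic. Define the shift $s\colon(v,i)\mapsto(v,i+1)$ (indices taken modulo $M$, or in $\Z$ when $M=\infty$); this is $\Id_{\vec\Gamma}\otimes\rho$, where $\rho$ is the rotation of $\vec C_M$ sending $i$ to $i+1$. Since an edge of the product from $(v_1,i)$ to $(v_2,i+1)$ comes from an edge $v_1\to v_2$ in $\vec\Gamma$ together with the edge $i\to i+1$ of the cycle, applying $s$ produces the edge from $(v_1,i+1)$ to $(v_2,i+2)$, which is again an edge of the product; hence $s$ is an automorphism, with inverse $(v,i)\mapsto(v,i-1)$. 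In particular $s^k$ maps the connected component of $(v,i)$ isomorphically onto the connected component of $(v,i+k)$.

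First I would pin down which pairs of vertices lie in the same component. Using Lemma \ref{lemmaPath}, a path in $\underline{\vec\Gamma\otimes\vec C_M}$ from $(v,i)$ to $(w,j)$ of signature $\sigma$ corresponds to a pair $(q,r)$ of paths of signature $\sigma$, with $q$ from $v$ to $w$ in $\underline{\vec\Gamma}$ and $r$ from $i$ to $j$ in $\underline{\vec C_M}$. The point is that the cycle is \emph{deterministic} in both directions: from any vertex $i$ and any signature $\sigma$ there is exactly one path $r$, and it ends at $i+\der(\sigma)\pmod M$. Consequently $(v,i)$ and $(w,j)$ are connected if and only if there is a path $q$ from $v$ to $w$ in $\underline{\vec\Gamma}$ with $\der(q)\equiv j-i\pmod M$.

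With this description the claim follows by an orbit argument. Fix a base vertex $v_0$ and let $C_0$ be the component of $(v_0,0)$. Given an arbitrary vertex $(w,j)$, connectivity of $\vec\Gamma$ provides a path $q$ from $v_0$ to $w$ in $\underline{\vec\Gamma}$; writing $d=\der(q)$, the criterion above shows that $(w,d\bmod M)$ lies in $C_0$, whence $(w,j)=s^{\,j-d}(w,d)$ lies in $s^{\,j-d}(C_0)$. Thus the component of every vertex is of the form $s^k(C_0)$, and since each $s^k$ is an automorphism, every connected component is isomorphic to $C_0$.

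The main obstacle is the connectivity criterion: one must verify that lifting a path of $\underline{\vec\Gamma}$ to the product, with prescribed starting level in the cycle, is always possible and tracks the level by the derangement. This is exactly where the special structure of $\vec C_M$ (a unique out-edge and a unique in-edge at each vertex) is used, via the signature bijection of Lemma \ref{lemmaPath}; once this is in hand, the automorphism $s$ and the orbit argument dispose of the statement uniformly in $M\in\barN$, including the line case $M=\infty$.
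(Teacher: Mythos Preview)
Your proof is correct and follows essentially the same approach as the paper: both arguments show that every connected component contains a vertex of the form $(v_0,i)$ for a fixed $v_0$ (using connectedness of $\vec\Gamma$ and the determinism of $\vec C_M$ with respect to signature), and then use the shift automorphism $\Id\otimes\rho$ of the tensor product to identify the components containing different $(v_0,i)$. The paper's version is slightly leaner in that it only uses the one direction of the connectivity criterion that is needed, and constructs the lifted path directly rather than invoking Lemma~\ref{lemmaPath}, but the substance is identical.
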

\begin{proof}
Fix a vertex $v$ of $\vec\Gamma$.
Since $\vec\Gamma$ is connected, for any vertex $w$ there is a path $q$ from $v$ to $w$ in $\underline{\vec\Gamma}$, with signature $\sigma(q)$.
For any integer $i$, there exists a path $r$ from $i$ to $i+\der(q) \pmod M$ in $\underline{\vec C_M}$ with signature $\sigma(q)=\sigma(r)$.
Therefore, there is a path $p$ in $\underline{\vec\Gamma\otimes\vec C_M}$ from $(v,i)$ to $\bigl(w,i+\der(q)\bigr)$.
Hence, for any vertex $(w,j)$ in $\underline{\vec\Gamma\otimes\vec C_M}$, there exists an integer $i$ such that $(w,j)$ is in the connected component of $(v,i)$.

On the other hand, since for any integers $i$ and $j$, the marked graphs $(\vec C_M,i)$ and $(\vec C_M,j)$ are isomorphic, say by an isomorphism $\phi_{i,j}$, we have connected components $\bigl(\vec\Gamma\otimes\vec C_M,(v,i)\bigr)^0$ and $\bigl(\vec\Gamma\otimes\vec C_M,(v,j)\bigr)^0$ are isomorphic by $\Id\otimes\phi_{i,j}$.
This implies that all connected components are isomorphic.
\end{proof}
\begin{theorem}\label{thmIsomorphism}
Let $\vec\Gamma$ be a connected locally finite oriented graph.
For any $M\in\barN$ and any vertex $v$ in  $\vec\Gamma$, the marked oriented graph $(\vec\Gamma,v)$ is isomorphic (as marked oriented graph) to $\bigl(\vec\Gamma\otimes\vec C_M,(v,0)\bigr)^0$ if and only if $\der(\vec\Gamma)\equiv0 \pmod M$.
\end{theorem}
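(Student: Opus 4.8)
The plan is to study the first coordinate projection $\pi\colon\vec\Gamma\otimes\vec C_M\to\vec\Gamma$, $(w,j)\mapsto w$, and its restriction to the marked component $\vec\Xi\defegal\bigl(\vec\Gamma\otimes\vec C_M,(v,0)\bigr)^0$. Since $\vec C_M$ has exactly one outgoing edge at each vertex, the induced map $\Star_{(w,j)}\to\Star_w$ is a bijection, so $\pi$ is a covering, and restricting to the connected component $\vec\Xi$ (which is closed under taking stars) keeps it a covering onto its image. First I would pin down which vertices lie in $\vec\Xi$ using Lemma \ref{lemmaPath}: a path from $(v,0)$ to $(w,j)$ in $\underline{\vec\Gamma\otimes\vec C_M}$ corresponds to a pair $(q,r)$ of equal signature, with $q$ a path from $v$ to $w$ in $\underline{\vec\Gamma}$ and $r$ a path from $0$ to $j$ in $\underline{\vec C_M}$. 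As a path of $\underline{\vec C_M}$ starting at $0$ is determined by its signature and ends at $0$ plus its derangement modulo $M$, and $\der(r)=\der(q)$, the vertex $(w,j)$ lies in $\vec\Xi$ if and only if there is a path $q$ from $v$ to $w$ in $\underline{\vec\Gamma}$ with $\der(q)\equiv j\pmod M$. In particular $\pi|_{\vec\Xi}$ is onto by connectedness of $\vec\Gamma$, so it is a covering of $\vec\Gamma$.

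The second ingredient is the group of derangements. Writing $D\defegal\setst{\der(p)}{p\text{ a closed path at }v}$, additivity of $\der$ under concatenation and its sign change under reversal show that $D$ is a subgroup of $\Z$; conjugating by a path from $v$ to $w$ shows, by connectedness, that $D$ is independent of $v$ and equals the set of all derangements of closed paths in $\vec\Gamma$. Hence $D=g\Z$ where $g\defegal\der(\vec\Gamma)$ is its nonnegative generator.

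For the ``if'' direction, assume $g\equiv0\pmod M$, i.e. $D\subseteq M\Z$. Then for each $w$ the value $h(w)\defegal\der(q)\bmod M$ is independent of the chosen path $q$ from $v$ to $w$, since two such paths differ by a closed path at $v$ whose derangement lies in $D\subseteq M\Z$. I would then check that $\psi\colon w\mapsto\bigl(w,h(w)\bigr)$ is a morphism $\vec\Gamma\to\vec\Gamma\otimes\vec C_M$ with image in $\vec\Xi$ and $\pi\circ\psi=\Id_{\vec\Gamma}$: an edge from $w$ to $w'$ raises the derangement by $1$, so it is sent to the tensor edge built from it and the unique edge $h(w)\to h(w)+1$ of $\vec C_M$. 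By the vertex description above $\psi$ is onto $\vec\Xi$, and being a section of the covering $\pi|_{\vec\Xi}$ it is bijective on vertices and stars, hence a marked isomorphism $(\vec\Gamma,v)\iso\vec\Xi$.

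For the ``only if'' direction I would use that the derangement is an isomorphism invariant of oriented graphs, since an oriented isomorphism preserves orientations and therefore signatures and derangements of paths. By Lemma \ref{lemmaPath} a closed path at $(v,0)$ in $\vec\Xi$ projects to a closed path $q$ at $v$ and to a closed path at $0$ in $\vec C_M$ of the same signature; the latter forces $\der(q)\equiv0\pmod M$, while conversely every such $q$ lifts. Thus the derangements of closed paths at $(v,0)$ fill exactly $D\cap M\Z=g\Z\cap M\Z$, giving $\der(\vec\Xi)=\lcm(g,M)$, interpreted as $0$ when this intersection is trivial (e.g. when $M=\infty$ and $g>0$). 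Hence if $g\not\equiv0\pmod M$ then $\der(\vec\Xi)>g=\der(\vec\Gamma)$, or else $\der(\vec\Xi)=0\neq g$, so $\vec\Xi\not\iso\vec\Gamma$. The main obstacle is precisely this direction: for infinite $\vec\Gamma$ the covering $\pi|_{\vec\Xi}$ may have several sheets yet still match the vertex cardinality of $\vec\Gamma$, so a naive counting argument fails and one genuinely needs a finer invariant such as the derangement.
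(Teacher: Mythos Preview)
Your proof is correct. The ``if'' direction is essentially the paper's argument: both construct the rank map $w\mapsto\der(\text{path from }v\text{ to }w)\bmod M$ and the resulting morphism $w\mapsto\bigl(w,h(w)\bigr)$; you add the observation that this is a section of the covering $\pi|_{\vec\Xi}$, which is a clean way to see it is an isomorphism.

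The ``only if'' direction, however, is genuinely different from the paper's. The paper fixes a closed path $q_0$ at $v$ of length $n$ with $\der(q_0)\not\equiv 0\pmod M$, and then compares the \emph{number} of closed paths of length $n$ based at $v$ in $\underline{\vec\Gamma}$ with the number based at $(v,0)$ in $\underline{\vec\Xi}$: by Lemma~\ref{lemmaPath} the latter is strictly smaller (the path $q_0$ does not lift to a closed path), and local finiteness ensures these counts are finite, hence distinguishable. Your argument instead uses $\der$ itself as an isomorphism invariant: you compute that the subgroup of derangements of closed paths in $\vec\Xi$ is $g\Z\cap M\Z$, so $\der(\vec\Xi)=\lcm(g,M)$ (or $0$ when $M=\infty$), which differs from $g=\der(\vec\Gamma)$ precisely when $M\nmid g$. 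This bypasses the counting entirely and, notably, does not use local finiteness at all---so your argument in fact proves the theorem without that hypothesis, whereas the paper explicitly needs it for its direction. The paper's approach has the minor advantage of being more concrete (one exhibits a specific length at which the graphs differ), but yours is both shorter and more general.
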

\begin{proof}
Suppose that $\der(\vec\Gamma)\equiv 0\pmod M$.
For any vertex $w$ of $\vec\Gamma$ define $\rank(w)$, the \defi{rank} of $w$, to be the derangement of any path in $\underline{\vec\Gamma}$ from $v$ to $w$ taken modulo $M$.
This is well defined since for two such paths $p$ and $q$, the concatenated path $p\bar q$ is a closed path based at $v$ with derangement $0\pmod M$.
We define a morphism from $(\vec\Gamma,v)$ to $\bigl(\vec\Gamma\otimes\vec C_M,(v,0)\bigr)^0$ by $w\mapsto\bigl(w,\rank(w)\bigr)$ for vertices.
For the edges, it maps an edge $e$ from $w$ to $x$ to an edge from $\bigl(w,\rank(w)\bigr)$ to $\bigl(x,\rank(x)\bigr)$.
Note that the vertices $\bigl(w,\rank(w)\bigr)$ and $\bigl(x,\rank(x)\bigr)$ are indeed connected by an edge in the tensor product since $\rank(x)=\rank(w)+1$.
It is easy to see that this morphism is surjective and injective, and hence is an isomorphism.

Suppose now that $\der(\vec\Gamma)\not\equiv 0 \pmod M$.
This implies the existence of a closed path $q_0$ in $\underline{\vec\Gamma}$ from $v$ to $v$ with non-zero $\pmod M$ derangement and length $n$.
By the second part of Lemma~\ref{lemmaPath}, the set of closed paths $q$ based at $v$ and of length $n$ is in bijection with the set of (non necessarily closed) paths $p$ from $(v,0)$ to $\bigl(v,\der(q)\bigr)$, where we used the fact that for every signature $\sigma$, there is a unique path in $C_M$ with initial vertex $0$ and signature $\sigma$.
Hence, the number of closed paths in $\underline{\vec\Gamma\otimes\vec C_M}$ of length $n$ based at $(v,0)$ is at most the number of closed path of length $n$ based at $v$, minus one (namely the path $q_0$).
If $\vec\Gamma$ is locally finite (note that local finiteness of $\vec\Gamma$ is used only in this direction of the proof), there is only a finite number of such paths.
In this case, $(\vec\Gamma,v)$ and $\bigl(\vec\Gamma\otimes\vec C_M,(v,0)\bigr)^0$ cannot be isomorphic (as oriented marked graphs).
\end{proof}
\begin{remark}\label{LabelingOfCM}
Proposition~\ref{PropConnCompIso} and Theorem~\ref{thmIsomorphism} (and their proofs) are still true  in the category of labeled oriented graphs (with strong morphisms) if we identify the labeling $(l\times l')$ of the tensor product with its first coordinate $l$, which is the labeling of $\vec\Gamma$. In the following, we will always use this identification for tensor product of the form $\vec\Gamma\otimes\vec C_M$.
\end{remark}
We know by Proposition~\ref{PropConnCompIso} and Theorem~\ref{thmIsomorphism} that all connected components of $\vec\Gamma\otimes\vec C_M$ are isomorphic and we are able, in the locally finite case, to decide when they are isomorphic (as marked graphs) to $\vec\Gamma$.
To complete the description of $\vec\Gamma\otimes\vec C_M$ it remains to count the number of connected components.
This is the subject of the next proposition.
\begin{proposition}\label{PropNbrComponent}
For any connected oriented graph $\vec\Gamma$, and any $M\in\N$ and any $i\in\Z$, let $[i]$ denotes the unique representative of $i$ modulo $M$ such that $-M/2<[i]\leq M/2$.
For $M=\infty$, we define $[i]\defegal i$.
For any connected graph $\vec\Gamma$, the number of connected components of $\vec\Gamma\otimes\vec C_M$ is $M$ if and only if $\der(\vec\Gamma)\equiv 0\pmod M$.
Otherwise it is equal to the absolute value of $[\der(\vec\Gamma)]$.

In particular, the number of connected component of $\vec\Gamma\otimes\vec \Z$ is infinite if $\der(\vec\Gamma)=0$ and $\der(\vec\Gamma)$ otherwise.
\end{proposition}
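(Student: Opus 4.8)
The plan is to fix a base vertex $v$ of $\vec\Gamma$ and to identify the connected components of $\vec\Gamma\otimes\vec C_M$ with the cosets of an explicit cyclic subgroup of $\Z/M\Z$. By Proposition \ref{PropConnCompIso} all components are isomorphic, so it is enough to count how many of them meet the fibre $\setst{(v,j)}{j\in\Z/M\Z}$. Since $\vec\Gamma$ is connected, \emph{every} component meets this fibre: given any vertex $(w,j)$, a path from $w$ to $v$ in $\underline{\vec\Gamma}$ lifts, via Lemma \ref{lemmaPath}, to a path in $\underline{\vec\Gamma\otimes\vec C_M}$ from $(w,j)$ to some $(v,i)$. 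Hence the number of components equals the number of classes of the relation $\sim$ on $\Z/M\Z$ defined by $i\sim j$ whenever $(v,i)$ and $(v,j)$ lie in the same component.

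The heart of the argument is to describe $\sim$ explicitly, and this is exactly where Lemma \ref{lemmaPath} does the work. A path in $\underline{\vec\Gamma\otimes\vec C_M}$ from $(v,i)$ to $(v,j)$ corresponds to a pair $(q,r)$ of equal signature, with $q$ a closed path at $v$ in $\underline{\vec\Gamma}$ and $r$ a path from $i$ to $j$ in $\underline{\vec C_M}$. In $\vec C_M$ a starting vertex together with a signature determines the path uniquely, so such an $r$ exists precisely when $j\equiv i+\der(q)\pmod M$. Therefore $i\sim j$ if and only if $j-i\equiv\der(q)\pmod M$ for some closed path $q$ based at $v$.

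I would then observe that the set $D\defegal\setst{\der(q)\bmod M}{q\text{ a closed path at }v}$ is a subgroup of $\Z/M\Z$: it contains $0$ (empty path), is closed under addition by concatenation of closed paths, and closed under negation since $\der(\bar q)=-\der(q)$. The same three facts show that the integer derangements $\setst{\der(q)}{q\text{ closed at }v}$ form a subgroup of $\Z$, hence equal $d\Z$ for some $d\geq 0$; by Definition \ref{defDerangement} the least positive element of this subgroup is exactly $\der(\vec\Gamma)$, so $d=\der(\vec\Gamma)$ (and $d=0$ precisely when $\der(\vec\Gamma)=0$). Consequently $D$ is the cyclic subgroup of $\Z/M\Z$ generated by $\der(\vec\Gamma)\bmod M$, and $\sim$ is translation by $D$, so the number of components is the index $[\Z/M\Z:D]$.

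The count is then routine group theory. The index is $M$ exactly when $D$ is trivial, i.e. when $\der(\vec\Gamma)\equiv 0\pmod M$, which settles the first assertion (including the case $\der(\vec\Gamma)=0$); otherwise it equals $\gcd(\der(\vec\Gamma),M)$, the index of the subgroup generated by $\der(\vec\Gamma)$. For $M=\infty$ one replaces $\Z/M\Z$ by $\Z$ and $D$ by $\der(\vec\Gamma)\Z$, whose index is $\der(\vec\Gamma)$ when $\der(\vec\Gamma)>0$ and infinite when $\der(\vec\Gamma)=0$, giving the ``in particular''. The only genuinely nontrivial step is the translation in the second paragraph; once Lemma \ref{lemmaPath} and the uniqueness of signed paths in $\vec C_M$ are in hand, the subgroup structure and the counting follow immediately. (In the de Bruijn case of interest the presence of a loop forces $\der(\vec\Gamma)=1$, so the graph $\vec\Gamma\otimes\vec C_M$ is connected.)
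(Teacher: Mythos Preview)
Your approach mirrors the paper's: both fix a base vertex $v$, observe (via Lemma~\ref{lemmaPath}) that every component meets the fibre $\{v\}\times\Z/M\Z$, and then count equivalence classes on that fibre. Your argument is the cleaner of the two: recognising that the derangements of closed loops at $v$ form the subgroup $\der(\vec\Gamma)\,\Z$ of $\Z$ immediately identifies $D\subseteq\Z/M\Z$ and reduces the count to an index computation, whereas the paper works more by hand with a minimal index $i_0$.

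Note, however, that the formula you obtain, $\gcd\bigl(\der(\vec\Gamma),M\bigr)$, is \emph{not} the formula stated in the proposition, namely $\abs{[\der(\vec\Gamma)]}$. Yours is the correct one. For a counterexample take $\vec\Gamma=\vec C_2$, so that $\der(\vec\Gamma)=2$, and $M=5$: then $\vec C_2\otimes\vec C_5\simeq\vec C_{10}$ has a single component, matching $\gcd(2,5)=1$ but not $\abs{[2]}=2$. The paper's proof of the direction $i_0\geq\abs{[\der(\vec\Gamma)]}$ is flawed: the asserted inequality $\der(\vec\Gamma)\geq\abs{\der(q)}$ points the wrong way (by definition $\der(\vec\Gamma)$ is a minimum), and the argument does not recover from this. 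The two formulas happen to agree in every application made later in the paper, where $\der(\vec\Gamma)=1$, so nothing downstream is affected; but as stated the proposition needs $\abs{[\der(\vec\Gamma)]}$ replaced by $\gcd\bigl(\der(\vec\Gamma),M\bigr)$, which is exactly what your argument yields.
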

\begin{proof}
Choose a vertex $v_0$ in $\vec\Gamma$.
For every vertex $w$ of $\vec\Gamma$ there is a path $q$ in $\underline{\vec\Gamma}$ from $w$ to $v_0$, of length $n$ and signature $\sigma$.
For any $i$ there is obviously a path $r$ in $\underline{\vec C_M}$ of length $n$ and signature $\sigma$ with initial vertex $[i]$ and end vertex $\bigl[i+\der(r)\bigr]$.
Hence, for every vertex $(w,[i])$ of the tensor product, there is a path $p$ from $(w,[i])$ to $\bigl(v_0,[i+\der(r)\bigr)]$ in $\underline{\vec\Gamma\otimes\vec C_M}$.
Therefore, to count the number of connected components of $\vec\Gamma\otimes\vec C_M$ it is sufficient to know when two vertices $(v_0,[i])$ and $(v_0,[k])$ are connected.
But they are connected if and only if $(v_0,[0])$ and $(v_0,[k-i])$ are connected.

Let $i_0$ be the non-zero integer with the smallest absolute value such that $(v_0,0)$ and $(v_0,[i_0])$ are connected by a path in $\underline{\vec\Gamma\otimes\vec C_M}$. If such an integer does not exist, put $i_0=M$.
The previous discussion implies that $i_0=M$ if and only if the number of connected components of $\vec\Gamma\otimes\vec C_M$ is $M$.
On the other hand, $i_0=M$ if and only if every path $p$ in $\underline{\vec\Gamma\otimes\vec C_M}$ with initial vertex $(v_0,0)$ and end vertex $(v_0,j)$ satisfies $j=0$, in which case $\der(p)\equiv 0 \pmod M$.
But this is equivalent (by Lemma~\ref{lemmaPath} and by the existence in $\underline{\vec C_M}$ of a path with arbitrary signature) to every closed path $q$ in $\underline{\vec\Gamma}$ with initial vertex $v_0$ having $\der(q)\equiv 0\pmod M$,
which is  equivalent to $\der(\vec\Gamma)\equiv0\pmod M$.

If $i_0\neq M$, we have either $M=\infty$ or $M/2<i_0\leq M/2$.
In both cases $i_0=[i_0]$.
For every integer $j$, since $(v_0,0)$ and $(v_0,i_0)$ are connected, their images $(v_0,[j])$ and $(v_0,\bigl[[j]+i_0\bigr])$ by the automorphism $\Id\otimes\phi_{0,[j]}$ are connected, where $\phi_{0,[j]}$ is the automorphism of $\vec C_M$ sending $i$ on $i+[j]$.
Hence the vertices $(v_0,\bigl[[j]-i_0\bigr])$ and $(v_0,\bigl[[j]-i_0+i_0\bigr])$ are also connected.
As a special case we have that $(v_0,0)$ and $(v_0,[-i_0])$ are connected.
Therefore we can suppose that $i_0$ is strictly positive and $0<i_0\leq M/2$.
We also have by induction that for all $j$, $(v_0,[j])$ is connected to $(v_0,k)$ for some $0\leq k\leq i_0$.
On the other hand, $(v_0,0)$, $(v_0,1)$, \dots{} and $(v_0,i_0-1)$ are in different connected components by minimality of $i_0$.
Hence, the number of connected components of $\vec\Gamma\otimes\vec C_M$ is $i_0$.

Let us now show that $i_0$ is equal to the absolute value of $[\der(\vec\Gamma)]$.
Take a path $q$ in $\underline{\vec\Gamma}$ with initial vertex $v_0$ and such that $\abs{\der(q)}=\der(\vec\Gamma)$.
By Lemma~\ref{lemmaPath} this gives a path $p$ in $\underline{\vec\Gamma\otimes \vec C_M}$ with $\der(p)=\der(\vec\Gamma)$, initial vertex $(v_0,0)$ and end vertex $(v_0,[\der(p)])$.
This implies (by minimality of $i_0$) that the absolute value of $[\der(\vec\Gamma)]$ is bigger or equal to $i_0$, which is the number of connected components of $\vec\Gamma$.

It remains to show that $i_0$ is bigger or equal to the absolute value of $[\der(\vec\Gamma)]$.
Now, if $(v_0,0)$ is connected by $p$ to $(v_0,i_0)$, the derangement of $p$ is equal to $i_0$ modulo $M$.
This gives us a closed path $q$ (from $v_0$ to $v_0$) in $\underline{\vec\Gamma}$ with derangement $i_0+aM$ for some integer $a$.
Since $[i_0+aM]=[i_0]=i_0$, we have found a path $q$ in $\vec\Gamma$ such that $[\der(q)]=i_0$.
On the other hand, we have $\der(\vec\Gamma)\geq\abs{\der(q)}$.
We still have to show that $\abs{\der(q)}\geq[\der(q)]$.
But the stronger inequality $\abs i\geq \abs{[i]}$ is true for every integer $i$.
Indeed, if $-M/2<i\leq M/2$ we have $i=[i]$ and therefore $\abs i= \abs{[i]}$. Otherwise, $\abs i >M/2\geq\abs{[i]}$.
\end{proof}
An analogous proposition holds for non-oriented graphs, where the derangement is replaced by the length of a path and minimum is replaced by greatest common divisor.
This gives a refinement of the following proposition: $\Gamma\otimes\Delta$ is connected if and only if $\Gamma$ and $\Delta$ are connected and at least one factor is non-bipartite (\cite{MR1788124}, Theorem 5.29).
%
%
%
%
%
\subsection{Tensor product of a Schreier graph and an oriented cycle}\label{SubSTensorSchrei}
Here we keep $\vec C_M$, $M\in \barN=\{1,2,\dots,\infty\}$,  as one factor of the tensor product and take the other one to be as follows.
\begin{definition}\label{DefCayley}
Let $G$ be a group with a finite generating set $X$.
The \defi{oriented (right) Cayley graph} $\orCayley{G}{X}$ is the oriented marked labeled graph with vertex set $G$ and with an oriented edge from $g$ to $h$ labeled $x$ if and only if $h=gx$, $x\in X$.
The standard choice for the root is $1_G$.

For $H\leq G$, a subgroup, we define the \defi{oriented (right) Schreier graph} $\orSchrei{G}{H}{X}$ to be the oriented marked labeled graph with vertex set $\setst{Hg}{g\in G}$ (the set of right $H$\hyphen cosets) and an edge with label $x$ from $Hg$ to $Hh$ if and only if $Hh=Hgx$.
Here the standard choice of the root is (the coset) $H$.

If $G$ acts on the right on a set $V$, we can define the \defi{graph of the action with respect to the generating set $X$} as the oriented labeled graph with vertex set $V$ and an edge from $v$ to $w$ labeled by $x$ for every generator $x\in X$ such that $v\acts x=w$.
\end{definition}
For every vertex $v$ in $V$, the connected component of the graph of the action with root $v$ is strongly isomorphic (as marked labeled oriented graph) to the Schreier graph $\orSchrei{G}{\Stab_G(v)}{X}$.

Observe that for any vertices $v$ and $w$ in $\vec\Gamma=\orCayley{G}{X}$, the oriented labeled marked graphs $(\vec\Gamma,v)$ and $(\vec\Gamma,w)$ are strongly isomorphic and thus $\orCayley{G}{X}$ is strongly vertex-transitive.
This is not correct for Schreier graphs.
Indeed, $\orSchrei{G}{H}{X}$ is in general not even weakly vertex-transitive.
\begin{remark}
For a generating set $X$ of $G$, we can look at its symmetrization $X^\pm\defegal\setst{x}{x\in X \tn{ or } x^{-1}\in X}$.
This is also a generating set of $G$.
If $e$ is the unique edge in $\orCayley{G}{X}$ with initial vertex $g\in G$ and label $x\in X^\pm$, define $\bar e$ to be the unique edge with initial vertex $gx$ and label $x^{-1}$.
It is easy to see that this operator $\bar{\phantom{e}}$ makes of the oriented graph $\orCayley{G}{X^\pm}$ a non-oriented graph, but with the possibility that $\bar e=e$.
We will note this graph $\unCayley{G}{X}$.
An important fact for us is that there is a strong isomorphism between $\unCayley{G}{X^\pm}$ and $\underline{\orCayley{G}{X}}$ if and only if there is no $x\in X$ such that $x^2=1$.
Moreover, in this case $\unCayley{G}{X^\pm}$ is a graph in the sense of Definition~\ref{DefGraph} (i.e. there is no $e$ such that $e=\bar e$).
Indeed, $x^2=1$ if and only if $x^{-1}=x$.
If $x^2=1$, then for every vertex $v$ in $\unCayley{G}{X}$, the edge with initial vertex $v$ and label $x$ is equal to the edge with initial vertex $v$ and label $x^{-1}$, but in $\underline{\orCayley{G}{X}}$ they are distinct by definition.
If there is no such $x$, the strong isomorphism is trivial.
The same observation also applies to Schreier graphs.
\end{remark}
\begin{definition}
Let $w$ be a word in the alphabet $X\sqcup X^{-1}$.
For $x\in X$, the \defi{exponent} of $x$ in $w$, $\exp_x(w)$ is the number of times $x$ appears in $w$ minus the number of times $x^{-1}$ appears in $w$.
We also define the \defi{exponent} of $X$ as the sum of exponents:
\[\exp_X(w)\defegal\sum_{x\in X}\exp_x(w).\]
\end{definition}
The definition immediately implies
\begin{lemma}
Let $G=\presentation{X}{\mathcal R}$ be a group presentation.
Then the derangement of a path in ${\orCayley{G}{X}}$ is exactly the exponent of its label.
\end{lemma}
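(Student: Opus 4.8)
The plan is to reduce the statement to an edge-by-edge comparison, exploiting that both sides are additive along the path. The derangement $\der(p)$ is by definition the sum of the entries of the signature $\sigma(p)$, while the exponent $\exp_X$ of the label is the sum, over the letters of the label word, of $+1$ for each positive generator and $-1$ for each inverse generator. Hence it suffices to match these two additive contributions, one edge (equivalently, one letter) at a time.

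First I would unwind the description of the underlying labeled graph $\underline{\orCayley{G}{X}}$. Its edges split into the edges $e$ of the orientation $\mathcal O=\vec E$ (the forward edges of $\orCayley{G}{X}$) and their formal inverses $\bar e$. By the definition of the underlying labeled graph, a forward edge $e\in\vec E$ carrying label $x\in X$ has $l'(e)=x$, whereas its inverse satisfies $l'(\bar e)=x^{-1}\in X^{-1}$. Thus the label word of $p$ records, in order, a positive generator at each forward edge and an inverse generator at each backward edge.

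Second, I would compare this with the signature computed with respect to the orientation $\mathcal O=\vec E$ coming from $\orCayley{G}{X}$, which is precisely the orientation used to define the derangement of an oriented graph. By Definition \ref{defDerangement}, the $i$-th entry of $\sigma(p)$ is $+1$ exactly when the $i$-th edge lies in $\mathcal O$, i.e. is a forward edge, and $-1$ when it is a backward edge $\bar e$. Combined with the previous step, the signature entry at position $i$ is $+1$ exactly when the $i$-th letter of the label is a positive generator $x\in X$, and $-1$ exactly when it is an inverse generator $x^{-1}$. Summing over the whole path, $\der(p)$ equals the number of forward edges minus the number of backward edges, which by this correspondence is the total number of positive-generator letters minus the total number of inverse-generator letters in the label; this is exactly $\exp_X$ of the label of $p$, since $\exp_X(w)=\sum_{x\in X}\exp_x(w)$ counts precisely total positive minus total inverse occurrences.

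This is essentially immediate from the definitions, so I do not anticipate a genuine obstacle. The only points requiring care are purely bookkeeping: verifying that summing $\exp_x$ over all $x\in X$ indeed counts (total positive letters) minus (total inverse letters) regardless of which particular generator each letter is, and confirming that the orientation used to compute the signature is $\vec E$ itself rather than some unrelated orientation on $\underline{\orCayley{G}{X}}$.
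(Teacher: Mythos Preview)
Your proof is correct and matches the paper's approach: the paper simply asserts that the lemma follows immediately from the definitions and gives no further argument, and what you have written is precisely the careful unwinding of those definitions. There is nothing to add.
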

\begin{proposition}\label{propPresentation}
Fix $M\in\barN$ and let $G=\presentation{X}{\mathcal R}$ be a group presentation such that $\exp_X(r)\equiv0 \pmod M$ for every relator $r\in \mathcal R$.
Then $\orCayley{G}{X}$ is strongly isomorphic to any connected component of $\orCayley{G}{X}\otimes\vec C_M$.
\end{proposition}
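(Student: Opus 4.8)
The plan is to reduce the statement to Theorem \ref{thmIsomorphism}, in its strong (labeled) form recorded in Remark \ref{LabelingOfCM}, the only genuine computation being the verification that the derangement of $\orCayley{G}{X}$ vanishes modulo $M$.

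First I would note that $\vec\Gamma\defegal\orCayley{G}{X}$ is connected, since $X$ generates $G$, and locally finite, since $X$ is finite; so Theorem \ref{thmIsomorphism} applies. By Proposition \ref{PropConnCompIso}, read in the labeled category as in Remark \ref{LabelingOfCM}, all connected components of $\vec\Gamma\otimes\vec C_M$ are strongly isomorphic, so it suffices to treat the component $\bigl(\vec\Gamma\otimes\vec C_M,(1_G,0)\bigr)^0$ of the root. The proposition therefore reduces to checking the congruence $\der(\vec\Gamma)\equiv0\pmod M$ demanded by Theorem \ref{thmIsomorphism}.

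To compute $\der(\vec\Gamma)$ I would invoke the lemma immediately preceding the statement: the derangement of a path in $\orCayley{G}{X}$ equals the exponent $\exp_X$ of its label. A closed path based at a vertex $g$ corresponds to a word $w$ over $X\sqcup X^{-1}$ representing the identity of $G$, hence $w$ lies in the normal closure of $\mathcal R$ in the free group $F(X)$, so $w=\prod_i h_i\,r_i^{\epsilon_i}\,h_i^{-1}$ with $r_i\in\mathcal R$, $\epsilon_i=\pm1$ and $h_i\in F(X)$. Since $\exp_X\colon F(X)\to\Z$ is a homomorphism, the conjugating factors cancel and $\exp_X(w)=\sum_i\epsilon_i\exp_X(r_i)$. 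By hypothesis each $\exp_X(r_i)\equiv0\pmod M$, whence $\exp_X(w)\equiv0\pmod M$. Thus every closed path in $\vec\Gamma$ has derangement divisible by $M$, so $\der(\vec\Gamma)$ — being either $0$ or the least absolute value of a nonzero such derangement — satisfies $\der(\vec\Gamma)\equiv0\pmod M$.

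Combining the two steps, Theorem \ref{thmIsomorphism} yields a strong isomorphism $(\vec\Gamma,1_G)\iso\bigl(\vec\Gamma\otimes\vec C_M,(1_G,0)\bigr)^0$, and Proposition \ref{PropConnCompIso} extends it to any connected component. The one point requiring care — and the place where the hypothesis on the relators actually enters — is the passage from ``$w$ represents the identity in $G$'' to the cancellation $\exp_X(h_i r_i^{\epsilon_i} h_i^{-1})=\epsilon_i\exp_X(r_i)$; this rests precisely on $\exp_X$ being a homomorphism $F(X)\to\Z$. Everything else is a direct appeal to the earlier results.
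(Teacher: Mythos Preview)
Your proof is correct and follows essentially the same route as the paper's: compute the derangement of any closed path via the lemma that $\der(p)=\exp_X(\text{label}(p))$, write the label as a product of conjugates of relators, use that $\exp_X$ is a homomorphism to reduce modulo $M$, and then invoke Theorem \ref{thmIsomorphism} with Remark \ref{LabelingOfCM}. Your version is in fact slightly more explicit than the paper's in checking connectedness, local finiteness, and in handling the exponents $\epsilon_i$ and the passage to \emph{any} component via Proposition \ref{PropConnCompIso}.
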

\begin{proof}
Let $p$ be a path with initial vertex $1$ in ${\unCayley{G}{X}}$ and let $w$ be its label.
Then $w=1$ in $G$ if and only if $p$ is closed.
But $w=1$ in $G$ if and only if $w=\prod h_ir_ih_i^{-1}$, where the $r_i$ are relators and the $h_i$ are words in $X\sqcup X^{-1}$.

On the other hand, by the previous lemma the derangement of $p$ is equal to 
\begin{align*}
\exp_X(w)&=\exp_X(\prod h_ir_ih_i^{-1})\\
&=\sum\bigl(\exp_X(h_i)+\exp_X(r_i)-\exp_X(h_i)\bigr)\\
&\equiv 0 \pmod M.
\end{align*}
We conclude using Theorem~\ref{thmIsomorphism} and Remark~\ref{LabelingOfCM}.
\end{proof}
\begin{lemma}\label{lemmaSchreierTensor}
Fix $M\in\barN$ and let $G=\presentation{X}{\mathcal R}$ be a group presentation such that $\exp_X(r)\equiv0 \pmod M$ for every relator $r\in \mathcal R$.
An oriented labeled graph $\vec\Gamma$ is the graph of an action of $G$ if and only if $\vec\Gamma\otimes\vec C_M$ is also the graph of an action of $G$.
\end{lemma}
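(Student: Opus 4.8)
The plan is to use the following characterization, immediate from Definition \ref{DefCayley}: an oriented labeled graph $\vec\Gamma$ with labels in $X$ is the graph of an action of $G=\presentation{X}{\mathcal R}$ if and only if (i) every vertex has exactly one outgoing and exactly one incoming edge with each label $x\in X$, so that each $x$ determines a permutation $\pi_x$ of the vertex set, and (ii) for every relator $r\in\mathcal R$ and every vertex $v$ the path spelling $r$ from $v$ in $\underline{\vec\Gamma}$ is closed. Condition (i) exhibits $\vec\Gamma$ as the graph of an action of the free group on $X$, with $x^{-1}$ acting by $\pi_x^{-1}$, and condition (ii) is precisely what is needed for this action to descend to $G$.

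First I would introduce the projection $\mathrm{pr}\colon\vec\Gamma\otimes\vec C_M\to\vec\Gamma$, $(v,i)\mapsto v$ on vertices and $(e,f)\mapsto e$ on edges. Under the labeling identification of Remark \ref{LabelingOfCM} this $\mathrm{pr}$ preserves labels, hence is a strong morphism, and any path spelling a word $u$ from $(v,i)$ projects to a path spelling $u$ from $v$. Moreover, since each vertex $i$ of $\vec C_M$ has a single outgoing edge (to $i+1$) and a single incoming edge (from $i-1$), the definition of the tensor product gives, for each $(v,i)$ and each label $x$, a bijection $e\mapsto(e,f_i)$ (with $f_i$ the edge $i\to i+1$) between the edges labeled $x$ issuing from $v$ and those issuing from $(v,i)$, and likewise for incoming edges. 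Thus condition (i) holds for $\vec\Gamma$ if and only if it holds for $\vec\Gamma\otimes\vec C_M$.

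Assuming (i), each label $x$ yields permutations $\pi_x$ of the vertices of $\vec\Gamma$ and $\tilde\pi_x$ of those of $\vec\Gamma\otimes\vec C_M$, linked by $\tilde\pi_x(v,i)=(\pi_x(v),i+1)$; since inverse edges lower the second coordinate by one, for an arbitrary word $u$ one gets $\tilde\pi_u(v,i)=\bigl(\pi_u(v),\,i+\exp_X(u)\bigr)$. I would then check (ii) both ways. If $\vec\Gamma$ is a graph of a $G$-action then $\pi_r=\Id$ for every relator $r$, so $\tilde\pi_r(v,i)=(v,\,i+\exp_X(r))=(v,i)$ because $\exp_X(r)\equiv0\pmod M$ by hypothesis; this is the one place where the assumption on $\mathcal R$ is used. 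Conversely, if $\vec\Gamma\otimes\vec C_M$ is a graph of a $G$-action then $\tilde\pi_r=\Id$, and projecting the closed path spelling $r$ through $\mathrm{pr}$ (equivalently, reading off the first coordinate) gives $\pi_r=\Id$. Together with the equivalence of (i) from the previous step, this yields the lemma.

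The main obstacle is conceptual rather than computational: correctly isolating the two conditions (i) and (ii) as the working definition of a graph of an action, and verifying that the labeling convention of Remark \ref{LabelingOfCM} makes $\mathrm{pr}$ label-preserving, so that spelling a word commutes with projection. Once this bookkeeping is in place, the only substantive ingredient is the second-coordinate computation $i\mapsto i+\exp_X(u)$, into which the hypothesis $\exp_X(r)\equiv0\pmod M$ feeds exactly to annihilate the second coordinate on relators in the forward direction.
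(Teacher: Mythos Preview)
Your argument is correct and follows essentially the same route as the paper's own proof: both rest on the characterization of a graph of a $G$-action via uniqueness of labeled paths together with closure of relator-paths, and both reduce the comparison to the second-coordinate computation $\tau(q)=\bigl(\tau(p),\,i+\exp_X(r)\bigr)$, into which the hypothesis $\exp_X(r)\equiv 0\pmod M$ feeds to make closedness of $p$ and $q$ equivalent. Your version is in fact slightly more careful in isolating and verifying condition~(i) separately for $\vec\Gamma$ and $\vec\Gamma\otimes\vec C_M$, whereas the paper leaves this implicit in the phrase ``the unique path''.
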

\begin{proof}
Let $\vec\Theta$ be any $X$-labeled graph such that for each $x\in X$ and each vertex $v$, there is exactly one outgoing and one ingoing edge with label $x$.
It is clear that $\vec\Theta$ is (strongly isomorphic to) a graph of an action of $G=\presentation{X}{\mathcal R}$ if and only if for every $r\in \mathcal R$, and for every vertex $v$, the unique path with initial vertex $v$ and label $r$ is closed.

Now, fix $v$ a vertex in $\vec\Gamma$, $r$ a word on $X\sqcup X^{-1}$ and $0\leq i<M$. There is a unique path $p$ with initial vertex $v$ and label $r$ in $\vec\Gamma$ and a unique path $q$ with initial vertex $(v,i)$ and label $r$ in $\vec\Gamma\otimes\vec C_M$.
We have that $\tau(q)=\bigl(\tau(p),i+\der(p)\bigr)$. Therefore, if $r$ is a relator
we have $\tau(q)=(\tau(p),i)$ and
$p$ is closed if and only if $q$ is closed.
\end{proof}
Using this lemma and Proposition~\ref{PropConnCompIso} we have the following.
\begin{proposition}\label{propSchreierTensor}
Fix $M\in\barN$ and let $G=\presentation{X}{\mathcal R}$ be a group presentation such that $\exp_X(r)\equiv0 \pmod M$ for every relator $r\in \mathcal R$.
Let $H$ be a subgroup of $G$ and let $\vec\Gamma\defegal\orSchrei{G}{H}{X}$ be the corresponding Schreier graph.
Then, every connected component of $\vec\Gamma\otimes\vec C_M$ is the Schreier graph of $G$ with respect to $X$ and to the subgroup $H_M\defegal\setst{h\in H}{\exp_X(h)\equiv 0 \pmod M}$.
\end{proposition}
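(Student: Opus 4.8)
The plan is to assemble Lemma \ref{lemmaSchreierTensor}, Proposition \ref{PropConnCompIso}, and the identification (recorded just after Definition \ref{DefCayley}) of a connected component of an action graph with a Schreier graph of the relevant stabilizer; the only genuine computation is that of the stabilizer of a root.

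First I would invoke Lemma \ref{lemmaSchreierTensor}: since $\vec\Gamma=\orSchrei{G}{H}{X}$ is the graph of an action of $G$ and $\exp_X(r)\equiv 0\pmod M$ for every $r\in\mathcal R$, the tensor product $\vec\Gamma\otimes\vec C_M$ (with its labeling identified with the first coordinate, as in Remark \ref{LabelingOfCM}) is again the graph of an action of $G$. By the remark following Definition \ref{DefCayley}, the connected component of a vertex is strongly isomorphic to the Schreier graph of its stabilizer, and by Proposition \ref{PropConnCompIso} all connected components of $\vec\Gamma\otimes\vec C_M$ are isomorphic. Hence it suffices to compute the stabilizer of a single root, which I take to be $(H,0)$, and to check that it equals $H_M$.

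I would read off this stabilizer through paths, as in the proof of Lemma \ref{lemmaSchreierTensor}. In the graph of an action of $G$, the stabilizer of a vertex $v$ is exactly the set of $w$ for which the unique path from $v$ with label $w$ is closed. Now the unique path with label $w$ and initial vertex $(Hg,i)$ ends at $\bigl(Hgw,\,i+\der(p)\bigr)$, where $\der(p)=\exp_X(w)$ directly from the definitions of derangement and exponent (an edge labeled by some $x\in X$ contributes $+1$, an inverse edge labeled $x^{-1}$ contributes $-1$). A preliminary point, exactly as in the proof of Proposition \ref{propPresentation}, is that any word representing $1_G$ is a product of conjugates of relators; hence $\exp_X$ is constant on each $g\in G$ and the quantities $Hgw$ and $\exp_X(w)\bmod M$ depend only on the image of $w$ in $G$.

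Consequently the path from $(H,0)$ with label $w$ is closed if and only if $Hw=H$ and $\exp_X(w)\equiv 0\pmod M$, that is, if and only if $w\in H$ and $\exp_X(w)\equiv 0\pmod M$; this is precisely the condition $w\in H_M$. Therefore the stabilizer of $(H,0)$ is $H_M$, its connected component is $\orSchrei{G}{H_M}{X}$, and by Proposition \ref{PropConnCompIso} so is every connected component of $\vec\Gamma\otimes\vec C_M$. I expect the one delicate point to be the bookkeeping of the second coordinate---verifying that the shift along $w$ is exactly $\exp_X(w)$ and that this descends to $G$---while everything else is a direct combination of the cited results.
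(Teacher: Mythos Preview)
Your proof is correct and follows essentially the same approach as the paper: invoke Lemma \ref{lemmaSchreierTensor} and Proposition \ref{PropConnCompIso} to reduce to computing the stabilizer of the root $(H,0)$, then identify that stabilizer as $H_M$. The only cosmetic difference is that the paper phrases the stabilizer computation via the path bijection of Lemma \ref{lemmaPath}, whereas you read off the endpoint $(Hw,\exp_X(w))$ directly; these are the same argument.
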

\begin{proof}
First, note that since $\exp_X(r)\equiv0 \pmod M$ for every relator $r$, the exponent of $g\in G$ is well defined modulo $M$.
By Proposition~\ref{PropConnCompIso} and Remark~\ref{LabelingOfCM}, all connected components of $\vec\Gamma\otimes\vec C_M$ are strongly isomorphic.
By the previous lemma, $\vec\Gamma\otimes\vec C_M$ is a graph of an action of $G$ and therefore all its connected components are Schreier graphs of $G$.

Now, let $v$ be a vertex in $\vec\Gamma$ corresponding to the subgroup $H$.
The subgroup $H_M$ consists of labels of paths from $(v,0)$ to $(v,0)$ in $\underline{\vec\Gamma\otimes\vec C_M}$.
By Lemma~\ref{lemmaPath}, for any signature $\sigma$, there is a bijection between the set of closed paths $p$ with initial vertex $(v,0)$ and signature $\sigma$ and the set of couples $(q,r)$ where $q$ is a closed path with initial vertex $v$, $r$ a closed path with initial vertex $0$, both of signature $\sigma$.
But there is a path $r$ from $0$ to $0$ with signature $\sigma$ in $\vec C_M$ if and only if $\der(r)\equiv 0\pmod M$, and in this case there is a unique such path.
Finally, we conclude using the fact that the labeling of $\vec\Gamma\otimes\vec C_M$ is inherited from the labeling of $\vec\Gamma$.
\end{proof}
Observe that if in Proposition~\ref{propSchreierTensor}, $\Gamma=\orCayley{G}{X}$ then it corresponds to a Schreier graph with $H=\{1\}$ and thus $H_M=\{1\}$ and every connected component of $\vec\Gamma\otimes\vec C_M$ is isomorphic to $\vec\Gamma$ itself.
\begin{remark}\label{rmkGeo}Lemma~\ref{lemmaSchreierTensor} and Proposition~\ref{propSchreierTensor} have a geometrical meaning.
If for every relator $r$, $\exp_X(r)\equiv0 \pmod M$, then $G$ naturally acts on $\vec C_M$ by $i\acts g\defegal i+\exp_X(g)$.
Therefore, the action of $G$ on $\vec \Gamma\otimes \vec C_M$ is the product of the actions.
In term of Schreier graphs, that exactly means that $H_M=H\cap L_M$, where $L_M$ is the subgroup of $G$ which stabilizes the vertex $i\in \vec C_M$.
\end{remark}
%
%
%
%
%
\subsection{The case of lamplighter groups}\label{subsectionLamplighter}
By the lamplighter group $\lamp_k$, for $k\geq 2$, we mean the restricted wreath product $\Z/k\Z\wr\Z\iso\bigr(\bigoplus_\Z\Z/k\Z\bigl)\rtimes\Z$ where $\Z$ acts on the normal subgroup $\A_k\defegal\bigoplus_\Z\Z/k\Z$ by shifting the coordinates.
It is easy to see that it is given by the presentation
\[
	\lamp_k\defegal\presentation{b,c}{c^k,[c,b^ncb^{-n}]; n\in \N},\tag{\dag}\label{presentation1}
\]
where $[x,y]=x^{-1}y^{-1}xy$ is the commutator of $x$ and $y$.
Observe that this in particular implies $[b^mcb^{-m},b^ncb^{-n}]=1$ in $\lamp_k$ for all $m$ and $n$ in $\Z$.

The subgroup $\A_k=\oplus_\Z\Z/k\Z$, called the abelian base of the wreath product, is generated by $\{b^icb^{-i}\}_{i\in\Z}$, while $\Z$ is generated by $b$.
The (right) action of $b$ on $\A_k$ is by shift, $b^{i}cb^{-i}\acts b=b^{i-1}cb^{-i+1}$.

Following \cite{MR1866850}, instead of the ``classical'' presentation \eqref{presentation1} of $\lamp_k$, in this paper we will use the following  presentation coming from the automaton presentation.
Consider the set $X_k=\{b,\bar c_1,\dots, \bar c_{k-1}\}$, where $\bar c_i=bc^i$.
Note that $c=b^{-1}\bar c_1$, so $X_k$ does generate $\lamp_k$.
It will be convenient to write $\bar c_0$ for $b=bc^0$, so that $X_k=\{\bar c_i\}_{i=0}^{k-1}$.
\[
	\lamp_k=\presentation{X_k}{(b^{-1}\bar c_1)^k,b(b^{-1}\bar c_1)^i{\bar c_i}^{-1},[b^{-1}\bar c_1,b^{n-1}\bar c_1b^{-n}];n\in \N, 2\leq i\leq k-1}.\tag{\ddag}\label{presentation2}
\]

It is possible to check that $1$ does not belong to $X_k$ and that $x\in X_k$ implies that $x^{-1}\notin X_k$.
In particular, the graph $\unCayley{\lamp_k}{X_k}$ is strongly isomorphic to $\underline{\orCayley{\lamp_k}{X_k}}$.
It is interesting to note that with this particular choice of generators, the graph $\unCayley{\lamp_k}{X_k}$ is weakly isomorphic to the Diestel-Leader graph $DL(k,k)$ (see \cite{MR2138121}).
\begin{remark}\label{RmkLamplighter}
It is easy to see that, if $G$ is any finite group and we consider the restricted wreath product $G\wr\Z$, where $\Z=\langle b\rangle$ and choose the generating set $\{bg\}_{g\in G}$, then the corresponding Cayley graph will be also weakly isomorphic to $\unCayley{\lamp_k}{X_k}$ and thus to the Diestel-Leader graph $\DL(k, k)$.
For the rest of the paper, we will focus on the lamplighter group $\lamp_k$.
\end{remark}
We immediately have
\[
	\exp_{X}(b^{-1}\bar c_1)^k=\exp_{X}b(b^{-1}\bar c_1)^i{\bar c_i}^{-1}=\exp_{X}([b^{-1}\bar c_1,b^{n-1}\bar c_1b^{-n}])=0.
\]
Hence, the presentation \eqref{presentation2} of $\lamp_k$ satisfies the hypothesis of Proposition~\ref{propPresentation} and we proved the following special case of Proposition~\ref{propSchreierTensor}.
\begin{proposition}\label{PropPresentation2}
For all $k\geq 2$ and $M\in\barN$, every connected component of $\orCayley{\lamp_k}{X_k}\otimes\vec C_M$ is strongly isomorphic to $\orCayley{\lamp_k}{X_k}$.
\end{proposition}
%
%
%
%
%
\section{Spider-web graphs and lamplighter groups}\label{sectionSpider}
A slightly different version of spider-web graphs, called spider-web networks, was first introduced by Ikeno in \cite{Ikeno}.
The $2$-parameter family $\{\unkSw{k}{N}{M}\}$ that we will presently define is a natural extension of the well-known $1$-parameter family of the de Bruijn graphs $\{\unB_{k,N}\}$, $k\geq 2$.
In \cite{MR2904386}, Balram and Dhar observed, in the special case $k=2$, some link between spider-web graphs and the Cayley graph of the lamplighter group $\lamp_2$.

The aim of this section is to discuss the definition of spider-web graphs $\unkSw{k}{N}{M}$ and to show that they converge to the Cayley graph of the lamplighter group $\lamp_k$.
This is our Theorem~\ref{ThmConvergenceM1} for the oriented case and Corollary~\ref{CorConvergenceUnoriented} for the non-oriented case.
In order to do that, we first prove Theorem~\ref{ThmWeakIsomorphism} which shows that de Bruijn graphs are weakly isomorphic to Schreier graphs of the lamplighter group.

From now on, we fix a $k\geq 2$ and omit to write it when it is not necessary. We will use the notations $\N=\{1,2,\dots\}$, $\Nstar=\N\cup\{0\}$ and $\barN=\N\cup\{\infty\}$.
%
%
%
%
%
\subsection{De Bruijn Graphs}\label{subsectionBruijn}
\begin{definition}\label{DefiDeBruijn}
For every $N\in\Nstar$, the $N$\hyphen dimensional \defi{de Bruijn graph} $\orB_{k,N}$ on $k$ symbols is the oriented labeled graph with vertex set $\{0,\dots,k-1\}^N$ and, for every vertex $x_1\dots x_N$, $k$ outgoing edges labeled by $R_0$ to $R_{k-1}$.
The edge labeled by $R_y$ has $x_2\dots x_Ny$ as end vertex.
\end{definition}
Sometimes de Bruijn graphs are defined  as non-oriented graphs.
In the following, we will write $\orB_{N}=\orB_{k,N}$ for the oriented version and $\unB_{N}=\unB_{k,N}$ for the non-oriented one.
Note that in our formalism, the graph $\unB_{N}$ is $\underline{\orB_{N}}$.

De Bruijn graphs are widely seen as representing overlaps between strings of symbols and are also combinatorial models of the Bernoulli map $x\mapsto kx \pmod 1$ and therefore are of interest in the theory of dynamical systems.

It is shown in \cite{MR891925} that each de Bruijn graph $\orB_{N}$ is the line graph (see Definition~\ref{defLine}) of the previous one, $\orB_{N-1}$.
For the sake of completeness we include here a proof of this fact which is crucial for our purposes.
\begin{lemma}[\cite{MR891925}]\label{LemmaLineDeBruijn}
For every $N\in \Nstar$, the de Bruijn graph $\orB_{N+1}$ is (weakly) isomorphic to the line graph of $\orB_{N}$.
\end{lemma}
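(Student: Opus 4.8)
The plan is to exhibit an explicit bijection between the vertex set of $\orB_{N+1}$ and the vertex set of $L(\orB_N)$, and then verify that this bijection respects the edge structure (as labeled oriented graphs). The key observation is that vertices of $L(\orB_N)$ are, by Definition \ref{defLine}, exactly the edges of $\orB_N$, and a de Bruijn graph's edges carry a very concrete combinatorial description coming from Definition \ref{DefiDeBruijn}.

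\smallskip

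First I would describe the edges of $\orB_N$ explicitly. An edge of $\orB_N$ starts at some vertex $x_1\dots x_N$ and, if it is labeled $R_y$, ends at $x_2\dots x_N y$. Thus an edge is completely determined by the pair consisting of its initial vertex $x_1\dots x_N$ and its label $y \in \{0,\dots,k-1\}$; equivalently, by the word $x_1 x_2\dots x_N y$ of length $N+1$. This gives a natural bijection $\Phi$ from the edge set of $\orB_N$ to $\{0,\dots,k-1\}^{N+1}$, which is precisely the vertex set of $\orB_{N+1}$. So $\Phi$ is a candidate bijection on vertices from $L(\orB_N)$ to $\orB_{N+1}$.

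\smallskip

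Next I would check that $\Phi$ carries edges of $L(\orB_N)$ to edges of $\orB_{N+1}$ with matching labels. In $L(\orB_N)$ there is an edge from an edge $e$ to an edge $f$ exactly when $\tau(e)=\iota(f)$. Writing $e \leftrightarrow x_1\dots x_N y$ via $\Phi$, we have $\tau(e) = x_2\dots x_N y$, so the condition $\tau(e)=\iota(f)$ forces $f$ to start at $x_2\dots x_N y$; if $f$ carries label $z$, then $f \leftrightarrow x_2\dots x_N y z$. Under $\Phi$ these correspond to the $\orB_{N+1}$-vertices $x_1\dots x_N y$ and $x_2\dots x_N y z$, and by Definition \ref{DefiDeBruijn} there is indeed an edge in $\orB_{N+1}$ (labeled $R_z$) from the former to the latter, since the latter is obtained from the former by deleting the first symbol and appending $z$. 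Conversely, every edge of $\orB_{N+1}$ arises this way. I would record the label $z$ on both sides to confirm this is a \emph{strong} (label-preserving) isomorphism, which is needed for the applications in the paper, and note that it is in particular a weak isomorphism as claimed in the statement.

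\smallskip

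\emph{The main obstacle} is essentially bookkeeping rather than conceptual difficulty: one must keep the roles of ``initial vertex plus label'' and ``word of length $N+1$'' straight and verify that the follow relation $\tau(e)=\iota(f)$ in the line graph translates exactly into the overlap/shift relation defining the de Bruijn edges, with no off-by-one error in which symbol is dropped or appended. Once the bijection $\Phi$ is pinned down correctly, the verification that it preserves both the incidence structure and the labeling is a direct unwinding of the two definitions.
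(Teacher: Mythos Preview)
Your argument is correct and is essentially the same construction as the paper's: both identify an edge of $\orB_N$ with the word $x_1\dots x_Ny$ formed from its initial vertex and its label, and then verify that the follow relation $\tau(e)=\iota(f)$ in the line graph corresponds exactly to the shift-and-append edge relation in $\orB_{N+1}$. One small remark: the line graph in Definition~\ref{defLine} carries no canonical labeling, and the paper's applications (notably Theorem~\ref{ThmWeakIsomorphism}) only use the \emph{weak} isomorphism, so your aside that a strong isomorphism ``is needed for the applications in the paper'' is not quite right, though it does no harm to observe that with the labeling you describe the isomorphism happens to preserve labels.
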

\begin{proof} 
It is clear from the definition that $\orB_{1}$ is weakly isomorphic to the complete oriented graph on $k$ vertices, with loops.
That is, $\orB_{1}$ has $k$ vertices and for each pair $(v,w)$ of vertices, there is exactly one edge from $v$ to $w$.
In particular, for every $v$ there is a unique edge from $v$ to itself.
It is then obvious that $\orB_{1}$ is weakly isomorphic to the line graph of $\orB_{0}$ (the rose). 

Observe that, for any $N$, for each vertex $v$ in $\orB_{N}$ and each label $R_y$, there is exactly one edge $e$ with initial vertex $v$ and label $R_y$.
Therefore, there is a natural bijection between the vertex set of the line graph of $\orB_{N}$ and the set of couples $\setst{(v,R_y)}{v \tn{ a vertex in }\orB_{N}, 0\leq y<k}$.
Let $v=(x_1\dots x_N)$ be a vertex in $\orB_{N}$. If $N\geq 1$, there is an edge in the line graph from $(v,R_y)$ to $(w,R_z)$ if and only if $w=(x_2\dots x_Ny)$.

We construct now an explicit weak isomorphism $\phi$ from the line graph of $\orB_{N}$ to $\orB_{N+1}$.
We define $\phi$ on the vertices by $\phi(v,R_y)\defegal(x_1\dots x_Ny)$ if $v=(x_1\dots x_N)$.
This is obviously a bijection.
If $N\geq 1$, there is a unique edge in the line graph from $\bigl((x_1\dots x_N),R_y\bigr)$ to $\bigl((x_2\dots x_Ny),R_z\bigr)$ (and all edges are of this form).
Let the image of this edge by $\phi$ be the unique edge in $\orB_{N+1}$ with initial vertex $(x_1\dots x_Ny)$ and label $R_z$ --- see Figure~\ref{HomoBruijn}.
It is straightforward to see that $\phi$ is injective on the set of edges.
Since the two graphs have the same finite number of edges $(k\cdot k^{N+1})$, $\phi$ is also bijective on the set of edges.
Moreover, by definition, $\phi\bigl(\iota(e)\bigr)=\iota\bigl(\phi(e)\bigr)$ for any edge $e$ in the line graph.
Hence, to show that $\phi$ is a weak isomorphism it only remains to check that $\phi\bigl(\tau(e)\bigr)=\tau\bigl(\phi(e)\bigr)$.
If $e$ is an edge from $\bigl((x_1\dots x_N),R_y\bigr)$ to $\bigl((x_2\dots x_Ny),R_z\bigr)$, we have 
\[
	\phi\bigl(\tau(e)\bigr)=(x_2x_3\dots x_Nyz).
\]
On the other hand, $\phi(e)$ has initial vertex $(x_1\dots x_Ny)$ and label $R_{z}$.
Therefore,
\[
	\tau\bigl(\phi(e)\bigr)=(x_2x_3\dots x_Nyz)=\phi\bigl(\tau(e)\bigr).
\]
\end{proof}
\begin{figure}[ht]
\centering
\includegraphics{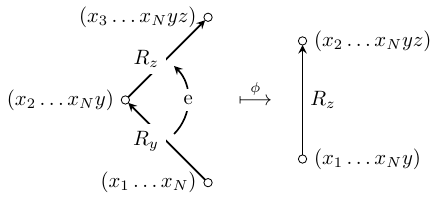}
\caption{The edge $e$ in the line graph of $\orB_{N}$ and its image by $\phi$.}
\label{HomoBruijn}
\end{figure}
%
%
%
%
%
\subsection{Spider-web graphs}
\begin{definition}\label{DefinitionSw}
Let $k\geq 2$. For all $M\in\N$ and  $N\in\Nstar$, the \defi{spider-web graph}  is the labeled oriented graph $\orSw{k}{N}{M}=\orkSw{k}{N}{M}$ with vertex set $\{0,\dots,k-1\}^N\times \{1,2,\dots,M\}$ and for every vertex $v=(x_1\dots x_N,i)$ with $k$ outgoing edges labeled by $R_0$ to $R_{k-1}$.
The edge labeled by $R_y$ has $(x_2\dots x_Ny,i+1)$ as end vertex, where $i+1$ is taken modulo~$M$.
See figure \ref{FigSpiderWeb} for an example.
\end{definition}
\begin{figure}[ht]
\centering
\includegraphics{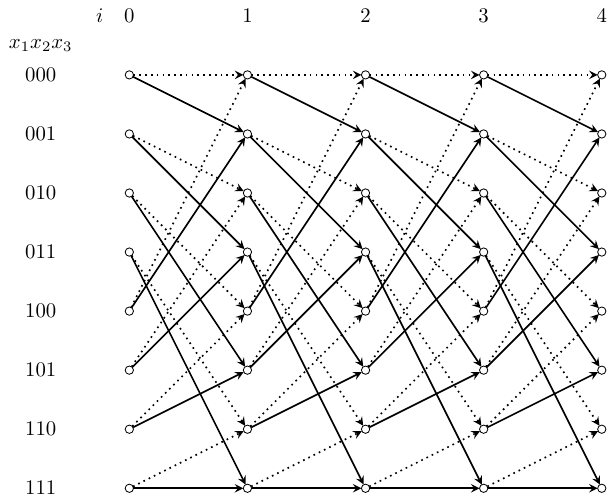}
\caption{
The graph $\orkSw{2}{3}{4}$, where vertices in the slice $i=0$ are identified with vertices in the slice $i=M=4$. Solid edges correspond to edges with label $R_1$ and dotted edges to edges with label $R_0$.}
\label{FigSpiderWeb}
\end{figure}
As with de Bruijn graphs, we write $\orSw{k}{N}{M}$ for the oriented version and $\unSw{k}{N}{M}$ for the non-oriented one.
The vertices of $\unSw{k}{N}{M}$ are partitioned into slices $1,\dots, M$ and edges connect vertices in slice $i$ to vertices in slice $i+1 \pmod M$.
Note that in our definition (unlike papers that talk about spider-web networks), the vertices of the slice $M$ are connected to the vertices of the slice $1$.
Note also that it is possible to similarly define $\orkSw{k}{N}{\infty}$ for all $N$ (with vertex set $\{0,\dots,k-1\}^N\times\Z$).

Observe that the graph $\orkSw{k}{0}{M}$ is a ``thick'' oriented circle (or line if $M=\infty$): the vertex set is $M$ and for every vertex $i$ there are $k$ edges from $i$ to $i+1$.
The graph $\orkSw{1}{0}{M}$ is the usual oriented circle $\vec C_M$.
Therefore,  $\orkSw{k}{0}{1}$ is the rose with one vertex and $k$ oriented edges.

\begin{lemma}\label{lemmaIsoSwTensor}
For all $M\in\barN$ and $N\in\Nstar$ there is a strong isomorphism between $\orSw{k}{N}{M}$ and $\orSw{k}{N}{1}\otimes\vec C_M$.
\end{lemma}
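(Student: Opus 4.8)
The plan is to obtain the lemma from Theorem \ref{thmIsomorphism}, which reduces everything to the computation of the derangement $\der(\orSw{k}{N}{M})$. The key observation is that the slice coordinate plays the role of a height function: with respect to the orientation of $\orSw{k}{N}{M}$ coming from Definition \ref{DefinitionSw}, every oriented edge sends a vertex $(x_1\dots x_N,i)$ to a vertex lying in slice $i+1 \pmod M$. Consequently, for a path $p$ in the underlying graph the derangement $\der(p)$ (which counts oriented edges with sign $+1$ and their inverses with sign $-1$) reduces modulo $M$ to the net change of the slice coordinate along $p$.

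First I would use this to show that $\der(\orSw{k}{N}{M})\equiv 0 \pmod M$. A closed path returns to its initial slice, so its net slice change is $0$ in $\Z/M\Z$, whence $\der(p)\equiv 0 \pmod M$ for every closed path $p$; thus the minimum in Definition \ref{defDerangement} ranges over multiples of $M$. When $M$ is finite this minimum is attained and equals $M$: following the edge labelled $R_0$ at the constant string $0\cdots 0$ produces a loop that advances only the slice, so going $M$ times around gives a closed path of derangement exactly $M$. When $M=\infty$ no closed path has nonzero derangement and $\der(\orSw{k}{N}{M})=0$. In every case $\der(\orSw{k}{N}{M})\equiv 0 \pmod M$.

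It then remains to check the hypotheses of Theorem \ref{thmIsomorphism}. The graph $\orSw{k}{N}{M}$ is connected: de Bruijn graphs are strongly connected and carry a loop at $0\cdots 0$, so any two vertices can be routed by forward paths to a common vertex in a high slice, which yields weak connectedness. Moreover it is finite for $M<\infty$ and locally finite for $M=\infty$. Theorem \ref{thmIsomorphism} therefore applies and gives an isomorphism of marked oriented graphs $\bigl(\orSw{k}{N}{M},v\bigr)\iso\bigl(\orSw{k}{N}{M}\otimes\vec C_M,(v,0)\bigr)^0$; Remark \ref{LabelingOfCM} upgrades it to a strong, label-preserving isomorphism, and Proposition \ref{PropConnCompIso} shows that every connected component of $\orSw{k}{N}{M}\otimes\vec C_M$ is strongly isomorphic to $\orSw{k}{N}{M}$, which is the assertion.

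The only substantive point is the derangement computation, that is, recognising the slice coordinate as a $\Z/M\Z$-valued height incremented by every edge so that $\der$ must vanish modulo $M$ on closed paths; the degenerate cases $M=1$ and $M=\infty$ and the verification of connectedness are routine. Alternatively one could bypass Theorem \ref{thmIsomorphism} and write the isomorphism onto the component of the root explicitly as $w\mapsto\bigl(w,\rank(w)\bigr)$, where $\rank(w)$ is the slice of $w$ minus the slice of the root taken modulo $M$, and check directly from Definition \ref{DefinitionSw} that it preserves the $R_y$\hyphen labelled edges.
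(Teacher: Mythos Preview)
The statement you are working from contains a typo: the intended right\hyphen hand side is $\orSw{k}{N}{1}\otimes\vec C_M$, not $\orSw{k}{N}{M}\otimes\vec C_M$. This is visible from a vertex count---for finite $M\geq 2$ the graph $\orSw{k}{N}{M}\otimes\vec C_M$ has $k^NM^2$ vertices against $k^NM$ for $\orSw{k}{N}{M}$, so the literal assertion is false---and it is confirmed by the use made of the lemma immediately afterwards: together with the identification $\orSw{k}{N}{1}\iso\orB_N$ it ``directly implies'' Corollary~\ref{corIsoSwB}, i.e.\ $\orSw{k}{N}{M}\iso\orB_N\otimes\vec C_M$.

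With the typo corrected, the paper gives no proof because none is needed: the isomorphism is the identity on vertex sets. Vertices of $\orSw{k}{N}{1}\otimes\vec C_M$ are pairs $(x_1\dots x_N,i)$ with $i\in\Z/M\Z$, and the edge labelled $(R_y,\text{unique edge of }\vec C_M)$ goes from $(x_1\dots x_N,i)$ to $(x_2\dots x_Ny,i+1)$, which is exactly Definition~\ref{DefinitionSw}. Under the convention of Remark~\ref{LabelingOfCM} the labels match as well, so the isomorphism is strong.

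Your argument establishes a different (and true) fact: since $\der(\orSw{k}{N}{M})\equiv 0\pmod M$, Theorem~\ref{thmIsomorphism} and Proposition~\ref{PropConnCompIso} show that every connected component of $\orSw{k}{N}{M}\otimes\vec C_M$ is strongly isomorphic to $\orSw{k}{N}{M}$. But this tensor product has $M$ components (Proposition~\ref{PropNbrComponent}), so you have not produced an isomorphism with the whole graph; the sentence ``which is the assertion'' is where the gap lies. Nor does your route reach the intended statement: applying Theorem~\ref{thmIsomorphism} with $\vec\Gamma=\orSw{k}{N}{1}$ fails for $M\geq 2$ because $\der(\orSw{k}{N}{1})=1\not\equiv 0\pmod M$ (there is a loop at $0\cdots 0$). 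The correct proof here is simply to unfold the two definitions and match them, as your final ``alternatively'' paragraph essentially suggests---that direct map \emph{is} the paper's argument, and it already gives the full graph rather than just a component.
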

\begin{remark}
Observe that this lemma is not true if we consider the non-oriented spider-web graphs.
This is the main reason why we are brought to work with oriented graphs in this article, even though the final result that we aim at and that we get are about non-oriented graphs (Corollary~\ref{CorConvergenceUnoriented}).
\end{remark}
Lemma~\ref{lemmaIsoSwTensor} together with Theorem~\ref{ThmLimit} ensures that in order to identify the limit of spider-web graphs when $M,N\rightarrow\infty$ it is enough to study the limit of spider-web graphs with $M=1$.
It turns out that the spider-web graphs with $M=1$ are exactly de Bruijn graphs.

Indeed the identification given by $(x_1\dots x_N,1)\mapsto(x_1\dots x_N)$ induces a strong isomorphism between $\orSw{k}{N}{1}$ and $\orB_{N}$.
The isomorphism between non-oriented versions follows.
Hence we have the following.
\begin{proposition}
For all $N\in\Nstar$, the oriented graph $\orSw{k}{N}{1}$ is strongly isomorphic to $\orB_{N}$ and the non-oriented graph $\unSw{k}{N}{1}$ is strongly isomorphic to~$\unB_{N}$.
\end{proposition}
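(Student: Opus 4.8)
The plan is to exhibit the explicit bijection announced just before the statement and to check that it respects both the oriented edge structure and the labeling, so that it is a strong isomorphism. I would define $\phi$ on vertices by $\phi(x_1\dots x_N,1)\defegal x_1\dots x_N$. Since the slice set $\{1,\dots,M\}$ reduces to the singleton $\{1\}$ when $M=1$, the vertex set of $\orSw{k}{N}{1}$ is $\{0,\dots,k-1\}^N\times\{1\}$, which is in canonical bijection with the vertex set $\{0,\dots,k-1\}^N$ of $\orB_{N}$; hence $\phi$ is a bijection on vertices.

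Next I would verify that $\phi$ transports edges to edges while preserving labels. Fix a vertex $(x_1\dots x_N,1)$ and a label $R_y$. By Definition \ref{DefinitionSw} the unique $R_y$-edge out of this vertex ends at $(x_2\dots x_Ny,\,i+1)$ with $i=1$ and the second coordinate read modulo $M=1$; as there is only one residue class, this coordinate necessarily returns to the single slice, so the endpoint is $(x_2\dots x_Ny,1)$. Its image under $\phi$ is $x_2\dots x_Ny$, which is exactly the endpoint of the $R_y$-edge out of $x_1\dots x_N$ in $\orB_{N}$ by Definition \ref{DefiDeBruijn}. Thus $\phi$ sends the $R_y$-edge to the $R_y$-edge for every $y$ and every base vertex, so it is label-preserving; being bijective on both vertices and edges, it is a strong isomorphism of oriented labeled graphs.

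The only point requiring care is this collapsing of the slice index modulo $M=1$, which is the sole place where the spider-web construction differs from the de Bruijn one: with $M=1$ the cyclic coordinate carries no information, and everything else is a direct unwinding of the two definitions. For the non-oriented statement I would simply invoke functoriality of the underlying-graph construction: since $\unSw{k}{N}{1}=\underline{\orSw{k}{N}{1}}$ and $\unB_{N}=\underline{\orB_{N}}$, and passing to underlying labeled graphs sends strong isomorphisms to strong isomorphisms, the map $\phi$ induces a strong isomorphism $\unSw{k}{N}{1}\iso\unB_{N}$. I do not expect any genuine obstacle here; the content of the proposition is entirely the observation that the $M=1$ slice structure is trivial, after which the two definitions agree on the nose (including the base case $N=0$, where both graphs are the rose with $k$ loops).
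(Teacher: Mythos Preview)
Your proposal is correct and matches the paper's approach exactly: the paper simply notes, just before the proposition, that the identification $(x_1\dots x_N,1)\mapsto(x_1\dots x_N)$ induces a strong isomorphism between $\orSw{k}{N}{1}$ and $\orB_{k,N}$, and that the non-oriented case follows. Your write-up spells out the verification of this one-line observation in full detail, including the collapsing of the slice coordinate modulo $1$ and the functoriality of passing to underlying graphs.
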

\noindent Lemma~\ref{lemmaIsoSwTensor} directly implies the following.
\begin{corollary}\label{corIsoSwB}
For all $M\in\barN$ and $N\in\Nstar$, $\orSw{k}{N}{M}$ is strongly isomorphic to $\orB_{N}\otimes \vec C_M$.
\end{corollary}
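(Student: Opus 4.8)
The plan is to chain together results already established in the excerpt rather than work directly with the combinatorial definitions. The statement to prove is Corollary \ref{corIsoSwB}: for every $M\in\barN$ and every $N\in\Nstar$, the oriented graph $\orSw{k}{N}{M}$ is strongly isomorphic to $\orB_{N}\otimes\vec C_M$. The corollary is explicitly flagged as a direct consequence of Lemma \ref{lemmaIsoSwTensor}, so the backbone of the argument is a two-step substitution.

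First I would invoke the Proposition immediately preceding the corollary, which gives a strong isomorphism $\orSw{k}{N}{1}\iso\orB_{N}$. This handles the $M=1$ case and, more importantly, identifies the ``base'' factor appearing in Lemma \ref{lemmaIsoSwTensor}. Second, I would apply Lemma \ref{lemmaIsoSwTensor}, which asserts a strong isomorphism between $\orSw{k}{N}{M}$ and $\orSw{k}{N}{M}\otimes\vec C_M$. The only subtlety is that the lemma as stated tensors $\orSw{k}{N}{M}$ with $\vec C_M$, whereas the corollary wants $\orB_{N}\otimes\vec C_M$; so I must read the lemma with the understanding that the relevant base is the $M=1$ slice, i.e. that the natural statement being used is $\orSw{k}{N}{M}\iso\orSw{k}{N}{1}\otimes\vec C_M$. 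Combining this with the identification $\orSw{k}{N}{1}\iso\orB_{N}$ and with functoriality of the tensor product (the fact, recorded right after the definition of tensor product, that $\phi\otimes\phi'$ is an isomorphism whenever $\phi$ and $\phi'$ are), I obtain
\[
\orSw{k}{N}{M}\iso\orSw{k}{N}{1}\otimes\vec C_M\iso\orB_{N}\otimes\vec C_M,
\]
which is exactly the claim. The strong (label-preserving) nature of each isomorphism is preserved throughout, since both the Proposition and Lemma \ref{lemmaIsoSwTensor} are stated as strong isomorphisms and the tensor product of strong isomorphisms is strong.

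Concretely I would write: by the preceding Proposition, $\Id_{\vec C_M}\otimes(\text{the isomorphism }\orSw{k}{N}{1}\iso\orB_{N})$ is a strong isomorphism $\orSw{k}{N}{1}\otimes\vec C_M\iso\orB_{N}\otimes\vec C_M$; precomposing with the strong isomorphism of Lemma \ref{lemmaIsoSwTensor} finishes the proof. The main (and essentially only) obstacle is bookkeeping with the labelings: one must confirm that Remark \ref{LabelingOfCM}'s convention of identifying the labeling $l\times l'$ of $\vec\Gamma\otimes\vec C_M$ with its first coordinate is the convention under which Lemma \ref{lemmaIsoSwTensor} is a \emph{strong} isomorphism, so that the labels $R_0,\dots,R_{k-1}$ match up on both sides. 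Since the label set of $\orSw{k}{N}{M}$ and of $\orB_{N}$ is identically $\{R_0,\dots,R_{k-1}\}$ and $\vec C_M$ contributes no label under this identification, this verification is immediate. There is no genuine combinatorial difficulty left once Lemma \ref{lemmaIsoSwTensor} is granted, which is why the corollary is stated as following ``directly.''
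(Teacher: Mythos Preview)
Your proposal is correct and follows exactly the approach of the paper, which simply states that the corollary follows directly from Lemma~\ref{lemmaIsoSwTensor} (combined with the preceding Proposition identifying $\orSw{k}{N}{1}$ with $\orB_N$). You have also correctly spotted and handled the apparent typo in the statement of Lemma~\ref{lemmaIsoSwTensor}: the intended assertion is $\orSw{k}{N}{M}\iso\orSw{k}{N}{1}\otimes\vec C_M$, as the surrounding text makes clear.
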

\begin{figure}[ht]
\centering
\includegraphics{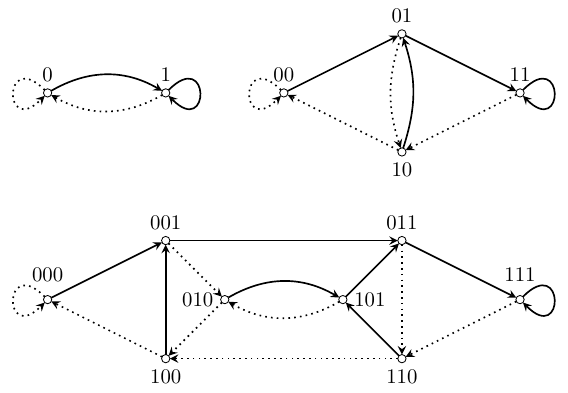}
\caption{Oriented de Bruijn graphs for $k=2$ and $N\in\{1,2,3\}$. Solid edges correspond to edges with label $R_1$ and dotted edges to edges with label $R_0$.}
\label{BruijnGraphs}
\end{figure}
\begin{lemma}\label{LemmaConnected}
For all $M\in\barN$ and $N\in \Nstar$, the graph $\unSw{k}{N}{M}$ is connected.
\end{lemma}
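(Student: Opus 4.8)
The goal is to prove that $\unSw{k}{N}{M}$ is connected for every $M\in\barN$ and $N\in\Nstar$. My plan is to reduce the statement to a connectivity criterion for tensor products with an oriented cycle, using the structural results already established.

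First I would recall from Corollary \ref{corIsoSwB} that $\orSw{k}{N}{M}$ is strongly isomorphic to $\orB_{N}\otimes\vec C_M$, so that $\unSw{k}{N}{M}$ is the underlying graph of $\orB_{N}\otimes\vec C_M$. Since connectedness is a property of the underlying (non-oriented) graph, it suffices to show that this tensor product has a single connected component. By Proposition \ref{PropNbrComponent}, the number of connected components of $\orB_{N}\otimes\vec C_M$ is exactly $M$ if $\der(\orB_{N})\equiv 0\pmod M$ and otherwise equals $\abs{[\der(\orB_{N})]}$. The crux of the argument is therefore to compute the derangement $\der(\orB_{N})$ and check it equals $1$: this forces the number of components to be $\abs{[1]}=1$ regardless of $M$ (and also in the case $M=\infty$, where $[\der]=\der=1$).

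The main obstacle, then, is the computation $\der(\orB_{N})=1$. Recall $\der(\orB_{N})$ is the minimal absolute value of a nonzero derangement over closed paths. Since every edge of $\orB_{N}$ is oriented consistently (each label $R_y$ points from a vertex to its successor in the fixed orientation), a closed path using $a$ forward edges and $b$ backward edges has derangement $a-b$. To get derangement exactly $1$, I would exhibit a single closed path that traverses one edge forward and then returns along a path made entirely of backward edges of length differing by a net derangement of $1$; the cleanest instance is to note that $\orB_{N}$ has loops (for instance at the constant vertices $00\cdots0$ and $11\cdots1$, as visible in Figure \ref{BruijnGraphs}), so a single loop is a closed path of derangement $1$. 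This immediately gives $\der(\orB_{N})\leq 1$, and since a nonzero derangement has absolute value at least $1$, we get $\der(\orB_{N})=1$.

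Putting the pieces together: because $\der(\orB_{N})=1$, for finite $M$ we are in the case $\der(\orB_{N})\not\equiv 0\pmod M$ (as $1\not\equiv 0$ for $M\geq 2$, while for $M=1$ the cycle $\vec C_1$ is a single vertex with a loop and the tensor product is trivially connected), so the number of components equals $\abs{[1]}=1$; for $M=\infty$ the number of components equals $\abs{\der(\orB_{N})}=1$ by the final clause of Proposition \ref{PropNbrComponent}. Hence $\orB_{N}\otimes\vec C_M$ is connected, and so is its underlying graph $\unSw{k}{N}{M}$. One small point I would verify carefully is the boundary case $M=1$ separately (or simply observe $\vec C_1$ has a loop so $\der(\vec C_1)=1$ and the product is connected), ensuring the statement holds uniformly across all $M\in\barN$.
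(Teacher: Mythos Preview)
Your proof is correct and follows essentially the same approach as the paper: use Corollary~\ref{corIsoSwB} to identify $\orSw{k}{N}{M}$ with $\orB_N\otimes\vec C_M$, observe that $\der(\orB_N)=1$ because of the loop at the vertex $(0\dots0)$, and then invoke Proposition~\ref{PropNbrComponent} to conclude that there is a single connected component. Your explicit case distinction on $M$ is slightly more detailed than the paper's one-line appeal to Proposition~\ref{PropNbrComponent} (which already handles $M=1$ and $M=\infty$ uniformly), but the argument is the same.
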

\begin{proof}
By the previous corollary, $\orSw{k}{N}{M}\iso\orB_{N}\otimes \vec C_M$.
On the other side, $\der(\orB_{N})=1$ since there is a loop (labeled by $R_0$) at the vertex $(0\dots 0)$.
Therefore, by Proposition~\ref{PropNbrComponent}, the number of connected components of $\orSw{k}{N}{M}$ is~$1$.
\end{proof}
%
%
%
%
%
\subsection{The group $\lamp_k$ and its action on the $k$-regular rooted tree}\label{SubSecActionLk}
We will use the language of actions on rooted trees, see for instance \cite{MR1841755} and~\cite{MR2162164}.

Fix $k\geq 2$.
The strings over the alphabet $\mathcal A=\{0,1,\dots,k-1\}$ are in one-to-one correspondence with the vertices of the $k$\hyphen regular rooted tree $T_k$ where the root vertex corresponds to the empty string.
Under this correspondence, the $n^\tn{th}$ level of $T_k$ is the set of strings over $\mathcal A$ of length $n$.
The \defi{boundary} $\partial T_k$ of $T_k$ is the set of right infinite strings over $\mathcal A$.
We write $\overline T_k\defegal T_k\cup\partial T_k$.

We also have a one-to-one correspondence between $T_k$ and the ring of polynomials $\Z/k\Z[t]$ given by
\[
	(x_0\dots x_n)\mapsto x_0+x_1t\dots+x_nt^{n}
\]
and a one-to-one correspondence between $\partial T_k$ and the ring of formal series $\Z/k\Z[[t]]$
given by
\begin{equation}\label{eqTwo}
	(x_0x_1x_2\dots)\mapsto \sum_{i\geq 0}x_it^{i}\tag{$\star$}. 
\end{equation}
Let $G\leq \Aut(T_k)$ be a group acting on $T_k$ by automorphisms.
The action is said to be \defi{spherically transitive} if it is transitive on each level.

Extending a result from Grigorchuk and \.Zuk \cite{MR1866850} for $k=2$, Silva and Steinberg showed in \cite{MR2197829} that the lamplighter group $\lamp_k$ introduced in Subsection~\ref{subsectionLamplighter} above acts on $\overline{T}_k$.
They showed that this action is faithful and spherically transitive and described some other interesting properties of this action.
Here, we will look at the action given by
\[
	(x_1x_2x_3\dots)\acts\bar c_r =\bigl((x_1+r)(x_2+x_1)(x_3+x_2)\dots\bigr)
\]
where additions are taken modulo $k$.
This action is slightly different from the one in \cite{MR2197829}, but remains faithful and spherically transitive.

Since the action of $\lamp$ on $T$ is spherically transitive, for any generating set $Y$, for all $N$, the graph of the action on the $N$\textsuperscript{th} level are connected.
Thus, they can be viewed as Schreier graphs $\orSchrei{\lamp}{H_{N}}{Y}$, where $H_{N}=\Stab_{\lamp}(v_N)$, with $v_N$ any vertex of the $N^\tn{th}$ level of $T$.
On the other hand, it is obvious that the graph of the action on $\partial T$ is not connected.
Its connected components correspond to the orbits of the (countable) group $\lamp$ on the (uncountable) set $\partial T$.
They can be viewed as Schreier graphs of the subgroups $\Stab_{\lamp}(\xi)$, $\xi\in\partial T$.
If $v$, $w$ are two vertices of $T$, with $w$ lying on an infinite ray emanating from $v$, then $\Stab_\lamp(w)$ is a subgroup of $\Stab_\lamp(v)$.
This implies that for every $N$, the graph $\orSchrei{\lamp}{H_{N+1}}{Y}$ covers $\orSchrei{\lamp}{H_{N}}{Y}$, and we deduce the following.
\begin{proposition}[\cite{MR3278388},\cite{MR2252901}]\label{PropConv}
For any generating set $Y$ and any $\xi=(x_1x_2\dots)$ ray in $\partial T$,
the sequence of rooted graphs $\bigl(\orSchrei{\lamp}{H_{N}}{Y},(x_1\dots x_n)\bigr)$ converges (as labeled graphs) to 
\[
	\bigl(\orSchrei{\lamp}{\Stab_{\lamp}(\xi)}{Y},\xi\bigr).
\]
\end{proposition}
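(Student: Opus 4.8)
The plan is to realise both sides as Schreier graphs and to exploit the tower of coverings $\vec\Gamma_{N+1}\to\vec\Gamma_N$ discussed just above the statement. Write $\pi_N$ for the truncation of a boundary point to its first $N$ symbols; since $\lamp$ acts by automorphisms of $T_k$, which preserve levels and the prefix relation, the map $\pi_N$ is $\lamp$-equivariant, so that the stabilisers form a decreasing chain $\Stab_\lamp(\pi_N\xi)\supseteq\Stab_\lamp(\pi_{N+1}\xi)$ with $\Stab_\lamp(\xi)=\bigcap_N\Stab_\lamp(\pi_N\xi)$ (an element fixes $\xi$ precisely when it fixes every prefix of $\xi$). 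By Notation \ref{notationAction} and the discussion preceding the statement, $\vec\Gamma_N$ is the Schreier graph $\orSchrei{\lamp}{\Stab_\lamp(\pi_N\xi)}{X}$ rooted at $\pi_N\xi=(x_1\dots x_N)$, the component $(\vec\Gamma_\infty,\xi)^0$ is $\orSchrei{\lamp}{\Stab_\lamp(\xi)}{X}$ rooted at $\xi$, and the inclusion $\Stab_\lamp(\xi)\leq\Stab_\lamp(\pi_N\xi)$ yields a root-preserving strong covering $q_N\colon(\vec\Gamma_\infty,\xi)^0\to\vec\Gamma_N$.

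I would then fix a radius $r$ and show that $q_N$ restricts to a strong isomorphism between the balls of radius $r$ at the two roots once $N$ is large. Because $q_N$ is a covering it is automatically a surjective local isomorphism, so it identifies the two $r$-balls as soon as it is injective on the ball of radius $r$ around $\xi$. By the usual covering-space criterion, injectivity on this ball is equivalent to the statement that every closed path at $\pi_N\xi$ in $\vec\Gamma_N$ of length at most $2r$ lifts to a closed path at $\xi$; reading off labels in $X\sqcup X^{-1}$, this says exactly that every element of $\lamp$ of word length at most $2r$ lying in $\Stab_\lamp(\pi_N\xi)$ already lies in $\Stab_\lamp(\xi)$. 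Now the ball of radius $2r$ in $\lamp$ with respect to $X$ is finite, and for each $g$ in it with $g\notin\Stab_\lamp(\xi)$ the equality $\Stab_\lamp(\xi)=\bigcap_N\Stab_\lamp(\pi_N\xi)$ provides an index $N(g)$ with $g\notin\Stab_\lamp(\pi_{N(g)}\xi)$, hence $g\notin\Stab_\lamp(\pi_N\xi)$ for all $N\geq N(g)$ by monotonicity of the chain. Taking $N_0$ to be the maximum of the finitely many $N(g)$, for every $N\geq N_0$ the only length-$\leq 2r$ elements of $\Stab_\lamp(\pi_N\xi)$ are those already in $\Stab_\lamp(\xi)$, so $q_N$ is an isomorphism on $r$-balls and the distance between $(\vec\Gamma_N,\pi_N\xi)$ and $(\vec\Gamma_\infty,\xi)^0$ is at most $1/(1+r)$. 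Letting $r\to\infty$ gives the convergence.

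The main obstacle is the covering-theoretic step translating ball-isomorphism into the purely group-theoretic condition on the length-$2r$ ball of $\lamp$: I must phrase it for the Serre-style labeled graphs used here, checking that $q_N$ is a covering of the \emph{underlying} non-oriented graphs (each vertex carries exactly one incident edge per label in $X\sqcup X^{-1}$, using that $x\in X$ implies $x^{-1}\notin X$) so that balls are measured correctly, and that a failure of injectivity on the $r$-ball genuinely produces a closed path of length at most $2r$ at $\pi_N\xi$ that does not lift. Once this equivalence is in place, the remaining ingredients---monotonicity and intersection of the stabiliser chain, and finiteness of balls in $\lamp$---are routine. Alternatively, one can phrase the whole argument as the continuity, in the Chabauty topology on subgroups, of the map sending $H\leq\lamp$ to its rooted Schreier graph $\orSchrei{\lamp}{H}{X}$, together with the observation that a decreasing chain of subgroups converges in that topology to its intersection.
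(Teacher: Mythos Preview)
Your argument is correct and is precisely the standard one: realise both graphs as rooted Schreier graphs, use the decreasing chain of stabilisers $\Stab_\lamp(\pi_N\xi)$ with intersection $\Stab_\lamp(\xi)$, and deduce that the resulting coverings $q_N$ become injective on any fixed ball for $N$ large by the finiteness of balls in $(\lamp,X)$. The covering-theoretic translation you flag as the ``main obstacle'' is routine in this labeled setting, since in a Schreier graph every vertex has exactly one outgoing and one incoming edge per generator, so $q_N$ is indeed a covering of the underlying non-oriented graphs and the ball-injectivity criterion reduces exactly to the group-theoretic condition you state.

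As for comparison with the paper: there is essentially nothing to compare against. The paper does not prove this proposition; it cites \cite{LatticeLamplighter} and \cite{Kambit} and merely observes, in the sentence preceding the statement, that $\Stab_\lamp(w)\leq\Stab_\lamp(v)$ when $w$ lies below $v$, hence $\vec\Gamma_{N+1}$ covers $\vec\Gamma_N$, and then writes ``we deduce the following''. Your proof is exactly the natural way to unpack that deduction, so you are supplying the details the paper omits rather than taking a different route.
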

We also have
\begin{proposition}[\cite{MR3278388},\cite{MR2252901}]\label{PropCayley}
For all but countably many $\xi\in\partial T$, the oriented graph $\orSchrei{\lamp}{\Stab_{\lamp}(\xi)}{Y}$ is strongly isomorphic to $\orCayley{\lamp}{Y}$.
\end{proposition}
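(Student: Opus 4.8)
The plan is to show that the set of boundary points with non-trivial stabilizer is countable; since the Schreier graph $\orSchrei{\lamp}{\{1\}}{X}$ of the trivial subgroup is by definition (strongly isomorphic to) $\orCayley{\lamp}{X}$, the proposition follows once we know that $\Stab_\lamp(\xi)=\{1\}$ for all $\xi$ outside a countable set. Writing
\[
E\defegal\setst{\xi\in\partial T}{\Stab_\lamp(\xi)\neq\{1\}}=\bigcup_{g\in\lamp\setminus\{1\}}\mathrm{Fix}(g),
\]
where $\mathrm{Fix}(g)=\setst{\xi}{g\acts\xi=\xi}$, and recalling that $\lamp$ is countable, it suffices to prove that $\mathrm{Fix}(g)$ is finite --- in fact of cardinality at most one --- for every $g\neq 1$.

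First I would rewrite the action in the power-series model $\partial T\iso\Z/k\Z[[t]]$ of \eqref{eqTwo}. A direct computation on the defining formula $\bar c_r\acts(x_1x_2\dots)=\bigl((x_1+r)(x_2+x_1)\dots\bigr)$ gives $\bar c_r\acts\xi=(1+t)\xi+r$. Since each generator $\bar c_r$ maps to $1$ under the projection $\pi\colon\lamp\to\Z$ (killing the base group $\bigoplus_\Z\Z/k\Z$ and sending $b$ to $1$), composing these affine maps shows that every $g\in\lamp$ acts by $g\acts\xi=(1+t)^{\pi(g)}\xi+p(g)$ for a suitable $p(g)\in\Z/k\Z[[t]]$. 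Thus the fixed-point equation $g\acts\xi=\xi$ becomes the linear equation
\[
\bigl((1+t)^{\pi(g)}-1\bigr)\,\xi=-p(g)\qquad\text{in }\Z/k\Z[[t]].
\]

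If $\pi(g)=0$ then $g$ lies in the base group; as the action is faithful and $g\neq1$, the translation vector $p(g)$ is non-zero, so the equation has no solution and $\mathrm{Fix}(g)=\emptyset$. If $\pi(g)\neq0$, replacing $g$ by $g^{-1}$ if necessary I may assume $n\defegal\pi(g)>0$, and the equation reads $Q\xi=-p(g)$ with $Q\defegal(1+t)^n-1$. Its solution set is either empty or a coset of the annihilator $\setst{\xi}{Q\xi=0}$, so the key point is that multiplication by $Q$ is \emph{injective}, i.e.\ that $Q$ is a non-zero-divisor; this forces $\abs{\mathrm{Fix}(g)}\leq1$ and finishes the proof.

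The main obstacle is exactly this injectivity, and it is where the structure of $\Z/k\Z$ for composite $k$ matters: $\Z/k\Z[[t]]$ is not an integral domain, and the lowest-degree coefficient of $Q$ (the first non-zero binomial coefficient $\binom{n}{d}\bmod k$) can itself be a zero-divisor, so one cannot simply compare leading terms. The resolution uses that $Q$ nevertheless has a \emph{unit} coefficient, namely the coefficient $1$ of $t^n$: hence no non-zero constant annihilates $Q$, and by McCoy's theorem over the finite (hence Noetherian) ring $\Z/k\Z$ this already implies that $Q$ is a non-zero-divisor. Alternatively, and self-containedly, one reduces via the Chinese Remainder Theorem to $k=p^e$ and argues by induction on $e$: modulo $p$ the ring $\Z/p\Z[[t]]$ is a domain and the reduction of $Q$ is non-zero, so any $\xi$ with $Q\xi=0$ is divisible by $p$; writing $\xi=p\eta$ and using $pQ\eta=0$ gives $Q\eta\equiv0\pmod{p^{e-1}}$, whence $\eta\equiv0$ by induction and $\xi=0$.
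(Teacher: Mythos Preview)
Your argument is correct and follows the same overall strategy as the paper: pass to the affine description of the action on $\Z/k\Z[[t]]$, reduce the question to the fixed-point equation $\bigl((1+t)^n-1\bigr)\xi=-p(g)$, and then control the number of solutions. Where you diverge is in the treatment of composite $k$. The paper does not prove that $Q=(1+t)^n-1$ is a non-zero-divisor; instead it proves a separate counting lemma (if a polynomial over a finite ring $R$ has invertible top coefficient of degree $d$, then it annihilates at most $|R|^d$ power series), obtaining \emph{finitely many} fixed points per equation, and then invokes the countability of the Laurent ring $L=\Z/k\Z[(1+t)^{\pm1}]$ to conclude. Your route---showing $Q$ is a non-zero-divisor, hence at most \emph{one} fixed point per $g$---is sharper and lets you count directly over $g\in\lamp$ rather than over pairs $(n,Q)$.

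One caveat: the appeal to ``McCoy's theorem'' is not quite on solid ground as stated. McCoy's theorem is a statement about zero-divisors in polynomial rings $R[t]$, and its analogue for power series rings $R[[t]]$ is known to fail in general; even over Noetherian $R$ the situation is delicate and not the standard formulation you seem to invoke. Fortunately your alternative argument---Chinese Remainder reduction to $k=p^e$ followed by induction on $e$, using that $\Z/p\Z[[t]]$ is a domain and that the leading coefficient of $Q$ is $1$---is entirely self-contained and correct, so the proof stands. I would simply drop the McCoy sentence, or rephrase it as a heuristic, and rely on the CRT argument.
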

\begin{proof}
The oriented labeled graph $\bigl(\orSchrei{\lamp}{\Stab_{\lamp}(\xi)}{Y},\xi\bigr)$ is strongly isomorphic to $\orCayley{\lamp}{Y}$ if and only if $\Stab_{\lamp}(\xi)=\{1\}$.
We will show that $\Stab_{\lamp}(\xi)\neq\{1\}$ for only countably many $\xi$.
In order to prove that, we look at the equivalent action of $\lamp$ on $\Z/k\Z[[t]]$.
Applying formula \eqref{eqTwo} we set for any $F\in\Z/k\Z[[t]]$
\begin{align*}
	F\acts c&=F+1\\
	F\acts b&=(1+t)F.
\end{align*}
Hence for any $i$ we have
\[
	F\acts b^{-i} c b^{i}=F+(1+t)^i.
\]
Let $1\neq g$ be an element in $\lamp$.
Then $g$ admits a unique decomposition as $g=b^ih$ for some $i\in\Z$ and $h\in \A=\langle \{b^jcb^{-j}\}_{j\in Z}\rangle$.
Therefore, there exists $P$ a finite sum of $(1+t)^j$'s, $j\in \Z$, such that for any $F\in\Z/k\Z[[t]]$,
\begin{align*}
	F\acts g&=(1+t)^iF+P&\tn{and}\\
	F\acts g^{-1}&=(1+t)^{-i}F-(1+t)^{-i}P.
\end{align*}
Since the action is faithful (and $g\neq 1$), it is not possible that $i=0$ and $P=0$ together.
Now, suppose $F$ has non-trivial stabilizer.
Then there exists $1\neq g\in \lamp$ such that $F=F\acts g=F\acts g^{-1}$ and therefore $F$ is a solution to the equations
\begin{align*}
	\bigl((1+t)^i-1\bigr)x&=-P\\
	\bigl((1+t)^{-i}-1\bigr)x&=(1+t)^{-i}P.
\end{align*}
We have $i\neq 0$, otherwise we would have $P=0$, which is absurd.
Hence, $F$ is a solution of
\[\label{eqF}
	\bigl((1+t)^i-1\bigr)x=Q,\tag{$\ast$}
\]
with $i>0$ and $Q$ belonging to $L\defegal \Z/k\Z[(1+t),(1+t)^{-1}]$, the subring of $\Z/k\Z[[t]]$ consisting of Laurent polynomials in $1+t$.
Note that $(1+t)^i-1$ is not invertible and thus we cannot write $x=Q/\bigl((1+t)^i-1\bigr)$.

Suppose for a moment that $k$ is prime.
This is equivalent to $\Z/k\Z[[t]]$ being an integral domain.
In this case, given $a\neq 0$ and $b$ in $\Z/k\Z[[t]]$, the equation $ax=b$ has at most one solution.
Now, if $F$ has a non-trivial stabilizer, we had just proved that it satisfies equation \eqref{eqF}.
Since $L$ is countable, there are only countably many equations of this form and, by unicity of solution, countably many solutions of such equation and hence countably many $F$ with non-trivial stabilizer.

If $k$ is not prime, we do not have the unicity of solution of equations in $\Z/k\Z[[t]]$.
For example, for $k=6$ the equation $2t\cdot x=0$ admits uncountably many different solutions (all series $x$ where all coefficients belongs to $\{0,3\}$).
But, in our special case, we claim that \eqref{eqF} has only finitely many solutions.
Using that, we have again that the number of series $F$ with non-trivial stabilizer is countable.

We now prove the claim.
If equation \eqref{eqF} has no solution, then the claim is true.
If there is as at least one solution, the solutions of \eqref{eqF} are in one-to-one correspondence with the solutions of 
\[
	\bigl((1+t)^i-1\bigr)x=0.
\]
But such an equation has only finitely many solutions by the following lemma.
\end{proof}
\begin{lemma}
Let $R$ be a finite ring and $P=\sum_{j=0}^dp_jt^j\in R[t]$ a polynomial.
Then, in $R[[t]]$, the equation $Px=0$ has
\begin{enumerate}
\item only one solution ($x=0$) if the first non-zero coefficient of $P$ does not divide $0$;
\item at most ${\abs R}^d$ solutions if $p_d$ is invertible.
\end{enumerate}
\end{lemma}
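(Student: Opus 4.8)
The plan is to treat the two parts separately: the first by a direct factorization, the second by a finite-state argument based on the recurrence coming from $p_d$ being invertible.

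For part (1), let $m$ be the index of the first non-zero coefficient, so that $P=t^mQ$ with $Q=p_m+p_{m+1}t+\dots+p_dt^{d-m}$ having invertible constant term $p_m$. A power series with invertible constant term is a unit of $R[[t]]$ (its inverse is built coefficient by coefficient), so $Q$ is invertible. Moreover, multiplication by $t^m$ is injective on $R[[t]]$, since it merely shifts all coefficients up by $m$. Hence $Px=t^m(Qx)=0$ forces $Qx=0$, and then $x=Q^{-1}(Qx)=0$. The only thing I would spell out here is the standard fact that an invertible constant term makes $Q$ a unit.

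For part (2), I would expand $Px=0$ into its coefficient equations. Writing $x=\sum_{i\geq 0}x_it^i$, the vanishing of the coefficient of $t^{m+d}$ reads $\sum_{j=0}^dp_jx_{m+d-j}=0$ for every $m\geq 0$. Since $p_d$ is invertible, this can be solved for the lowest-indexed term, giving $x_m=-p_d^{-1}\bigl(p_0x_{m+d}+p_1x_{m+d-1}+\dots+p_{d-1}x_{m+1}\bigr)$ for all $m\geq 0$. Thus each coefficient is an explicit $R$\hyphen linear function of the $d$ coefficients that follow it. I encode this by the \emph{state} $S_m\defegal(x_m,x_{m+1},\dots,x_{m+d-1})\in R^d$ and the map $T\colon R^d\to R^d$, $T(a_1,\dots,a_d)=\bigl(-p_d^{-1}(p_0a_d+\dots+p_{d-1}a_1),\,a_1,\dots,a_{d-1}\bigr)$, so that every solution satisfies $S_m=T(S_{m+1})$ for all $m$.

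The main obstacle, and the reason one cannot simply argue that the first $d$ coefficients determine the solution, is that the recurrence runs \emph{backwards}: it expresses early coefficients in terms of later ones, and since $p_0$ need not be invertible there is no forward recurrence producing $x_{m+d}$ from $x_0,\dots,x_{m+d-1}$. I would resolve this using finiteness of $R^d$. For the self-map $T$ of the finite set $R^d$, the decreasing chain of images $T(R^d)\supseteq T^2(R^d)\supseteq\dots$ stabilizes to a set $E$ on which $T$ restricts to a bijection. From $S_m=T^n(S_{m+n})$ one gets $S_m\in E$ for every $m$, and then $S_{m+1}=(T|_E)^{-1}(S_m)$, so the whole sequence $(S_m)$, and hence $x$, is determined by $S_0\in E$. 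Therefore the map $x\mapsto(x_0,\dots,x_{d-1})$ is injective on the solution set, which yields at most $\abs{E}\leq\abs{R}^d$ solutions. I expect the only delicate points to be the elementary verification that a self-map of a finite set is eventually bijective on its stabilized image, and keeping track that the low-degree equations (those with $m+d$ replaced by $n<d$) only further constrain the solutions and so do not affect the upper bound.
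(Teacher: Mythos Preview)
Your proof is correct. Part~(1) is exactly the argument the paper leaves implicit when it declares the statement ``trivially true.'' For part~(2) you and the paper start from the same backward recurrence $x_j=-p_d^{-1}\sum_{k=1}^{d}p_{d-k}\,x_{j+k}$, but you finish differently: the paper argues by contradiction, choosing $\abs{R}^d+1$ distinct solutions, truncating them all at some large degree~$l$ where they are still pairwise distinct, and then observing that the top $d$ coefficients of each truncation determine all lower ones via the recurrence, a pigeonhole contradiction. Your stabilized-image argument is a direct version of the same finiteness idea and even yields the slightly sharper bound $\abs{E}\leq\abs{R}^d$; in exchange, the paper's truncation trick avoids having to invoke the (elementary) fact that a self-map of a finite set is bijective on its eventual image.
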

\begin{proof}
The first statement is trivially true.

The proof of the second statement is by contradiction.
Observe that if $x=\sum_{n\geq 0} x_nt^n$ is a solution, we have for all $n$
\[
	x_n=-p_d^{-1}\sum_{k=1}^{d} p_{d-k}x_{n+k}.
\]
Now, suppose that the equation $Px=0$ has more than ${\abs R}^d$ solutions.
Choose ${\abs R}^d+1$ different solutions $y_i$.
There exists an integer $l$ such that all the $\bar y_i$ differ, where $\bar x=\sum_{n=0}^lx_nt^n$ is the series $x$ up to degree $l$.
We hence have ${\abs R}^d+1$ distinct polynomials of degree at most $l$, all satisfying the identities above.
But this is not possible.
Indeed, we have at most ${\abs R}^d$ different choices for the coefficients of $t^l$ to $t^{l-d+1}$ and all other coefficients are uniquely determined by these ones.
\end{proof}
Until here, all the results of this subsection were true for any generating set $Y$ of $\lamp_k$.
In the following, we will work with our usual generating set $X_k=\{\bar c_i\}_{i=0}^{k-1}$.
\begin{notation}\label{notationAction}
Denote by $\vec\Gamma_k$ the oriented labeled graph of the action of $\lamp_k$ on $\overline T_k$, with respect to the generating set $X_k$.
If we restrict this action to the $N^\tn{th}$ level of $T_k$, the corresponding oriented labeled graph of the action will be denoted by $\vec\Gamma_{k,N}$.
The graph corresponding to the restriction of the action to $\partial T_k$ will be denoted by $\vec\Gamma_{k,\infty}$.
\end{notation}
\begin{figure}[ht]
\centering
\includegraphics{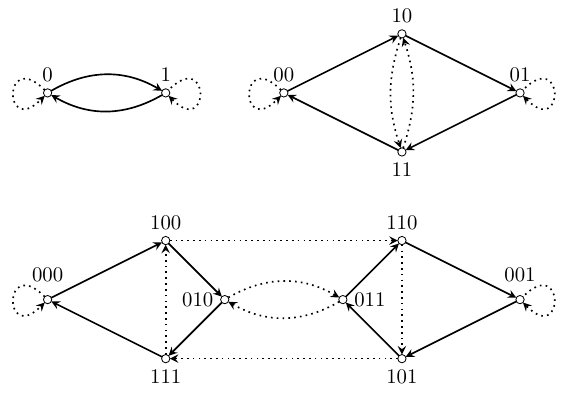}
\caption{The graphs $\vec\Gamma_{2,N}$ for $N\in\{1,2,3\}$. Solid edges correspond to edges with label $\bar c_1$ and dotted edges to edges with label $\bar c_0$.}
\label{SchreierGraphs}
\end{figure}
As with spider-web graphs and de Bruijn graphs, from now on we omit $k$ from our notation and write simply $\lamp$, $T$, $X$, $\vec\Gamma$, $\vec\Gamma_N$ and $\vec\Gamma_\infty$.

The following proposition will, together with Lemma~\ref{LemmaLineDeBruijn}, help to establish a connection between the lamplighter group and the de Bruijn graphs.
\begin{proposition}\label{PropLineAction}
For every $N\in\Nstar$, the graph $\vec\Gamma_{N+1}$ is (weakly) isomorphic to the line graph of $\vec\Gamma_{N}$.
\end{proposition}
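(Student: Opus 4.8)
The plan is to mimic the proof of Lemma~\ref{LemmaLineDeBruijn} and exhibit an explicit weak isomorphism $\phi\colon L(\vec\Gamma_N)\to\vec\Gamma_{N+1}$. By Definition~\ref{defLine}, the vertices of $L(\vec\Gamma_N)$ are the edges of $\vec\Gamma_N$; since $\vec\Gamma_N$ is the graph of the action of $\lamp$ on the $N^{\tn{th}}$ level of $T$, these edges are exactly the pairs $(v,\bar c_r)$ with $v=x_1\dots x_N$ a level-$N$ vertex and $0\le r<k$, the edge $(v,\bar c_r)$ running from $v$ to $\bar c_r\acts v$. There is an edge in $L(\vec\Gamma_N)$ from $(v,\bar c_r)$ to $(w,\bar c_s)$ precisely when $w=\bar c_r\acts v$, and as $s$ ranges over $\{0,\dots,k-1\}$ these are the $k$ outgoing edges of $(v,\bar c_r)$.

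The crucial point is to choose the right bijection. In the de Bruijn case the label is \emph{appended}, because the maps $R_y$ shift a string to the left; here the action $\bar c_r\acts(x_1x_2\dots)=\bigl((x_1+r)(x_2+x_1)\dots\bigr)$ is ``triangular'', altering the first coordinate by adding $r$, so instead I would \emph{prepend} the label. Concretely, set $\phi(v,\bar c_r)\defegal r\,x_1\dots x_N$, a string of length $N+1$. This is a bijection onto the vertex set of $\vec\Gamma_{N+1}$, with inverse $a_1a_2\dots a_{N+1}\mapsto(a_2\dots a_{N+1},\bar c_{a_1})$.

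It then remains to check that $\phi$ respects edges. Writing $u=\phi(v,\bar c_r)=r\,x_1\dots x_N$ and applying the action formula, the edge of $\vec\Gamma_{N+1}$ labeled $\bar c_t$ leaving $u$ has end vertex $(r+t)(x_1+r)(x_2+x_1)\dots(x_N+x_{N-1})$. Its last $N$ letters are exactly $\bar c_r\acts v=w$ and are independent of $t$, while its first letter is $r+t$; hence this end vertex equals $\phi(w,\bar c_{r+t})$. Thus the substitution $s=r+t$ sets up a bijection between the $k$ outgoing edges of $u$ in $\vec\Gamma_{N+1}$ and the $k$ outgoing edges of $(v,\bar c_r)$ in $L(\vec\Gamma_N)$, compatibly with $\phi$ on vertices. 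Since $\phi$ is a bijection on vertices and this correspondence preserves incidence, $\phi$ is a bijection on edges as well, hence a weak isomorphism.

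The computation is routine once the map is found, so the only real obstacle is spotting that one must prepend (not append) the label; the resulting shift $s=r+t$ between the two labelings is also precisely what prevents the isomorphism from being strong, consistent with the statement claiming only a weak isomorphism. The same formula covers the base case $N=0$, where $\vec\Gamma_0$ is the $k$-petal rose and both $L(\vec\Gamma_0)$ and $\vec\Gamma_1$ are the complete oriented graph on $k$ vertices with a loop at each.
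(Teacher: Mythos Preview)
Your proof is correct and essentially identical to the paper's own argument: the paper defines the very same vertex bijection $\phi\bigl((x_1\dots x_N,\bar c_i)\bigr)=ix_1\dots x_N$ (prepending the label) and sends the line-graph edge from $(v,\bar c_i)$ to $(\bar c_i\acts v,\bar c_j)$ to the edge of $\vec\Gamma_{N+1}$ with initial vertex $ix_1\dots x_N$ and label $\bar c_{j-i}$, which is exactly your substitution $s=r+t$ read the other way. Your remark that this label shift is what obstructs a strong isomorphism is also in keeping with the paper's point of view.
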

\begin{proof}
First, we have that $\vec\Gamma_0$ is the rose with $k$ loops and $\vec\Gamma_1$ is the complete oriented graph (with loops) on $k$ vertices.
Therefore $\vec\Gamma_1$ is weakly isomorphic to the line graph of $\vec\Gamma_0$.

We have that the set of vertices in the line graph of $\vec\Gamma_N$ is in bijection with the set of couples 
\[
\setst{(v,\bar c_i)}{v \tn{ a vertex in }\vec\Gamma_N, 0\leq i<k}.
\]
Let $v=(x_1\dots x_N)$ be a vertex in $\vec\Gamma_N$. If $N\geq 1$, there is an edge in the line graph from $(v,\bar c_i)$ to $(w,\bar c_j)$ if and only if $w=v\acts \bar c_i=\bigl((x_1+i)(x_2+x_1)\dots (x_N+x_{N-1})\bigr)$.

We construct  now an explicit weak isomorphism $\phi$ from the line graph of $\vec\Gamma_N$ to $\vec\Gamma_{N+1}$, see Figure \ref{HomoGamma}.
We define $\phi$ on the vertices by $\phi\bigl((x_1\dots x_N,\bar c_i)\bigr)\defegal(ix_1\dots x_N)$.
It is easy to see that $\phi$ is injective (and hence bijective) on vertices.
If $N\geq 1$, there is a unique edge in the line graph from $\bigl((x_1\dots x_N),\bar c_i\bigr)$ to $\bigl((x_1+i)(x_2+x_1)\dots (x_N+x_{N-1}),\bar c_j\bigr)$ (and all edges are of this form). Let the image of this edge by $\phi$ be the unique edge in $\vec\Gamma_{N+1}$ with initial vertex $(ix_1\dots x_N)$ and label $\bar c_{j-i}$ --- see Figure~\ref{HomoGamma}.
It is straightforward to see that $\phi$ is injective (and thus bijective) on the set of edges.
Moreover, by definition, $\phi\bigl(\iota(e)\bigr)=\iota\bigl(\phi(e)\bigr)$ for any edge $e$ in the line graph.
Hence, to show that $\phi$ is a weak isomorphism it only remains to check that $\phi\bigl(\tau(e)\bigr)=\tau\bigl(\phi(e)\bigr)$.
If $e$ is an edge from $\bigl((x_1\dots x_N),\bar c_i\bigr)$ to $\bigl((x_1+i)(x_2+x_1)\dots (x_N+x_{N-1}),\bar c_j\bigr)$, we have 
\[
	\phi\bigl(\tau(e)\bigr)=\bigl(j(x_1+i)(x_2+x_1)\dots (x_N+x_{N-1})\bigr).
\]
On the other hand, $\phi(e)$ has initial vertex $(ix_1\dots x_N)$ and label $\bar c_{j-i}$.
Therefore,
\[
	\tau\bigl(\phi(e)\bigr)=\bigl((i+j-i)(x_1+i)(x_2+x_1)\dots (x_N+x_{N-1})\bigr)=\phi\bigl(\tau(e)\bigr).
\]
\end{proof}
\begin{figure}[ht]
\centering
\includegraphics{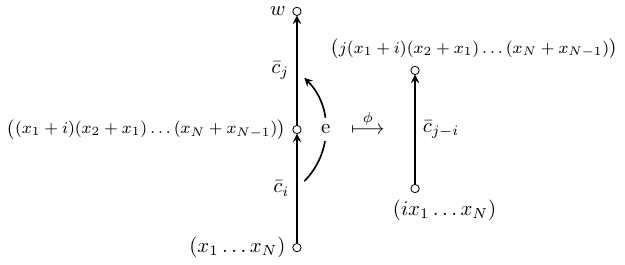}
\caption{The edge $e$ in the line graph of $\vec\Gamma_N$ and its image by $\phi$.}
\label{HomoGamma}
\end{figure}
%
%
%
%
%
\subsection{Convergence of the spider-web graphs to the Cayley graph of the lamplighter group}\label{sectionSpiderLamplighter}
In this subsection, we use results of the last two subsections to finally establish a link between the spider-web graphs $\orkSw{k}{N}{M}$ and the graph $\vec\Gamma_k$ of the action of the lamplighter group $\lamp_k$ on the $k$-regular tree $\overline T_k$ and to prove our main results.
\begin{theorem}\label{ThmWeakIsomorphism}
Let $k\geq 2$.
For all $N\in\Nstar$, the de Bruijn graph $\orB_{k,N}$ is weakly isomorphic to $\vec\Gamma_{k,N}$, the graph of the action of $\lamp_k$ on the $N^\tn{th}$ level of $T_k$, with respect to the generating set $X_k=\{\bar c_r\}_{r=0}^{k-1}$ (see \eqref{presentation2} on page \pageref{presentation2}).
\end{theorem}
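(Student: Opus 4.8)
The plan is to prove the weak isomorphism $\orB_{k,N}\iso\vec\Gamma_{k,N}$ by induction on $N$, using the two line-graph characterizations already established: Lemma \ref{LemmaLineDeBruijn} says $\orB_{N+1}$ is the line graph of $\orB_N$, and Proposition \ref{PropLineAction} says $\vec\Gamma_{N+1}$ is the line graph of $\vec\Gamma_N$. Since the line graph construction is functorial under weak isomorphism (a weak isomorphism $\orB_N\to\vec\Gamma_N$ induces a weak isomorphism of line graphs $L(\orB_N)\to L(\vec\Gamma_N)$, because $L$ depends only on the oriented-graph structure), the inductive step is immediate once the base case is settled.

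For the base case, I would compare $\orB_{0}$ and $\vec\Gamma_{0}$ (or equivalently $\orB_1$ and $\vec\Gamma_1$). As noted in the excerpt, $\orB_0$ is the rose with $k$ oriented loops and $\orB_1$ is the complete oriented graph on $k$ vertices with all loops; similarly $\vec\Gamma_0$ is the rose with $k$ loops and $\vec\Gamma_1$ is the complete oriented graph (with loops) on $k$ vertices. Both roses have a single vertex and $k$ oriented edges, so $\orB_0$ and $\vec\Gamma_0$ are trivially weakly isomorphic. Applying $L$ and invoking the two line-graph lemmas then starts the induction at $N=1$, and one more invocation handles $N=0$ directly since both are roses.

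\textbf{Writing the induction cleanly.} Assume $\orB_{N}$ is weakly isomorphic to $\vec\Gamma_{N}$ via some weak isomorphism $\psi$. Then $L(\orB_N)$ is weakly isomorphic to $L(\vec\Gamma_N)$ via the induced map $L(\psi)$, which sends a vertex $e$ (an edge of $\orB_N$) to $\psi(e)$ and an edge $(e,f)$ of $L(\orB_N)$ to $(\psi(e),\psi(f))$; this is a bijection on vertices and edges preserving $\iota,\tau$ precisely because $\psi$ does. Composing with the isomorphisms $L(\orB_N)\iso\orB_{N+1}$ from Lemma \ref{LemmaLineDeBruijn} and $L(\vec\Gamma_N)\iso\vec\Gamma_{N+1}$ from Proposition \ref{PropLineAction} yields the weak isomorphism $\orB_{N+1}\iso\vec\Gamma_{N+1}$, completing the step.

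\textbf{Where the difficulty lies.} The conceptual content has already been front-loaded into the two line-graph results, so the main statement is essentially a formal consequence and the expected obstacle is purely bookkeeping: one must verify that $L$ genuinely acts functorially on weak isomorphisms of oriented graphs, i.e.\ that the induced vertex-and-edge map $L(\psi)$ is again a bijective morphism of oriented graphs. This is routine but should be stated, since the two cited lemmas give isomorphisms to $L(-)$ rather than a direct recursion, and the proof must glue these together correctly. No new combinatorial identity about the lamplighter action is needed beyond what Proposition \ref{PropLineAction} already encodes; the spherical transitivity and the explicit formula for the $\lamp_k$-action enter only through that proposition.
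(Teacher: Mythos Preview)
Your proof is correct and follows essentially the same approach as the paper: induction on $N$ with base case the rose on $k$ loops, and the inductive step via Lemma \ref{LemmaLineDeBruijn} and Proposition \ref{PropLineAction} together with the observation that the line-graph construction respects weak isomorphisms. The paper phrases the functoriality of $L$ more tersely (``the line graph does not depend on the labeling''), but the content is identical.
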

\begin{proof}
The proof goes by induction on $N$.
For $N=0$, both graphs are weakly isomorphic to the rose with $k$ loops.
For $N\geq 1$ we have by Lemma~\ref{LemmaLineDeBruijn} that $\orB_{k,N}$ is weakly isomorphic to the line graph of $\orB_{k,N-1}$.
Since $\orB_{k,N-1}$ is (by induction) weakly isomorphic to $\vec\Gamma_{k,N-1}$ and that the line graph does not depend on the labeling we have a weak isomorphism between $\orB_{k,N}$ and the line graph of $(\vec\Gamma_k)_{N-1}$.
By Proposition~\ref{PropLineAction}, this graph is itself weakly isomorphic to $(\vec\Gamma_k)_N$.
\end{proof}
\begin{remark}
These graphs are not strongly isomorphic.
Indeed, in $\orB_{k,N}$ for every vertex $v$, all edges ending at $v$ have the same label, but in a Schreier graph two edges having the same end-vertex have distinct labels. (See Figures~\ref{BruijnGraphs} and~\ref{SchreierGraphs}.)
\end{remark}
Theorem~\ref{ThmWeakIsomorphism} and the fact that $\vec\Gamma_{k,N}$ covers $\vec\Gamma_{k,N-1}$ (see the discussion before Proposition~\ref{PropConv}),  imply the following property of de Bruijn graphs that, as far we know, was not observed before.
\begin{corollary}\label{corCovering}
For all $N\in\N$ the graph $\orB_N$ (weakly) covers $\orB_{N-1}$.
\end{corollary}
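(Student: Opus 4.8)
The plan is to transport the statement into the Schreier-graph picture, where the required covering is already available, and then pull it back through the weak isomorphisms of Theorem~\ref{ThmWeakIsomorphism}. By that theorem there are weak isomorphisms $\orB_{k,N}\iso\vec\Gamma_{k,N}$ and $\orB_{k,N-1}\iso\vec\Gamma_{k,N-1}$, so it suffices to produce a weak covering $\vec\Gamma_{k,N}\to\vec\Gamma_{k,N-1}$ and then pre- and post-compose with these two isomorphisms.

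Such a covering is exactly what is recorded in the discussion just before Proposition~\ref{PropConv}. The graphs $\vec\Gamma_{k,N}$ are the Schreier graphs $\orSchrei{\lamp}{H_N}{X}$ with $H_N=\Stab_\lamp(v_N)$, and if a level-$N$ vertex $w$ lies on an infinite ray through a level-$(N-1)$ vertex $v$, then $\Stab_\lamp(w)\leq\Stab_\lamp(v)$. First I would invoke the elementary fact that $H\le K\le G$ yields a covering $\orSchrei{G}{H}{X}\to\orSchrei{G}{K}{X}$ via $Hg\mapsto Kg$: this map is well defined since $H\le K$, and it induces a bijection of stars because at any coset the outgoing edges are indexed bijectively by $X$, compatibly upstairs and downstairs. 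This gives the covering $\vec\Gamma_{k,N}\to\vec\Gamma_{k,N-1}$, i.e.\ the deeper level covers the shallower one.

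It then remains to compose, which I would justify by noting that a weak isomorphism is in particular a weak covering and that coverings of oriented graphs compose (the induced star maps compose to bijections). Hence the chain
\[
\orB_{k,N}\;\xrightarrow{\ \iso\ }\;\vec\Gamma_{k,N}\;\xrightarrow{\ \text{cov}\ }\;\vec\Gamma_{k,N-1}\;\xrightarrow{\ \iso\ }\;\orB_{k,N-1}
\]
is a weak covering, which is the assertion. For the base case $N=1$ one checks directly that the complete oriented graph on $k$ vertices with loops ($\orB_1$) covers the rose with $k$ loops ($\orB_0$).

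The main difficulty here is organizational rather than computational: one must track the direction of the covering carefully (the finer stabilizer sits at level $N$, so $\vec\Gamma_{k,N}$ must be the cover and not the base), and one must only claim a \emph{weak} covering, since the Remark following Theorem~\ref{ThmWeakIsomorphism} shows that the de Bruijn graphs and the Schreier graphs fail to be strongly isomorphic. As a sanity check, I note there is also a direct proof bypassing the group: the first-letter deletion $\orB_{k,N}\to\orB_{k,N-1}$, $x_1x_2\cdots x_N\mapsto x_2\cdots x_N$, sends the $R_y$-edge out of $x_1\cdots x_N$ to the $R_y$-edge out of $x_2\cdots x_N$ and hence induces a bijection on each star; however, routing the argument through the structural results keeps the proof short and reuses work already done.
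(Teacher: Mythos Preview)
Your proof is correct and follows exactly the approach the paper intends: the corollary is stated immediately after Theorem~\ref{ThmWeakIsomorphism} with the remark that it follows from that theorem together with the fact, recorded before Proposition~\ref{PropConv}, that $\vec\Gamma_{k,N}$ covers $\vec\Gamma_{k,N-1}$. Your write-up simply fleshes out this one-line deduction (including the standard fact that $H\le K$ yields a Schreier covering and that coverings compose with isomorphisms), so there is nothing to add.
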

We are now able to prove our main theorem.
\begin{theorem}\label{ThmConvergenceM1}
Let $k\geq 2$.
Recall that $\lamp_k$ denotes the lamplighter group $(\Z/k\Z)\wr\Z$ and $X_k=\{{\bar c_r}\}_{r=0}^{k-1}$ is the generating system given by \eqref{presentation2}, see page~\pageref{presentation2}.
\begin{enumerate}
\item
The unlabeled oriented de Bruijn graphs $\orB_{k,N}$ converge to $\orCayley{\lamp_k}{X_k}$ in the sense of Benjamini-Schramm convergence (see Definition~\ref{DefBSConv}).
\item
The following diagram commutes, where the arrows stand for Benjamini-Schramm convergence of unlabeled graphs.

\centering
\includegraphics{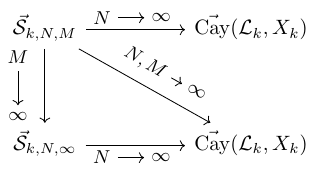}
\end{enumerate}
\end{theorem}
\begin{proof}
By Proposition~\ref{PropCayley}, for all but countably many $\xi=(x_1x_2\dots)$ in $\partial T_k$, the oriented graph $(\vec\Gamma_{k,\infty},\xi)^0$ is strongly isomorphic to the oriented Cayley graph  $\orCayley{\lamp_k}{X_k}$.
On the other hand, by Proposition~\ref{PropConv}, the graphs $\bigl(\vec\Gamma_{k,N},(x_1\dots x_N)\bigr)$ strongly converge to $(\vec\Gamma_{k,\infty},\xi)^0$.
Theorem~\ref{ThmWeakIsomorphism} gives us a weak isomorphism between $\orB_{k,N}$ and $\vec\Gamma_{k,N}$.
Therefore, $\orB_{k,N}$ converge to $\orCayley{\lamp_k}{X_k}$.
This ends the proof of the first part of the theorem.

In order to prove the second part of the theorem, we should consider an auxiliary diagram, see Figure~\ref{LimiteSpiderPreuve}.
First note that we already know that when $N\to\infty$ de Bruijn graphs $(\orB_{k,N},v_{N})$ weakly converge to the Cayley graph $(\orCayley{\lamp_k}{X_k},1_{\lamp_k})$ for nearly all choices of the $v_{N}$ and that it is obvious that $(\vec C_M,0)$ weakly converge to $(\vec\Z,0)$ when $M\to \infty$.
Hence, by Theorem~\ref{ThmLimit}, the
 diagram in Figure
\ref{LimiteSpiderPreuve}
is commutative.
Finally, since this statement is true for nearly all choices of roots, we have the convergence in the sense of Benjamini-Schramm when we choose the roots uniformly.
\begin{figure}[ht]
\centering
\includegraphics{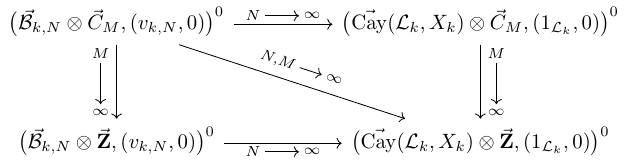}
\caption{Limit of de Bruijn graphs}
\label{LimiteSpiderPreuve}
\end{figure}
By Proposition~\ref{PropPresentation2}, for every $M\in\barN$, any connected component of $\orCayley{\lamp_k}{X_k}\otimes\vec C_M$ is strongly isomorphic to $\orCayley{\lamp_k}{X_k}$; in particular, this does not depend on $M$.
On the other hand, for all $M\in\barN$, $\orB_{k,N}\otimes\vec C_M$ is strongly isomorphic to $\orkSw{k}{N}{M}$ by Corollary~\ref{corIsoSwB}, and therefore is connected.
\end{proof}
Using Proposition~\ref{ConvergenceOrientedVsNonOriented}, we obtain the same result for non-oriented versions of de Bruijn and spider-web graphs as an immediate corollary.
\begin{corollary}~\label{CorConvergenceUnoriented}
\begin{enumerate}
\item
The unlabeled de Bruijn graphs $\unB_{k,N}$ converge to $\unCayley{\lamp_k}{X_k}$ in the sense of Benjamini-Schramm convergence (see Definition~\ref{DefBSConv}).
\item
The following diagram commutes, where the arrows stand for Benjamini-Schramm convergence of unlabeled graphs.

\centering
\includegraphics{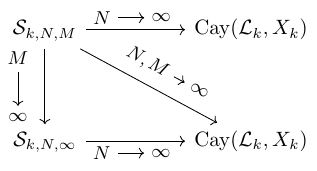}
\end{enumerate}
\end{corollary}
\begin{remark}
Remark~\ref{RmkLamplighter} implies that in Theorem~\ref{ThmConvergenceM1} and its Corollary~\ref{CorConvergenceUnoriented}, $\lamp_k$ can be replaced by any wreath product $G\wr \Z$, where $G$ is a finite group of cardinality $k$.
However, Theorem~\ref{ThmWeakIsomorphism} does not hold necessarily if $G$ is not abelian.
\end{remark}
%
%
%
%
%
%
%
\section{Computation of spectra}\label{sectionSpectra}
In this section we compute the characteristic polynomial and  the spectrum of the adjacency matrix of $\unSw{k}{N}{M}$ for all $M,N\in \N$.
The spectra of the graphs $\{\Gamma_N\}$ of the action of $\lamp_2$ on the levels of the binary rooted tree were first computed by Grigorchuk and \.Zuk in \cite{MR1866850} using the fact that they form a tower of coverings, see the discussion just before Proposition~\ref{PropConv}.
(Note that the multiplicity in their formula is not completely correct -- compare with our Theorem~\ref{thmMultiplicity} below.)
These computations were extended to any wreath product $G\wr \Z$, with $G\neq\{1\}$ finite, by Kambites, Silva and Steinberg in \cite{MR2252901}, using automata theory.
Dicks and Schick \cite{MR1934693} computed the spectral measures for random walks on $G\wr \Z$ using entirely different methods (see also \cite{MR2131635}).

On the other hand, Delorme and Tillich computed the spectra of $\orB_N$ in \cite{MR1621013} using simple matrix transformations.
It is well known that for any pair of square matrices $A$ and $B$ with respective eigenvalues $(\lambda_i)_{i=1}^n$ and $(\mu_j)_{j=1}^m$, the spectrum of $A\otimes B$ is $\setst{\lambda_i\mu_j}{1\leq i\leq n,1\leq j\leq m}$.
However, this formula cannot be applied in our case since we want to compute the spectrum of $\underline{\orB_N\otimes\vec C_M}$ and there is no formula relating eigenvalues of a matrix $A$ (the adjacency matrix of an oriented graph) and of its symmetrized matrix $A+A^*$ (the adjacency matrix of the underlying graph).
Instead we generalize Delorme and Tillich method directly to all $\unSw{k}{N}{M}$.

For $A$ a matrix, we denote its characteristic polynomial by $\charac(A)$.
For a graph (oriented or not, etc.) $\Gamma$, the \defi{characteristic polynomial} $\charac(\Gamma)$ of $\Gamma$ is the characteristic polynomial of its adjacency matrix.
The following lemma summarizes discussions 2.(1), 2.(2) and 2.(3) from~\cite{MR1621013}.
\begin{lemma2}\label{lemmaCharacPoly}
Let $\vec\Gamma$ be an oriented graph, $\underline{\vec\Gamma}$ the underlying non-oriented graph and $\vec A$ and $A$ their respective adjacency matrices.
Suppose that there exist complex matrices $D$ and $U$ with $U$ unitary such that $\vec A=U^*DU$.
Then $\charac(\vec \Gamma)=\charac(D)$.
Moreover, $A=U^*(D+D^*)U$ and $\charac(\underline{\vec\Gamma})=\charac(D+D^*)$.
\end{lemma2}
\begin{proof}
We have $A=\vec A+(\vec A)^*=U^*DU+U^*D^*U=U^*(D+D^*)U$.
On the other hand, $A$ and $D$ are equivalent matrices and therefore have the same characteristic polynomial.
\end{proof}
Observe that for a given complex matrix $D$ we can construct an oriented weighted graph $\vec\Theta$ with adjacency matrix $D$, where a \defi{weighted graph} is a graph $\Gamma=(V,E)$ (oriented or not, labeled or not, etc.) with a map $w\colon E\to\field C$ which assign to each edge a complex number (a weight) such that $w(\bar e)$ is the complex conjugate of $w(e)$.
In this case, $D+D^*$ is the adjacency matrix of $\underline{\vec\Theta}$.

In their article, Delorme and Tillich use this to compute the spectrum of de Bruijn graphs $\unB_{N}$ and prove the following.
\begin{proposition2}[Dellorme-Tillich]\label{PropSpecBruijn}
For all $N\in\Nstar$, let $\vec\Theta_{k,N}$ be the weighted oriented graph which is the disjoint union of
\begin{enumerate}
\item one oriented loop,
\item for all $0\leq i\leq N-2$, $(k-1)^2k^{N-i-2}$ disjoint oriented paths of length $i$,
\item $k-1$ disjoint oriented paths of length $N-1$;
\end{enumerate}
where all edges have weight $k$ and an oriented path of length $i$ is the oriented graph with vertex set $V\defegal\{0,\dots ,i\}$ and for every vertex $j\in V$ a unique edge from $j$ to $j+1$ (see Figure~\ref{orientedPath}).
Let $\vec D_{N}$ be the adjacency matrix of this graph and $\vec B_{N}$ be the adjacency matrix of $\orB_{N}$.
Then there exists $U=U_{N}$ unitary with $\vec B_{N}=U^*\vec D_{N}U$.
\end{proposition2}
\begin{figure}[ht]
\centering
\includegraphics{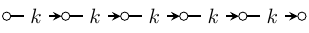}
\caption{An oriented weighted path of length $5$, where all edges have weight $k$.}
\label{orientedPath}
\end{figure}
\begin{figure}[ht]
\centering
\includegraphics{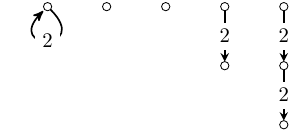}
\caption{The oriented weighted graph $\vec\Theta_{2,3}$.}
\label{Theta23}
\end{figure}
See Figure \ref{Theta23} for the example of $\vec\Theta_{2,3}$.

The only thing remaining to do in order to compute the characteristic polynomial of $\orSw{k}{N}{M}$ is to express the adjacency matrix $\vec S_{N,M}$ of $\orSw{k}{N}{M}$ using $\vec B_{N}$.
But it is well known that, for non-oriented graphs, the adjacency matrix of a tensor product is the tensor (or Kronecker) product of adjacency matrices.
This is also trivially true for oriented graphs.
Therefore, we have
\[
	\vec S_{N,M}=\vec B_{N}\otimes \vec A_M,
\]
where $\vec A_M$, the adjacency matrix of the oriented cycle $\vec C_M$, has a $1$ in position $(i,j)$ if and only if $j\equiv i+1\pmod M$.
Denoting $\vec D_{N,M}\defegal \vec D_{N}\otimes \vec A_M$, a simple computation gives us
\begin{align*}
	\vec S_{N,M}&=\vec B_{N}\otimes \vec A_M\\
	&=(U^*\vec D_{N}U)\otimes(\Id^*\vec A_M\Id)\\
	&=(U^*\otimes \Id^*)(\vec D_{N}\otimes \vec A_M)(U\otimes\Id)\\
	&=(U\otimes \Id)^*(\vec D_{N,M})(U\otimes\Id),
\end{align*}
where $\Id$ is the identity matrix of size $M$.
Since $U$ and the identity matrix are both unitary, their tensor product $U\otimes\Id$ is also unitary.
Thus, by Lemma~\ref{lemmaCharacPoly}, the characteristic polynomial of $\unSw{k}{N}{M}$ is equal to $\charac(\vec D_{N,M}+\vec D_{N,M}^*)$.

On the other hand, $\vec D_{N,M}$ is the adjacency matrix of the weighted graph $\vec\Theta_{k,N,M}\defegal\vec\Theta_{k,N}\otimes \vec C_M$.
Computing this tensor product we have that the weighted graph $\vec\Theta_{k,N,M}$ is the disjoint union of
\begin{enumerate}
\item
	one oriented cycle of length $M$,
\item
	for all $0\leq i\leq N-2$, $M(k-1)^2k^{N-i-2}$ disjoint oriented paths of length~$i$,
\item
	$M(k-1)$ disjoint oriented paths of length $N-1$;
\end{enumerate}
where all edges have weight $k$ --- see Figure~\ref{ExTheta} for an example.
Hence, $\vec D_{N,M}+\vec D_{N,M}^*$ is the adjacency matrix of $\underline{\vec\Theta_{k,N,M}}$.
Therefore
\begin{align*}
	\charac(\unSw{k}{N}{M})&=\charac(\vec D_{N,M}+\vec D_{N,M}^*)\\
	&=Q(x)\cdot P_{N}(x)^{M(k-1)}\prod_{i=1}^{N-1} P_{i}(x)^{M(k-1)^2k^{N-i-1}},
\end{align*}
where $Q(x)=Q_{k,M}(x)$ is the characteristic polynomial of the non-oriented cycle of length $M$ with all edges of weight $k$ and $P_{i}(x)=P_{i,k}(x)$ is the characteristic polynomial of the non-oriented path of length $i-1$ with all edges of weight $k$.
\begin{figure}[ht]
\centering
\includegraphics{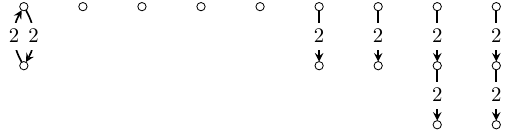}

\caption{The oriented weighted graph $\vec\Theta_{2,3,2}$.}
\label{ExTheta}
\end{figure}
We now want to have an explicit form for the $P_i$'s and $Q$.
In \cite{MR1621013}, Delorme and Tillich showed that $P_i(x)=k^iV_i(x/k)$ with $V_i$ the Chebyshev polynomial of the second kind of degree $i$.
The set of roots of $P_i(x)$ is exactly
\[
	\setst{2k\cos\Bigl(\frac{t\pi}{i+1}\Bigr)}{1\leq t\leq i}
\]
and all roots are simple.
On the other hand, $Q(x)$ is the characteristic polynomial of the adjacency matrix of a non-oriented cycle of length $M$ with edges of weight $k$.
If $M\neq 1$, all the non-zero entries of this matrix have value $k$ and are in position $(i,j)$ with $\abs{i-j}=1$.
If $M=1$, the cycle is a loop of length $k$ and the adjacency matrix consists of a unique entry: $2k$.
In both cases, the adjacency matrix is a circulant matrix of size $M$ and it has characteristic polynomial
\[
	Q(x)=\prod_{l=1}^M (x-2k\cos\Bigl(\frac{2\pi l}{M}\Bigr)).
\]
That is, the root $2k$ has multiplicity $1$, the root $-2k$ has multiplicity $1$ if $M$ is even and multiplicity $0$ otherwise and for all $1\leq l<M/2$ the root $2k\cos(\frac{2\pi l}{M})$ has multiplicity $2$.
Therefore we have just proved the following.

For every $k\geq 2$, $N\in\Nstar$, and $M\in\N$, the spectrum of $\unSw{k}{N}{M}$ consist of $2k$ with multiplicity $1$,  of
\[
	\setst{2k\cos\Bigl(\frac {p}q\pi\Bigr)}{1\leq p<q\leq N+1; p\tn{ and }q\tn{ relatively prime}},
\]
with multiplicity not specified yet and, if $M$ is even, also of $-2k$ with multiplicity~$1$.

The computation of the multiplicity of $2k\cos\Bigl(\frac {p}q\pi\Bigr)$ for a given $p$ and $q$ is done in four steps.
Step one: compute its multiplicity in eigenvalues (interpreted as roots) of $Q(x)$; it is either $0$ or $2$.
Step two: compute its multiplicity in $P_N(x)^{M(k-1)}$; it is either $0$ or $M(k-1)$.
Step three: for all $1\leq i\leq N-1$, compute its multiplicity in $P_i(x)^{M(k-1)^2k^{N-i-1}}$; it is either $0$ or $M(k-1)^2k^{N-i-1}$.
Step four: add the results of the three previous steps.

In step one, the multiplicity is non-zero if and only if there exists $1\leq l<\frac M2$ such that $\cos(2\pi l/M)=\cos(p\pi/q)$.
But this is possible if and only if $l=Mp/2q\geq 1$.
Since $l$ is an integer and $p$ and $q$ are relatively prime, the multiplicity is $2$ if and only if $2q$ divides $Mp$.

In step two, the multiplicity is non-zero if and only if $t=p(N+1)/q$, if and only if $q$ divides $N+1$.

In step three, the multiplicity is non-zero if and only if $t=p(i+1)/q$, if and only if $q$ divides $i+1$.

Summing up all these quantities we conclude that for $1\leq p<q\leq N+1$, the multiplicity of $2k\cos\Bigl(\frac {p}q\pi\Bigr)$ in the spectrum of $\unSw{k}{N}{M}$ is
\[
M(k-1)^2\cdot\Biggl(\sum_{j=1}^{\lfloor\frac Nq\rfloor}k^{N-jq}\Biggr)+M(k-1)r_1+2r_2,
\]
with 
\[
r_1=r_1(q,N)=\begin{cases}1& \tn{if $q$ divides $N+1$}\\0& \tn{otherwise}\end{cases}\]
and
\[ r_2=r_2(p,q,M)=\begin{cases}1& \tn{$2q$ divides $Mp$}\\0& \tn{otherwise.}\end{cases}
\]

Observe that in the above sum, the first summand is equal to
\begin{align*}
	M(k-1)^2\cdot\Biggl(\sum_{j=1}^{\lfloor\frac Nq\rfloor}k^{N-jq}\Biggr)&=M(k-1)^2k^N\cdot\sum_{j=1}^{\lfloor\frac Nq\rfloor}k^{-jq}\\
	&=M(k-1)^2k^N\left(\frac{1-k^{-q(\lfloor\frac Nq\rfloor+1)}}{1-k^{-q}}-1\right).
\end{align*}
In the case where $q=N+1$, the multiplicity of $2k\cos(\frac p{N+1}\pi)$ is $M(k-1)+2r_2$.
If $M=1$, $q$ cannot divide $M$ ($q$ is at least $2$), thus in this case $r_2$ is always equal to $0$.

If $\Gamma$ is a finite graph with $m$ vertices and with eigenvalues of the adjacency matrix $\lambda_1\geq\dots\geq\lambda_m$, we write
\[
	\mu_\Gamma=\frac 1m\sum_{i=1}^{m}\delta_{\lambda_i}
\]
for the \defi{spectral measure} on $\Gamma$,
where $\delta_x$ denotes the Dirac mass on $x$.
Then we have the following.
\begin{theorem2}\label{thmMultiplicity}
The spectral measure $\mu_{\unSw{k}{N}{M}}$ of $\unSw{k}{N}{M}$ is, if $M$ is odd, 
\begin{multline*}
	\frac{1}{Mk^N}\delta_{2k}\\
		+\sum \delta_{2k\cos(\frac pq\pi)}\left((k-1)^2\bigg(\frac{1-k^{-q(\lfloor\frac Nq\rfloor+1)}}{1-k^{-q}}-1\bigg)+\frac{k-1}{k^N}r_1+\frac{2}{Mk^N}r_2\right)
\end{multline*}
where the sum is over all $1\leq p<q\leq N+1$ with $(p,q)=1$.

If $M$ is even, there is one more summand: $\frac{1}{Mk^N}\delta_{-2k}$.
\end{theorem2}
\begin{remark2}\label{RmqIndepM}
It directly follows from the formula in the above theorem that for $k$ and $N$ fixed, $\unSw{k}{N}{M}$ and $\unSw{k}{N}{M'}$ have the same spectrum, except maybe for the value $-2k$, and that 
the total variation distance between $\mu_{\unSw{k}{N}{M}}$ and $\mu_{\unSw{k}{N}{M'}}$ is bounded by $\frac{2}{k^N}$, independently from $M$ and $M'$.
\end{remark2}
Since spider-web graphs converge, in the sense of Benjamini-Schramm, to the Cayley graph $\unCayley{\lamp_k}{X_k}$ we retrieve the Kesten spectral measure of the graph $\unCayley{\lamp_k}{X_k}$.
This measure was first computed by Grigorchuk and \.Zuk in \cite{MR1866850} for $k=2$ and then by Dicks and Schick in \cite{MR1934693} and by Kambites, Silva and Steinberg in \cite{MR2252901} for the more general case $G\wr \Z$, with $G\neq\{1\}$ a finite group.
\[
	\mu_\unCayley{\lamp_k}{X_k}=(k-1)^2\sum_{q\geq2}\frac{1}{k^q-1}\bigg(\sum_{\substack{1\leq p<q\\ (p,q)=1}}\delta_{2k\cos(\frac pq\pi)}\bigg).
	\tag{\#}
	\label{SpectralMeasureInfinite}
\]
%
%
%
%
%
\section{General results on spider-web graphs}\label{sectionGeneralResults}
In~\ref{ThmWeakIsomorphism} we proved that $\orSw{k}{N}{1}$ are weakly isomorphic to Schreier graphs of the lamplighter group $\lamp$.
Other spider-web graphs are so far described as $\orSw{k}{N}{M}\iso\orSw{k}{N}{1}\otimes\vec C_M$.
In this section we will show that $\orSw{k}{N}{M}$ is also a Schreier graph of $\lamp$ for each $M,N$.
Then we characterize which of the $\orSw{k}{N}{M}$ are transitive.
Finally, we generalize to spider-web graphs some statements that are known for de Bruijn graphs: existence of Eulerian and Hamiltonian paths, the property of being a line graph and some facts about covering.

As before we fix a $k\geq 2$ and omit to write it when it is not necessary.
We will write $\Nstar$ for $\{0,1,2,\dots\}$ and $\barN$ for $\{1,2,\dots,\infty\}$.
We also take $X=\{\bar c_i\}_{i=0}^{k-1}$ (see \eqref{presentation2} on page \pageref{presentation2}) as generating set for the lamplighter group $\lamp$.
%
%
%
%
%
\subsection{Spider-web graphs as Schreier graphs of lamplighter groups}
In Theorem~\ref{ThmWeakIsomorphism} we proved that the de Bruijn graph $\orB_{N}$ is weakly isomorphic to a Schreier graph of $\lamp$ by using line graphs.
Let us denote $W_{N,1}\defegal\Stab_\lamp(0^N)$.
Then we have the following.
\begin{theorem}\label{ThmWeakIsomorphismSpider}
For all $M\in\barN$ and $N\in\Nstar$, the spider-web graph $\orSw{k}{N}{M}$ is weakly isomorphic to
\[
	\orSchrei{\lamp}{W_{N,M}}{X}
\]
where $W_{N,M}=\setst{g\in W_{N,1}}{\exp_X(g)\equiv 0\pmod M}$.
\end{theorem}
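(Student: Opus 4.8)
The plan is to assemble the desired weak isomorphism from results already in hand, routing everything through the tensor-product description of spider-web graphs. First I would invoke Corollary \ref{corIsoSwB}, which gives a strong---hence in particular weak---isomorphism $\orSw{k}{N}{M}\iso\orB_{N}\otimes\vec C_M$. The whole task then reduces to recognizing the right-hand side as a Schreier graph of $\lamp$.

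Next I would replace the de Bruijn factor by the Schreier graph to which it is weakly isomorphic. Theorem \ref{ThmWeakIsomorphism} supplies a weak isomorphism $\orB_{N}\to\vec\Gamma_{k,N}$, and choosing the level-$N$ vertex $0^N$ we have $\vec\Gamma_{k,N}=\orSchrei{\lamp}{W_{N,1}}{X}$ with $W_{N,1}=\Stab_\lamp(0^N)$. Since the tensor product is the categorical product of (oriented) graphs, tensoring this weak isomorphism with the identity of $\vec C_M$ yields a weak isomorphism $\orB_{N}\otimes\vec C_M\iso\orSchrei{\lamp}{W_{N,1}}{X}\otimes\vec C_M$. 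This is the crucial move, and also the point demanding the most care: Theorem \ref{ThmWeakIsomorphism} provides only a \emph{weak} isomorphism---de Bruijn graphs and Schreier graphs are never strongly isomorphic---so the labeled-graph machinery cannot be applied to $\orB_{N}\otimes\vec C_M$ directly; I must first transport the problem onto the genuinely labeled graph $\orSchrei{\lamp}{W_{N,1}}{X}$ and only there use strong morphisms.

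On the Schreier side I can then apply Proposition \ref{propSchreierTensor}. The presentation \eqref{presentation2} of $\lamp$ has every relator of exponent $0$ modulo $M$, as recorded just before Proposition \ref{PropPresentation}, so the hypothesis of Proposition \ref{propSchreierTensor} is met for every $M\in\barN$. The proposition then identifies each connected component of $\orSchrei{\lamp}{W_{N,1}}{X}\otimes\vec C_M$ with the Schreier graph $\orSchrei{\lamp}{(W_{N,1})_M}{X}$, where $(W_{N,1})_M=\setst{g\in W_{N,1}}{\exp_X(g)\equiv0\pmod M}=W_{N,M}$ is exactly the subgroup appearing in the statement.

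It remains only to dispose of the connected-component qualifier. By Lemma \ref{LemmaConnected} the graph $\orSw{k}{N}{M}$ is connected, and connectedness is preserved under the weak isomorphisms above, so $\orSchrei{\lamp}{W_{N,1}}{X}\otimes\vec C_M$ is connected and therefore coincides with its single component $\orSchrei{\lamp}{W_{N,M}}{X}$. Composing the three isomorphisms---strong from Corollary \ref{corIsoSwB}, weak from Theorem \ref{ThmWeakIsomorphism}, strong from Proposition \ref{propSchreierTensor}---then yields the weak isomorphism $\orSw{k}{N}{M}\iso\orSchrei{\lamp}{W_{N,M}}{X}$, uniformly for all $M\in\barN$ and $N\in\Nstar$.
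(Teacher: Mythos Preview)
Your proof is correct and follows essentially the same route as the paper's own argument: pass from $\orSw{k}{N}{M}$ to $\orB_{N}\otimes\vec C_M$ via Corollary~\ref{corIsoSwB}, then to $\vec\Gamma_{k,N}\otimes\vec C_M$ via Theorem~\ref{ThmWeakIsomorphism}, and finally invoke Proposition~\ref{propSchreierTensor} to identify the result as $\orSchrei{\lamp}{W_{N,M}}{X}$. Your version is in fact more careful than the paper's, which compresses all of this into one sentence and leaves the connectedness issue implicit; your explicit appeal to Lemma~\ref{LemmaConnected} and your remark that the labeled-graph machinery must be applied only after passing to the Schreier side are both welcome clarifications.
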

\begin{proof}
Since $\orB_{N}$ is weakly isomorphic to $\vec\Gamma_N$, the spider-web graph $\orSw{k}{N}{M}\iso\orB_{N}\otimes\vec C_M$ is weakly isomorphic to $\vec\Gamma_N\otimes\vec C_M$ which is a Schreier graph of $\lamp$ by Proposition~\ref{propSchreierTensor} and the description of $W_{N,M}$ follows.
\end{proof}
\begin{remark}
Geometrically, we are here in the situation described in Remark~\ref{rmkGeo}, where the action of $\lamp$ on $\vec C_M$ is given by $j\acts (\prod_{t=1}^j (b^{i_t}cb^{-i_t})^{r_t}\cdot b^{r_b})=j+r_b \pmod M$.
Indeed, $b^{i_t}cb^{-i_t}=\bar c_0^{i_t-1}\bar c_1\bar c_0^{-i_t}$ and therefore we have  $\exp_X(\prod_{t=1}^j (b^{i_t}cb^{-i_t})^{r_t}\cdot b^{r_b})=r_b$.
\end{remark}
Given a graph, a group and a generating set, there could be a priori many different ways to represent the graph as a Schreier graph of the group.
It is easy to check that every $g\neq 1$ in $\lamp$, can be written in a unique way as $\prod_{t=1}^j (b^{i_t}cb^{-i_t})^{r_t}\cdot b^{r_b}$, where $j,r_b,r_t,i_t\in \Z$ with $j\geq 0$ and $0<r_t<k$ for all $1\leq t\leq j$.
This allows us to define a subgroup $H_{N,M}$  of $\lamp$ as the following set:
\[
	H_{N,M}\defegal\left\{g=\prod_{t=1}^j (b^{i_t}cb^{-i_t})^{r_t}\cdot b^{r_b}\in\lamp\,\middle|\,
		\begin{gathered}
		r_b \equiv0 \pmod M\\
		\forall i, 1\leq i\leq N, \sum\limits_{i_t\stackrel{N}{\equiv} i}r_t\equiv 0 \pmod k
		\end{gathered}
	\right\},
\]
where the second sum is over all $i_t\equiv i \pmod N$.
\begin{theorem}\label{ThmSwSchreier}
For all $M\in\barN$ and $N\in\Nstar$, the spider-web graph $\unSw{k}{N}{M}$ is weakly isomorphic to
$\unSchrei{\lamp}{H_{N,M}}{X^{-1}}$.
\end{theorem}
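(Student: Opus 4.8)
The plan is to reduce to the case $M=1$ and then recover general $M$ by tensoring with $\vec C_M$, exactly as in the proof of Theorem \ref{ThmWeakIsomorphismSpider}. The point is that $H_{N,M}$ is obtained from $H_{N,1}$ by the same ``$\exp_X\equiv 0$'' truncation: since $\exp_X$ is the homomorphism $\lamp\to\Z$ with $\exp_X(b)=1$ and $\exp_X(b^icb^{-i})=0$, one reads off $\exp_X(g)=r_b$ for $g=\prod_t(b^{i_t}cb^{-i_t})^{r_t}b^{r_b}$, whence $H_{N,M}=\{g\in H_{N,1}\mid \exp_X(g)\equiv0\pmod M\}$. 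So once I know that $\orB_N$ is weakly isomorphic to $\orSchrei{\lamp}{H_{N,1}}{X}$, tensoring that weak isomorphism with $\Id\colon\vec C_M\to\vec C_M$ gives, via Corollary \ref{corIsoSwB}, a weak isomorphism $\orSw{k}{N}{M}\iso\orB_N\otimes\vec C_M\iso\orSchrei{\lamp}{H_{N,1}}{X}\otimes\vec C_M$. Proposition \ref{propSchreierTensor} (applicable since every relator of \eqref{presentation2} has vanishing $\exp_X$) identifies each connected component of the right-hand side with $\orSchrei{\lamp}{H_{N,M}}{X}$, and since $\der(\orB_N)=1$, Proposition \ref{PropNbrComponent} shows this tensor product is connected, hence equals that single Schreier graph. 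Passing to underlying graphs yields the statement.

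It therefore remains to treat $M=1$, i.e. to realize $\orB_N$ as weakly isomorphic to $\orSchrei{\lamp}{H_{N,1}}{X}$. First I would exhibit $H_{N,1}$ as a point stabilizer. Writing the $\A$-part of $g=\prod_t(b^{i_t}cb^{-i_t})^{r_t}b^{r_b}$ as the finitely supported function $\phi_g\colon\Z\to\Z/k\Z$, $\phi_g(i)=\sum_{i_t=i}r_t$, the defining condition of $H_{N,1}$ is precisely that the ``folded'' function $\bar\phi_g\in(\Z/k\Z)^{\Z/N\Z}$, $\bar\phi_g(\bar\imath)=\sum_{i_t\equiv\bar\imath}r_t$, vanishes. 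This folding is the restriction to the base $\A$ of the canonical surjection $\pi\colon\lamp\twoheadrightarrow Q_N\defegal(\Z/k\Z)^{\Z/N\Z}\rtimes\Z$, where $\Z$ acts by the cyclic shift $\sigma$. Consider the transitive action of $\lamp$, through $\pi$, on the coset space $Q_N/\Z\iso(\Z/k\Z)^{\Z/N\Z}$; concretely $b$ acts as $\sigma$ and $c$ as $+e_0$, so that
\[
\bar c_r\acts(a_0,\dots,a_{N-1})=(r+a_{N-1},\,a_0,\dots,a_{N-2}).
\]
A direct computation gives $\Stab_\lamp(0)=\{g\mid\bar\phi_g=0\}=H_{N,1}$, so the graph of this action (connected, the action being transitive) is exactly $\orSchrei{\lamp}{H_{N,1}}{X}$.

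The heart of the argument is then to identify this graph, as an unlabeled oriented graph, with $\orB_N$. From the displayed formula, the out-neighbours of $(a_0,\dots,a_{N-1})$ are exactly the $k$ vertices $\{(s,a_0,\dots,a_{N-2})\mid s\in\Z/k\Z\}$, i.e. those sharing their last $N-1$ coordinates with $(a_0,\dots,a_{N-2})$; whereas in $\orB_N$ the out-neighbours of $(x_1\dots x_N)$ are $\{(x_2\dots x_Ns)\mid s\}$, those sharing their first $N-1$ coordinates with $(x_2\dots x_N)$. Hence the coordinate-reversal bijection $\Psi(a_0,\dots,a_{N-1})=(a_{N-1},\dots,a_0)$ carries out-neighbourhoods bijectively onto out-neighbourhoods, and since both graphs are $k$-out-regular with pairwise distinct out-neighbours, $\Psi$ extends to a weak isomorphism of oriented graphs $\orSchrei{\lamp}{H_{N,1}}{X}\iso\orB_N$, as required.

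The main obstacle, and the only genuinely new step, is guessing the correct concrete permutation action: note that $\lamp$ does \emph{not} act on de Bruijn vertices by the naive forgetful shift (which is not injective), so one must pass to the cyclic ``folding'' action through $Q_N$, and then recognize, after reversing the order of coordinates, that the resulting Schreier graph is de Bruijn. Everything else — the stabilizer computation, the passage from $M=1$ to general $M$ via Proposition \ref{propSchreierTensor}, and the descent from oriented to non-oriented graphs — is bookkeeping using results already established. One should be careful that $H_{N,1}$ and the stabilizer $W_{N,1}$ of Theorem \ref{ThmWeakIsomorphismSpider} are in general \emph{different} subgroups of $\lamp$, so this theorem is not a mere restatement: it supplies a second, combinatorially explicit, subgroup whose Schreier graph is (only weakly) isomorphic to the same spider-web graph.
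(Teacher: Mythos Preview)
Your proof is correct and rests on the same underlying construction as the paper's: an explicit action of $\lamp$ on the de Bruijn/spider-web vertex set in which $b$ acts by a cyclic shift of the $N$ coordinates and $c$ by translation of a single coordinate, whose point stabilizer is $H_{N,M}$. The organization, however, differs. The paper defines this action directly on the full vertex set of $\orSw{k}{N}{M}$ for all $M$ at once (with $b$ also decrementing the $\vec C_M$ coordinate), checks the lamplighter relations, observes that with respect to the generating set $X^{-1}$ the edges match those of $\orSw{k}{N}{M}$ on the nose, and then reads off $H_{N,M}=\Stab_\lamp(0\dots 0,0)$ in one step. You instead mirror the strategy of Theorem~\ref{ThmWeakIsomorphismSpider}: handle $M=1$ first (factoring explicitly through the quotient $(\Z/k\Z)^{\Z/N\Z}\rtimes\Z$ and using coordinate reversal to match $\orB_N$), and then recover general $M$ by tensoring and invoking Proposition~\ref{propSchreierTensor}, together with the observation $H_{N,M}=\{g\in H_{N,1}\mid \exp_X(g)\equiv 0\pmod M\}$.

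Both routes are short; the paper's is slightly more direct, while yours is more modular and makes the relationship between the two parameters explicit by reusing the tensor-product machinery already developed. The cosmetic difference---your coordinate reversal versus the paper's use of $X^{-1}$---reflects two equivalent ways of reconciling the ``shift left'' in de Bruijn edges with the ``shift right'' in the natural $\lamp$-action.
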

\begin{proof}
Define the following permutations on the vertex set $V$ of $\orSw{k}{N}{M}$:
\begin{align*}
	(x_1\dots x_N,j)\acts b&\defegal (x_Nx_1\dots x_{N-1},j-1)\\
	(x_1\dots x_N,j)\acts c&\defegal\bigl((x_1-1)x_2\dots x_N,j\bigr)
\end{align*}
where $x_1-1$ is taken modulo $k$ and $j-1$ modulo $M$.
The group $G$ generated by $b$ and $c$ acts on $V$.

An easy check shows that $b$ and $c$ satisfies the relations in the presentation \eqref{presentation1} (page \pageref{presentation1}) of $\lamp$. 
Therefore $G$ is a quotient of $\lamp$, which implies that $\lamp$ acts too on $V$.
Moreover, for the generating set $X^{-1}=\{\bar c_i^{-1}\}$ there are exactly $k$ edges in the graph of the action with initial vertex $(x_1\dots x_N,j)$: the one labeled by $(\bar c_r)^{-1}=c^{-r}b^{-1}$ having $(x_2\dots x_N(x_1+r),j+1)$ as end vertex.
On other hand, in $\orSw{k}{N}{M}$ there are also exactly $k$ edges with initial vertex $(x_1\dots x_N,j)$: the one labeled by $R_y$ having $(x_2\dots x_Ny,j+1)$ as end vertex.
Since vertex sets and adjacency relations (if we forget about labeling) are the same, $\orSw{k}{N}{M}$ is weakly isomorphic to the graph of the action of $\lamp$ on $V$ with respect to the generating set $X^{-1}$.
Moreover, this graph being connected, it is also weakly isomorphic to the Schreier graph $\orSchrei{\lamp}{H_{N,M}}{X^{-1}}$, with $H_{N,M}=\Stab_{\lamp}(0\dots0,0)$.
A straightforward calculation gives us $H_{N,M}$.

Finally, since we have an isomorphism between oriented graphs, there is an isomorphism between the underlying non-oriented graphs.
\end{proof}

In \cite{MR3278388} Grigorchuk and Kravchenko classified subgroups of $\lamp$ and gave a criterion for normality.

We now recall these two results and identify $H_{N,M}$ and $W_{N,M}$ according to this classification.
We then are able to see which of the subgroups are normal (in which case the corresponding Schreier graphs are in fact Cayley graphs).

Recall that $\A=\oplus_{\Z}\Z/k\Z$ is the abelian part of $\lamp$ and that $b$ acts on $\A$ by shift.
\begin{lemma}[\cite{MR3278388}, Lemma 3.1]
Let $H$ be a subgroup of $\lamp$. Then it defines the triple $(s,H^0,v)$ where $s\in\N$ is such that $s\Z$ is the image of the projection of $H$ on $\Z$, $H^0=H\cap \A$, satisfying $H^0\acts b^s=H^0$, and $v\in \A$ is such that $vb^s\in H$.
The $v$ is uniquely defined up to addition of elements from $H^0$. For $s=0$ one can choose $v=1_\lamp$.

Conversely any triple $(s,H^0,v)$ with such properties gives rise to a subgroup of $\lamp_k$. Two triples $(s,H^0,v)$ and $(s',G^0,v')$ define the same subgroup if and only if $s=s'$, $H^0 =G^0$ and $vH^0 =v'G^0$.
Moreover, $H \subseteq G$ if and only if $s'|s$, $H^0 \subseteq G^0$ and $v \equiv \prod_{i=0}^{s/s'}({v'}\acts b^{is'}) \pmod{G^0}$.
\end{lemma}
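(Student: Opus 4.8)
The plan is to exploit the semidirect product structure $\lamp=\A\rtimes\langle b\rangle$, in which every element has a unique normal form $ab^n$ with $a\in\A$ and $n\in\Z$, and multiplication reads $(ab^n)(a'b^{n'})=a\,(b^n\acts a')\,b^{n+n'}$. The three invariants of $H$ will be read off from the projection homomorphism $\pi\colon\lamp\to\Z$, $ab^n\mapsto n$: set $s$ by $\pi(H)=s\Z$, put $H^0\defegal H\cap\A=\ker(\pi|_H)$, and, since $s\in\pi(H)$, choose any $v\in\A$ with $vb^s\in H$ (taking $v=1_\lamp$ when $s=0$). Everything then reduces to bookkeeping in this normal form.

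For the direct statement I would first check $b^s\acts H^0=H^0$. The key computation is that for $h^0\in H^0$ and $vb^s\in H$, conjugation gives $(vb^s)\,h^0\,(vb^s)^{-1}=b^s\acts h^0$ (the factors of $v$ cancel because $\A$ is abelian), and this lies in $H\cap\A=H^0$; applying the same to $(vb^s)^{-1}$ yields the reverse inclusion. Uniqueness of $v$ modulo $H^0$ is then immediate: if $vb^s,v'b^s\in H$, then $(vb^s)(v'b^s)^{-1}=v(v')^{-1}\in H\cap\A=H^0$.

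For the converse I would set $H\defegal\langle H^0,vb^s\rangle$ and show $H=\bigcup_{n\in\Z}H^0\,(vb^s)^n$. The engine here is the power formula $(vb^s)^n=\bigl(\prod_{i=0}^{n-1}b^{is}\acts v\bigr)b^{ns}$, proved by an easy induction, together with $b^{ns}\acts H^0=H^0$ (iterating the invariance); these give closure under products and inverses exactly as in the computation above. One checks directly that this $H$ has projection $s\Z$, intersection $H^0$ with $\A$, and contains $vb^s$, so it realizes the triple. The ``same subgroup'' criterion is then forced: $s$ is determined by $\pi(H)$, $H^0$ by $H\cap\A$, and $v$ up to $H^0$ by the previous paragraph; conversely, if the three data agree then $v'b^s=h^0(vb^s)\in H$ for some $h^0\in H^0$, so the two generating sets coincide.

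Finally, for the containment criterion, $H\subseteq G$ forces $\pi(H)=s\Z\subseteq s'\Z=\pi(G)$, i.e. $s'\mid s$, and $H^0=H\cap\A\subseteq G\cap\A=G^0$. The remaining condition comes from requiring the one extra generator $vb^s$ to lie in $G$: writing $d=s/s'$, the element $(v'b^{s'})^{d}\in G$ has the same $\Z$-component $s$ as $vb^s$, so $vb^s\in G$ iff $(vb^s)\bigl((v'b^{s'})^{d}\bigr)^{-1}\in G\cap\A=G^0$, which by the power formula says precisely that $v$ agrees modulo $G^0$ with the $\A$-component $\prod_{i=0}^{s/s'-1}(b^{is'}\acts v')$ of $(v'b^{s'})^{s/s'}$. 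The main obstacle is purely organizational: keeping the semidirect-product normal form and the power and conjugation formulas straight, and treating the degenerate case $s=0$ (where $v=1_\lamp$ and $H=H^0\leq\A$) separately, so that the divisibility and product conditions read correctly.
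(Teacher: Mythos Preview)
The paper does not actually prove this lemma: it is quoted verbatim from \cite{LatticeLamplighter} and used as a black box, so there is no ``paper's own proof'' to compare against. Your argument is the standard and correct one for classifying subgroups of a semidirect product $\A\rtimes\Z$ via the projection to $\Z$, and every step checks out: the conjugation identity $(vb^s)h^0(vb^s)^{-1}=b^s\acts h^0$, the power formula for $(vb^s)^n$, and the reduction of the containment question to membership of the single generator $vb^s$ in $G$. You also implicitly (and correctly) spot that the upper index in the product should be $s/s'-1$ rather than $s/s'$ as printed in the statement; this is a typo in the quoted formula, since the $\A$-component of $(v'b^{s'})^{s/s'}$ is $\prod_{i=0}^{s/s'-1}b^{is'}\acts v'$.
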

\begin{lemma}[\cite{MR3278388}, Lemma 3.2]
Let $H$ be a subgroup of $\lamp$. Then $H$ is normal if and only if the corresponding triple $(s, H^0, v)$, satisfies the additional properties that $H^0\acts b = H^0$, $v(v\acts b)^{-1} \in H^0$ and $g(g\acts b^s)^{-1}\in H^0$ for all $g\in \A$.
\end{lemma}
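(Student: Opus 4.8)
The plan is to reduce normality to invariance under conjugation by a generating set, and then to read off the effect of each such conjugation on the classifying triple $(s,H^0,v)$ by means of the equality criterion from the previous lemma. Since $N_\lamp(H)=\setst{g\in\lamp}{gHg^{-1}=H}$ is always a subgroup of $\lamp$, we have $H\trianglelefteq\lamp$ if and only if $gHg^{-1}=H$ for every $g$ in some generating set. I would take the generating set $\A\cup\{b\}$ (note that $\A$ and $b$ together generate $\lamp=\A\rtimes\langle b\rangle$), so that it suffices to identify the triples of $bHb^{-1}$ and of $aHa^{-1}$ for $a\in\A$, and then apply the criterion that $(s,H^0,v)$ and $(s',G^0,v')$ define the same subgroup iff $s=s'$, $H^0=G^0$ and $vH^0=v'G^0$.

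First I would compute the triple of $bHb^{-1}$. Because the projection $\lamp\to\Z$ is constant on conjugacy classes, the first entry remains $s$. Conjugation by $b$ restricts to the shift automorphism on $\A$, so $(bHb^{-1})\cap\A=b\acts H^0$; and from $vb^s\in H$ one gets $b(vb^s)b^{-1}=(b\acts v)b^s\in bHb^{-1}$, giving $b\acts v$ as a valid representative. Hence $bHb^{-1}$ has triple $(s,\,b\acts H^0,\,b\acts v)$, and the equality criterion yields $bHb^{-1}=H$ if and only if $b\acts H^0=H^0$ together with $(b\acts v)H^0=vH^0$; the latter is exactly $v(b\acts v)^{-1}\in H^0$. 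These are the first two stated conditions.

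Next I would conjugate by an element $a\in\A$. As $\A$ is abelian, conjugation by $a$ fixes $\A$ pointwise, so the first two entries are unchanged: the projection is still $s\Z$ and $(aHa^{-1})\cap\A=H^0$. For the representative, moving $a^{-1}$ across $b^s$ gives $a(vb^s)a^{-1}=v\cdot a(b^s\acts a)^{-1}\,b^s$, so the new representative is $v\cdot a(b^s\acts a)^{-1}$. The equality criterion then gives $aHa^{-1}=H$ if and only if $a(b^s\acts a)^{-1}\in H^0$; imposing this for every $a\in\A$ is precisely the third condition $g(b^s\acts g)^{-1}\in H^0$ for all $g\in\A$. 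Collecting the three equivalences and invoking the reduction to generators proves both implications at once.

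The computations are routine once the convention $hb^m=(b^{-m}\acts h)^{-1}\text{-type}$ moves for commuting $\A$-elements past powers of $b$ are fixed; the only point requiring genuine care is verifying that $(s,\,b\acts H^0,\,b\acts v)$ and $(s,\,H^0,\,v\cdot a(b^s\acts a)^{-1})$ really are the canonical triples of the conjugated subgroups, which rests on the facts that conjugation by $b$ implements the shift on $\A$ while conjugation by $a\in\A$ is trivial on the abelian part. I expect this bookkeeping to be the main (and essentially the only) obstacle; in particular, no induction over higher powers $b^{ms}$ is needed, since the equality criterion only involves the single representative $v$ and hence only $b^s\acts a$ rather than $b^{ms}\acts a$.
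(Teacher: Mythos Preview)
The paper does not give its own proof of this lemma: it is quoted verbatim from \cite{LatticeLamplighter} (as Lemma 3.2 there) and used as a black box in the proof of Corollary \ref{NormalityOfH}. So there is nothing in the present paper to compare your argument against.

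That said, your proof is correct and is essentially the natural one. The reduction to a generating set via the normalizer is standard; your choice of $\A\cup\{b\}$ is the right one here since conjugation by elements of $\A$ and by $b$ act in transparently different ways on the triple. The two computations are accurate: conjugation by $b$ shifts both $H^0$ and $v$, yielding the first two conditions, while conjugation by $a\in\A$ fixes $H^0$ (by abelianness of $\A$) and twists the representative to $v\cdot a(b^s\acts a)^{-1}$, yielding the third. The only place to be slightly careful is the order of the two conditions coming from $bHb^{-1}=H$: the equality criterion from the previous lemma requires $b\acts H^0=H^0$ \emph{and} $(b\acts v)(b\acts H^0)=vH^0$, and you correctly use the former to rewrite the latter as $v(b\acts v)^{-1}\in H^0$. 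No further work is needed.
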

Note that in \cite{MR3278388} only the case of $k$ prime is treated.
However both lemmas remain true for all $k$.
\begin{proposition}~\label{PropHW}
\begin{enumerate}
\item
For all $M\in\N$ and $N\in\Nstar$ the subgroup $H_{N,M}$ corresponds to the triple $(M,H^0,1_\lamp)$, where $H^0={H_{N,M}}^0$ is the following subgroup of $\A$
\[
H^0=\left\{\prod_{t=1}^j (b^{i_t}cb^{-i_t})^{r_t}\in\lamp\,\middle|\,
\forall i, 1\leq i\leq N, \sum\limits_{i_t\stackrel{N}{\equiv} i}r_t\equiv 0 \pmod k
\right\}.
\]
For all $N\in\Nstar$ the subgroup $H_{N,\infty,}$ corresponds to the triple $(0,H^0,1_\lamp)$.
\item
For all $M\in\N$ and $N\in\Nstar$ the subgroup $W_{N,M}$ corresponds to the triple $(M,W^0,1_\lamp)$, with $W^0={W_{N,M}}^0=W_{N,1}\cap \A$. In particular, $W^0$ does not depend on $M$.
For all $N\in\Nstar$ the subgroup $W_{N,\infty}$ corresponds to the triple $(0,W^0,1_\lamp)$.
\end{enumerate}
\end{proposition}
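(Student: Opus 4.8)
The plan is to apply the Grigorchuk--Kravchenko classification (\cite{LatticeLamplighter}, Lemma 3.1), which reduces the task to identifying, for each of the two families of subgroups, the three data: the integer $s$, the $\A$-part, and the coset representative $v$. The key preliminary observation to record is that the exponent $\exp_X$ agrees with the canonical projection $\pi\colon\lamp\to\Z$ onto the $\Z$-factor: every generator $\bar c_i=c^ib$ satisfies $\pi(\bar c_i)=1$, whence $\exp_X(g)=\pi(g)$ for all $g\in\lamp$, and this is exactly the $\Z$-coordinate of $g$. In particular the condition $\exp_X(g)\equiv0\pmod M$ in the definition of $W_{N,M}$ (and the condition $r_b\equiv0\pmod M$ in the definition of $H_{N,M}$) simply says $\pi(g)\in M\Z$, and $\A=\ker\pi$.

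For part (1) I would read the triple directly off the standard form $g=\prod_{t=1}^j(b^{i_t}cb^{-i_t})^{r_t}\cdot b^{r_b}$ used to define $H_{N,M}$. Here $\pi(g)=r_b$, so the defining constraint $r_b\equiv0\pmod M$ gives $\pi(H_{N,M})=M\Z$, hence $s=M$. Intersecting with $\A$ amounts to setting $r_b=0$, which returns precisely the displayed subgroup $H^0$. Finally $b^M$ lies in $H_{N,M}$: its standard form has empty abelian part (so the mod-$k$ conditions hold vacuously) and $r_b=M\equiv0\pmod M$; therefore $v=1_\lamp$ is an admissible choice and the triple is $(M,H^0,1_\lamp)$. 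The case $M=\infty$ forces $r_b=0$, so $H_{N,\infty}=H^0\subseteq\A$, giving $s=0$ and $v=1_\lamp$.

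For part (2) the one computation that must be done by hand is that $b$ fixes the vertex $0^N$: from $b\acts(x_1\dots x_N)=\bigl(x_1,(x_2+x_1),\dots,(x_N+x_{N-1})\bigr)$ one reads off that the all-zero string is fixed, so $b\in W_{N,1}$ and $\pi(W_{N,1})=\Z$. Consequently $b^M\in W_{N,1}$ with $\pi(b^M)=M$, and since $W_{N,M}=W_{N,1}\cap\pi^{-1}(M\Z)$ we obtain $\pi(W_{N,M})=M\Z$, i.e. $s=M$, with $v=1_\lamp$ again witnessed by $b^M$. Because $\A=\ker\pi$, intersecting $W_{N,M}$ with $\A$ discards the mod-$M$ constraint entirely, so $W^0=W_{N,M}\cap\A=W_{N,1}\cap\A$ independently of $M$; the case $M=\infty$ is identical with $s=0$.

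The verification of the compatibility condition $b^s\acts H^0=H^0$ (respectively $b^s\acts W^0=W^0$) demanded by the classification is routine: for $H^0$ the shift by $b$ merely permutes the residue classes modulo $N$ over which the defining sums range, while for $W^0$ it follows from $b\in W_{N,1}$, since then conjugation by $b$ preserves $W_{N,1}\cap\A$. I do not anticipate a genuine obstacle here; the only points requiring care are the bookkeeping in the standard-form description of $H_{N,M}$ and the direct check that $b$ stabilizes $0^N$, after which both triples fall out of the dictionary $\exp_X=\pi$.
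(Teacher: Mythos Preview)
Your proposal is correct and follows essentially the same route as the paper: identify the projection to $\Z$ via the $r_b$-coordinate (the paper computes $\exp_X(b^icb^{-i})=0$ to reach the same conclusion you state as $\exp_X=\pi$), use $b^M\in H_{N,M}$ and $b\in W_{N,1}$ (the latter via the direct check that $b$ fixes $0^N$) to read off $s$ and $v=1_\lamp$, and intersect with $\A$ for the abelian part. Your write-up is in fact slightly more complete, since you include the verification of $b^s\acts H^0=H^0$ and $b^s\acts W^0=W^0$ required by the classification lemma; the paper defers the $H^0$ case of this check to the proof of the subsequent corollary.
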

\begin{proof}
We first prove the proposition for $H_{N,M}$.
It is obvious that $H\cap \A=H^0$.

Take $g=\prod_{t=1}^j (b^{i_t}cb^{-i_t})^{r_t}\cdot b^{r_b}$ in $H_{N,M}$.
The projection of $g$ onto $\Z$ is $r_b$, which is equal to $0 \pmod M$.
If $M$ is finite, since $b^M$ belongs to $H_{N,M}$, the projection of $H_{N,M}$ onto $\Z$ is $M\Z$.
If $M=\infty$, then the projection of $H_{N,M}$ onto $\Z$ is $0$.
Finally, $1_\lamp b^M$ belongs to $H_{N,M}$ as asked.

Now, for $W_{N,M}$ we first look at the case $M=1$.
Since $b$ stabilizes $(0\dots 0)$, it belongs to $W_{N,M}$, the stabilizer of $(0\dots 0)$.
Therefore, $W_{N,1}$ corresponds to the triple $(1,W^0,1_\lamp)$.
For an arbitrary $M$, $W_{N,M}$ is the subgroup of $W_{N,1}$ consisting of elements with total exponent equal to $0$ modulo $M$.
Since  $b^icb^{-i}=b^{i-1}\bar c_1b^{-i}$, we have $\exp_{X_k}(b^icb^{-i})=0$ and for any $g=\prod_{t=1}^j (b^{i_t}cb^{-i_t})^{r_t}\cdot b^{r_b}\in W_{1,M}$ the total exponent of $g$ is precisely $r_b$.
Therefore, $W_{N,M}$ corresponds to the triple $(M,W^0,1_\lamp)$ if $M$ is finite and to the triple $(0,W^0,1_\lamp)$ if $M=\infty$.
\end{proof}
\begin{corollary}~\label{NormalityOfH}
\begin{enumerate}
\item
For all $M\in\barN$ and $N\in\Nstar$, the subgroup $H_{N,M}$ is normal if and only if $N$ divides $M$.
In particular $H_{N,\infty}$ is normal for every $N$.
\item
For all $M\in\barN$ and $N\in\Nstar$, the subgroup $W_{N,M}$ is normal if and only if $N=1$.
\end{enumerate}
\end{corollary}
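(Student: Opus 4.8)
The plan is to feed the triples computed in Proposition~\ref{PropHW} into the two results of Grigorchuk--Kravchenko recalled just above: the classification of subgroups of $\lamp$ by triples $(s,H^0,v)$, and the normality criterion. Since in both cases the representative is $v=1_\lamp$, the condition $v(b\acts v)^{-1}\in H^0$ is automatically satisfied, so only two conditions remain to check: that $b\acts H^0=H^0$, and that $g(b^{s}\acts g)^{-1}\in H^0$ for every $g\in\A$, where $s=M$ when $M$ is finite and $s=0$ when $M=\infty$.

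First I would dispose of the two easy cases. The condition $b\acts H^0=H^0$ holds automatically for both $H_{N,M}$ and $W_{N,M}$: the defining constraints are invariant under the shift $b\acts(b^icb^{-i})=b^{i+1}cb^{-i-1}$, since shifting the index by one merely permutes the residue classes modulo $N$ in the description of ${H_{N,M}}^0$, while in the power-series description of $W^0$ it multiplies $P_g$ by the unit $(1+t)$, which does not change whether $P_g\equiv 0\pmod{t^N}$. Likewise, when $M=\infty$ we have $s=0$, so $g(b^0\acts g)^{-1}=1_\lamp\in H^0$ for every $g$, and both $H_{N,\infty}$ and $W_{N,\infty}$ are normal with no further work; this already yields the ``in particular'' statement for $H$.

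Thus everything comes down to the third condition, $g(b^{M}\acts g)^{-1}\in H^0$ for all $g\in\A$, for finite $M$. For $H_{N,M}$ I would write $g=\prod(b^{i_t}cb^{-i_t})^{r_t}$ in coordinates $(r_i)_{i\in\Z}$ and evaluate the defining functionals $\psi_j(g)=\sum_{i\,\equiv\, j\ (N)}r_i$; since $b^{M}\acts g$ shifts the index by $M$, one gets $\psi_j\bigl(g(b^{M}\acts g)^{-1}\bigr)=\psi_j(g)-\psi_{j-M}(g)$, which vanishes for all $g$ exactly when $j\equiv j-M\pmod N$ for every $j$, i.e. when $N\mid M$. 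This gives the clean criterion for $H_{N,M}$. For $W_{N,M}$ I would instead use the power-series model of the action (as in the proof of Proposition~\ref{PropCayley}), in which $b^icb^{-i}\acts F=F+(1+t)^i$ and $W^0=\{g\in\A\mid P_g\equiv 0\pmod{t^N}\}$ with $P_g=\sum_t r_t(1+t)^{i_t}$; here $g(b^{M}\acts g)^{-1}$ corresponds to $(1-(1+t)^{M})P_g$, and since the reductions of $(1+t)^0,\dots,(1+t)^{N-1}$ form a basis of $\Z/k\Z[t]/(t^N)$, the values $P_g\bmod t^N$ exhaust the whole quotient ring. Hence the condition becomes $(1+t)^{M}\equiv 1\pmod{t^N}$.

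The main obstacle is precisely this last power-series congruence: I would need to determine exactly for which pairs $(N,M)$ one has $(1+t)^{M}\equiv 1\pmod{t^N}$ in $\Z/k\Z[t]/(t^N)$ --- equivalently, when $M$ is a multiple of the multiplicative order of $1+t$ in $(\Z/k\Z[t]/(t^N))^{\times}$ --- and then to match this with the asserted criterion. For $N=1$ the quotient ring is $\Z/k\Z$, $1+t=1$, and the congruence is automatic, so $W_{1,M}$ is always normal; the delicate point is the opposite direction, and I would expect the bulk of the remaining argument to rest on a careful analysis of the binomial coefficients $\binom{M}{m}\bmod k$ for $1\le m<N$, to decide when they can all vanish.
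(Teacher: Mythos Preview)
Your treatment of part~(1) is correct and essentially identical to the paper's: both reduce to the third condition of the normality criterion, test it on $g=c$, and observe that shifting indices by $M$ preserves the residue classes modulo $N$ precisely when $N\mid M$.

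For part~(2), your analysis is in fact \emph{more} careful than the paper's, and the obstacle you run into is genuine --- but not because your method is lacking. The criterion you derive, $(1+t)^M\equiv 1\pmod{t^N}$ in $\Z/k\Z[t]$, is exactly right, and it \emph{can} hold for $N\ge 2$: already $N=2$ and $M=k$ give $(1+t)^k\equiv 1+kt\equiv 1\pmod{t^2}$, so $W_{2,k}$ is normal. You also correctly note that for $M=\infty$ one has $s=0$ and the third condition becomes vacuous, so $W_{N,\infty}$ is normal for every $N$; this by itself already contradicts the stated ``iff $N=1$''. (The case $N=0$ is a further trivial counterexample.) So the difficulty you flag with the binomial coefficients is not a gap in your argument: it is a correct detection that the claimed equivalence is false.

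The paper's own proof contains a matching slip. It tests the element $cbc^{-1}b^{-1}=c(b\acts c)^{-1}$ against $W^0$; but in the normality criterion the relevant element is $c(b^{s}\acts c)^{-1}=cb^{M}c^{-1}b^{-M}$, not $cbc^{-1}b^{-1}$. The computation as written only handles $s=1$, i.e.\ shows that $W_{N,1}$ is not normal for $N\ge 2$ --- which is true --- and says nothing about general $M$. Your power-series reduction gives the correct characterization: for finite $M$, $W_{N,M}$ is normal if and only if $k\mid\binom{M}{j}$ for all $1\le j\le N-1$, and $W_{N,\infty}$ is always normal.
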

\begin{proof}
First, we prove that $H_{N,M}^0\acts b$ is always equal to $H_{N,M}^0$.
Indeed, for all $g=\prod_{t=1}^j (b^{i_t}cb^{-i_t})^{r_t}\in \A$ we have $g\acts b=\prod_{t=1}^j (b^{i_t-1}cb^{-i_t+1})^{r_t}$.
Hence, $g$ belongs to $H_{N,M}^0$ if and only if $g\acts b$ belongs to $H_{N,M}^0$.
We also trivially have that $1(b1b^{-1})^{-1}=1$ always belongs to $H_{N,M}^0$.
Therefore, $H_{N,M}$ is normal if and only if $g(g\acts b^M)^{-1}\in H^0$ for all $g\in \A$.

Suppose that $N$ does not divide $M$.
Take $g=c\in \A$.
Then $c(c\acts b^M)^{-1}=c(b^Mcb^{-M})^{-1}$.
The sum $\sum_{i_t\equiv 0}r_t=1\not\equiv 0\pmod k$ since $k\geq 2$.
Therefore $c(c\acts b^M)^{-1}\notin H_{N,M}^0$, which implies that $H_{N,M}$ is not normal.

On the other hand, suppose now that $N$ divides $M$. Then for all $g=\prod_{t=1}^j (b^{i_t}cb^{-i_t})^{r_t}\in \A$, we have
\[
g(g\acts b^M)^{-1}=\prod_{t=1}^j (b^{i_t}cb^{-i_t})^{r_t}\cdot\prod_{t=1}^j (b^{i_t-M}cb^{-i_t+M})^{-r_t}
\]
belongs to $H_{N,M}^0$.

Now, in the case of $W_{N,M}$, we have $(0\dots 0,0)\acts cbc^{-1}b^{-1}=(01{-1}1\dots \pm1,0)$.
Therefore, as soon as $N\geq 2$, $cbc^{-1}b^{-1}$ does not belongs to $W^0$ and $W_{N,M}$ is not normal.
For $N=1$, we have $(x,i)\acts b=(x,i+1)$ and $(x,i)\acts c=(x+1,i)$.
Therefore,
\[
	W_{1,M}=\left\{g=\prod_{t=1}^j (b^{i_t}cb^{-i_t})^{r_t}b^{r_b}\ \middle|\
	\begin{gathered}
	r_b\equiv 0\pmod M\\
	\sum_{t=1}^j r_t\equiv 0\pmod k
	\end{gathered}
	\right\}
\]
is a normal subgroup.
\end{proof}
In particular, this implies that for $N>1$ dividing $M$, the subgroups $H_{N,M}$ and $W_{N,M}$ are not conjugate (equivalently the non-oriented Schreier graphs are not strongly isomorphic).
On the other hand, an easy check shows that $H_{N,1}=W_{N,1}$.
A careful look at the order of the image of $c$ in the corresponding action on the vertex set of $\orSw{k}{N}{M}$ shows that for $M<\infty$ and $N>1$, $H_{N,M}$ and $W_{N,M}$ are nearly never conjugate (this can happen only if $k$ divides all the $\binom {\lcm(M,N)}j$ for $1\leq j\leq N-1$).

Proposition~\ref{PropHW} (Part 1) and Corollary~\ref{NormalityOfH} imply the following.
\begin{theorem}\label{thmSpiderasfiniteCayley}
For $M=Nl$, the spider-web graph $\orSw{k}{N}{Nl}$ is weakly isomorphic to the Cayley graph of
\[
	\bigl(\oplus_{i=1}^N\Z/k\Z\bigr)\rtimes \Z/Nl\Z
\]
where the action of $\Z/Nl\Z$ on $\oplus_{i=1}^N\Z/k\Z$ is by shift, and with respect to the generating set $\{bc^i\}_{i=0}^{k-1}$ where $b$ is a generator of $\Z/Nl\Z$ and $c$ a generator of $\Z/k\Z$.

In particular, if we write $\lamp_{k,N}=\Z/k\Z\wr\Z/N\Z$ for the finite lamplighter group, we have 
for $N$ and $l$ coprime,
\[
	\unkSw{k}{N}{Nl}\iso\unCayley{\lamp_{k,N}\times \bigl(\Z/l\Z\bigr)}{\{(\bar c_r,1)\}_{r=0}^{k-1}}\]
which, for $l=1$, gives
\[
	\unkSw{k}{M}{M}\iso\unCayley{\lamp_{k,M}}{X}
\]
and for $N=1$
\[
	\unkSw{k}{1}{M}\iso\unCayley{\Z/k\Z\times \Z/M\Z}{\{(r,1)\}_{r=0}^{k-1}}.
\]
We also have
\[
	\unkSw{k}{1}{\infty}\iso\unCayley{\Z/k\Z\times \Z}{\{(r,1)\}_{r=0}^{k-1}}.
\]
\end{theorem}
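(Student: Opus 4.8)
The plan is to exploit the fact that when $M = Nl$ we have $N \mid M$, so by Corollary \ref{NormalityOfH} the subgroup $H_{N,M}$ is normal in $\lamp$. First I would invoke Theorem \ref{ThmSwSchreier}, which already identifies $\unkSw{k}{N}{M}$ with the Schreier graph $\unSchrei{\lamp}{H_{N,M}}{X}$; since $H_{N,M}$ is now normal, this Schreier graph is nothing but the Cayley graph of the quotient $\lamp/H_{N,M}$ with respect to the image of $X = \{\bar c_r\}_{r=0}^{k-1}$ under the quotient map. Thus the whole problem reduces to computing $\lamp/H_{N,M}$ together with the image of the generating set.

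Next I would determine $\lamp/H_{N,M}$ from the triple $(M, H^0, 1_\lamp)$ supplied by Proposition \ref{PropHW}. The projection onto $\Z$ has image $M\Z = Nl\Z$, so the $\Z$-part of the quotient is $\Z/Nl\Z$, generated by the image $\bar b$ of $b$; and since $v = 1_\lamp$ we have $b^M \in H_{N,M}$, so $\bar b$ has order exactly $M$ and meets the image of $\A$ trivially, which shows the extension $1 \to \A/H^0 \to \lamp/H_{N,M} \to \Z/Nl\Z \to 1$ splits. On the abelian side, the description of $H^0$ says precisely that an element $\sum_j r_j\, b^j c b^{-j}$ lies in $H^0$ iff for each residue class $i$ modulo $N$ the partial sum $\sum_{j \equiv i} r_j$ vanishes modulo $k$; hence sending $b^j c b^{-j}$ to the standard generator of the $i$-th factor, $i \equiv j \pmod N$, identifies $\A/H^0$ with $\oplus_{i=1}^N \Z/k\Z$. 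Because $N \mid M$, the shift by $M$ preserves residues modulo $N$, so $b^M$ acts trivially on $\A/H^0$, while $b$ itself acts by the cyclic shift of the $N$ coordinates; this yields the semidirect product $(\oplus_{i=1}^N \Z/k\Z) \rtimes \Z/Nl\Z$. Tracking generators, $b \mapsto \bar b$ and $c = b^0 c b^0 \mapsto$ a generator $\bar c$ of one $\Z/k\Z$ factor, so $\bar c_r = c^r b \mapsto \bar c^r \bar b$, which is exactly the advertised generating set $\{c^i b\}_{i=0}^{k-1}$.

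Finally I would read off the special cases. When $\gcd(N,l) = 1$ the Chinese Remainder Theorem gives $\Z/Nl\Z \iso \Z/N\Z \times \Z/l\Z$, and since the shift action factors through $\Z/N\Z$ (as $b^N$ already acts trivially) the $\Z/l\Z$-factor acts trivially; this splits off a direct factor, giving $\bigl((\oplus_{i=1}^N \Z/k\Z) \rtimes \Z/N\Z\bigr) \times \Z/l\Z = \lamp_{k,N} \times \Z/l\Z$, with $\bar b \mapsto (\bar b, 1)$ so that $\bar c_r \mapsto (\bar c_r, 1)$. Setting $l = 1$ collapses this to $\unCayley{\lamp_{k,M}}{X}$, and setting $N = 1$ makes the shift trivial, leaving the abelian group $\Z/k\Z \times \Z/M\Z$ with generators $(r,1)$. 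For the last formula ($N = 1$, $M = \infty$) I would instead use that $H_{1,\infty}$ is normal with triple $(0, H^0, 1_\lamp)$ by Proposition \ref{PropHW} and Corollary \ref{NormalityOfH}, so the $\Z$-part of the quotient is all of $\Z$ rather than $\Z/M\Z$, giving $\Z/k\Z \times \Z$.

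I would expect the main obstacle to be the careful verification that the extension genuinely splits as the claimed semidirect product with the shift action --- in particular checking that $b^M$ acts trivially on $\A/H^0$ (which is exactly where $N \mid M$ is used) and that $\bar b$ has full order $M$ --- together with the bookkeeping needed to match the image of $X$ with $\{c^i b\}$ under all the identifications, including the Chinese Remainder decomposition in the coprime case.
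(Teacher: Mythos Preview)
Your proposal is correct and follows essentially the same strategy as the paper: invoke Theorem~\ref{ThmSwSchreier} and Corollary~\ref{NormalityOfH} to realize $\unkSw{k}{N}{Nl}$ as the Cayley graph of $\lamp/H_{N,Nl}$, then identify this quotient. The only notable difference is in how the quotient is pinned down: the paper checks that the images of $b,c$ satisfy the defining relations of $L=(\oplus_{i=1}^N\Z/k\Z)\rtimes\Z/Nl\Z$, obtaining a surjection $L\twoheadrightarrow\lamp/H_{N,Nl}$, and then concludes by comparing cardinalities ($|L|=k^N\cdot Nl=|\unkSw{k}{N}{Nl}|$); you instead compute the quotient structurally from the triple $(M,H^0,1_\lamp)$, exhibiting the split extension directly. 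Both arguments are fine; the paper's cardinality shortcut is a bit quicker, while yours makes the splitting and the shift action more explicit. For the case $N=1$, $M=\infty$, the paper bypasses the quotient argument and simply matches vertex sets and adjacency directly, whereas your use of the triple $(0,H^0,1_\lamp)$ works equally well.
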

\begin{proof}
The graph $\orSw{k}{N}{Nl}$ is weakly isomorphic to the graph $\orSchrei{\lamp}{H_{N,Nl}}{X}$.
Since $H_{N,Nl}$ is normal, this graph is strongly isomorphic to the Cayley graph of $G=\lamp/H_{N,Nl}$.
We know that $G=\langle b,c\rangle$ and that $c^k=1$ in $G$.
Moreover, $b^{Nl}$ and $cb^Nc^{-1}b^{-N}$ belong to $H_{N,Nl}$ and thus in $G$ the relations $b^{Nl}=1$ and $cb^N=b^Nc$ are true.
Therefore, $G$ is a quotient of
\[
	L=\presentation{b,c}{c^k,cb^Nc^{-1}b^{-N},b^{Nl},[c,b^jcb^{-j}];j\in\N}\iso\bigl(\oplus_{i=1}^N\Z/k\Z\bigr)\rtimes \Z/Nl\Z
\]
where the action of $\Z/Nl\Z$ is by shift.
We have $\abs L=k^N\cdot N\cdot l$ and $\abs G=k^N\cdot Nl$ (the number of vertices of $\orSw{k}{N}{Nl}$).
This implies $G=L$.

If $l$ and $N$ are coprime, $\Z/Nl\Z\iso\Z/N\Z\times \Z/l\Z$ acts on $\oplus_{i=1}^N\Z/k\Z$, with $\Z/N\Z$ acting by shift and $\Z/l\Z$ acting trivially.
Hence, $L\iso\lamp_{k,N}\times\Z/l\Z$.

Generating sets are images of $\{\bar c_r\}$ in $L$.

Finally, $\orSw{k}{1}{\infty}$ and $\orCayley{\Z/k\Z\times \Z}{\{(-r,-1)\}_{r=0}^{k-1}}$ have the same vertex set: $\{1,\dots, k\}\times\Z$.
Moreover, in both graphs, there is an edge from $(i,s)$ to $(j,t)$ if and only if $t=s+1$.
\end{proof}
\begin{remark}It is interesting to observe that the family of spider-web graphs $\orkSw{k}{N}{M}$ interpolates between Cayley graphs of direct products of finite cyclic groups and Cayley graphs of wreath products of finite cyclic groups, with the corresponding generating sets.
\end{remark}
Observe however that more graphs $\unkSw{k}{N}{M}$ can a priori be weakly isomorphic to Cayley graphs of some finite groups than those given in Theorem~\ref{thmSpiderasfiniteCayley}.
For example, one can check by hand that this is the case of $\unkSw{2}{2}{3}$, thought $H_{2,2,3}$ is not normal.
%
%
%
%
%
\subsection{Transitivity}
We now investigate the vertex-transitivity of spider-web graphs. 
We already know from the last subsection that if $N$ divides $M$ the spider-web graphs are weakly isomorphic to Cayley graphs and therefore are weakly transitive.
We will give a complete characterization of transitivity for spider-web graphs, but before that we need a technical lemma.
\begin{lemma}
For any $d\in\Nstar$ and any vertices $v$ and $w$ in $\unSw{k}{N}{M}$, there is a bijection between closed reduced paths of derangement $0$ of length $d$ with initial vertex $v$ and the ones with initial vertex $w$.
\end{lemma}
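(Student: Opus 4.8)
The plan is to recognise closed reduced paths of derangement $0$ as the paths obtained by reading a fixed set of \emph{words} from the basepoint, and then to observe that membership in this word-set does not depend on the basepoint. By Theorem \ref{ThmSwSchreier} (and the explicit action constructed in its proof), $\unSw{k}{N}{M}$ is weakly isomorphic to the underlying graph of the graph of the action of $\lamp$ on $V=(\Z/k\Z)^N\times\Z/M\Z$, taken with respect to $X^{-1}$. A weak isomorphism of oriented graphs preserves lengths, reducedness, closedness and the chosen orientation, hence derangements, so it suffices to prove the statement in this graph of an action. There, since distinct labels in $X^{\pm}$ issue distinct edges at each vertex and no $x\in X$ satisfies $x^2=1$ (so $\bar e\neq e$ throughout), a path is determined by its initial vertex together with its label word; a path is reduced if and only if its label word is reduced; and, because the derangement of an edge equals the exponent of its label, the derangement of a path equals the exponent of its label word. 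Thus a closed reduced path of derangement $0$ and length $d$ issuing from a vertex $v$ is the same datum as a reduced word $\omega$ of length $d$ with $\exp_X(\omega)=0$ that fixes $v$.

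Next I would pin down the action of the exponent-zero elements. An element $\omega$ with $\exp_X(\omega)=0$ has trivial $\Z$-projection, hence lies in the abelian base $\A$. From the formulas $b\acts(x_1\dots x_N,j)=(x_Nx_1\dots x_{N-1},j-1)$ and $c\acts(x_1\dots x_N,j)=((x_1-1)x_2\dots x_N,j)$ of Theorem \ref{ThmSwSchreier}, a direct computation shows that each generator $b^icb^{-i}$ of $\A$ acts on $V$ by subtracting $1$ from the $((i\bmod N)+1)$-th letter of the string while fixing the $\Z/M\Z$-coordinate (the two $j$-shifts from $b^{-i}$ and $b^{i}$ cancel). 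Since these are commuting translations of the string-coordinate, every $\omega\in\A$ acts as $(x,j)\mapsto(x+\delta_\omega,\,j)$ for a fixed vector $\delta_\omega\in(\Z/k\Z)^N$ independent of $(x,j)$. Consequently, for such an $\omega$ the condition $v\acts\omega=v$ is equivalent to $\delta_\omega=0$, that is, to $\omega$ acting trivially on all of $V$; in particular it does not depend on the vertex $v$.

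Combining the two steps, the set
\[
	\Omega_d=\setst{\omega}{\omega\tn{ reduced},\ \tn{length }d,\ \exp_X(\omega)=0,\ \delta_\omega=0}
\]
is intrinsic, and for \emph{every} vertex the closed reduced paths of derangement $0$ and length $d$ based there are exactly the paths reading the words of $\Omega_d$. The required bijection is then ``read the same word'': to a closed reduced derangement-$0$ path at $v$ associate its label word $\omega\in\Omega_d$, and then the path reading $\omega$ from $w$. This is well defined because reducedness, length and $\exp_X(\omega)=0$ are properties of $\omega$ alone, while closedness at $w$ amounts to $\delta_\omega=0$, which already holds; reading the same word back from $v$ gives the inverse, so the correspondence is a bijection.

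The only genuinely substantial point is the computation in the second paragraph: that the exponent-zero part of $\lamp$ acts on $V$ by basepoint-independent translations of the string-coordinate fixing the $\Z/M\Z$-coordinate. This is where the specific structure of the lamplighter action enters, and it is precisely what forces the count to be basepoint-independent even though $\unSw{k}{N}{M}$ — like the de Bruijn graph $\unB_N$ from which it is built — is in general not vertex-transitive.
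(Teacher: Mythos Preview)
Your proof is correct and follows essentially the same approach as the paper's: both pass to the Schreier-graph description from Theorem \ref{ThmSwSchreier}, identify paths with label words, and observe that an exponent-zero element of $\lamp$ (i.e.\ an element of the abelian base $\A$) acts on $V=(\Z/k\Z)^N\times\Z/M\Z$ as a translation $(x,j)\mapsto(x+\delta,j)$ with $\delta$ independent of the basepoint, so that it fixes one vertex if and only if it fixes all of them; the bijection is then ``read the same word''. Your write-up is somewhat more explicit in justifying the translation form via the generators $b^icb^{-i}$, but the argument is the same.
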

\begin{proof}
The bijection is easily seen on $\unSchrei{\lamp}{H_{N,M}}{X}$.
Recall that we have
\[
	(x_1\dots x_N,i)\acts \bar c_r=\bigl((x_N-r)x_1\dots x_{N-1},i-1\bigr).
\]
For any reduced path $p$ with initial vertex $(x_1\dots x_N,i)$, there is a unique path $q$ with initial vertex $(y_1\dots y_N,j)$ with the same label $l\in\lamp$.
Since the derangement is $0$, we have that
\[
	(z_1\dots z_N,i)\acts l=((z_1+a_1)\dots (z_N+a_N),i)
\]
for some integers $a_i$.
In particular, the end vertex of $p$ is $((x_1+a_1)\dots (x_N+a_N),i)$, while the end vertex of $q$ is $((y_1+a_1)\dots (y_N+a_N),j)$.
Therefore, $p$ is closed if and only if all the $a_i$'s are equal to $0$, if and only if $q$ is closed.
\end{proof}

\begin{theorem}\label{thmTransitivity}
For all $M\in\barN$ and $N\in\Nstar$, the graphs $\orSw{k}{N}{M}$ and $\unSw{k}{N}{M}$ are weakly transitive (i.e. is transitive by weak automorphisms) if and only if $M\geq N$.
\end{theorem}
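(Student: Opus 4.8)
The plan is to fix an explicit algebraic model of the graph and then treat the two implications with different tools: an automorphism construction when $M\geq N$, and a walk\hyphen counting obstruction built on the preceding lemma when $M<N$. By Corollary~\ref{corIsoSwB} and Theorem~\ref{ThmWeakIsomorphism}, $\orSw{k}{N}{M}$ is weakly isomorphic to $\vec\Gamma_{k,N}\otimes\vec C_M$, which is connected by Lemma~\ref{LemmaConnected} and hence is exactly the graph of the action $\bar c_r\acts(F,j)=\bigl((1+t)F+r,\,j+1\bigr)$ of $\lamp_k$ on $R_N\times\Z/M\Z$, where $R_N\defegal\Z/k\Z[t]/(t^N)$ and a string $x_1\cdots x_N$ is identified with $\sum_i x_it^{i-1}$ (the $r$ running over the constants $\Z/k\Z$). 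Forgetting labels, the forward neighbours of $(F,j)$ form the coset $(1+t)F+\Z/k\Z$ in slice $j+1$. The crucial fact is that $\{(1+t)^i\}_{i=0}^{N-1}$ is a $\Z/k\Z$\hyphen basis of $R_N$ (unitriangular change of basis), so the span $S_M\defegal\langle(1+t)^i:0\le i\le M-1\rangle$ equals $R_N$ if and only if $M\geq N$; this dichotomy drives both directions.

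For $M\geq N$ I would exhibit enough orientation\hyphen preserving weak automorphisms. Two families work: the slice rotation $\rho\colon(F,j)\mapsto(F,j+1)$, and the translations $\psi\colon(F,j)\mapsto(F+d_j,j)$ for any family $(d_j)_{j\in\Z/M\Z}$ with $d_{j+1}-(1+t)d_j\in\Z/k\Z$ for all $j$; a direct check that $\psi$ carries the forward\hyphen neighbour coset of $(F,j)$ onto that of $(F+d_j,j)$ shows this is exactly the condition required. Unwinding the recursion once around the cycle forces $\bigl((1+t)^M-1\bigr)d_0=-\sum_{i=0}^{M-1}(1+t)^{M-1-i}e_i$ with $e_i\in\Z/k\Z$, whose right\hyphen hand side ranges over $S_M$. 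Since $S_M=R_N$ when $M\geq N$, every $d_0\in R_N$ is realised, so the translations move $(0,0)$ to each $(d_0,0)$, and composing with $\rho^j$ reaches every $(d_0,j)$, giving transitivity. (For $M=\infty$ there is no cyclic constraint and any $d_0$ is realised trivially.)

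For $M<N$ I would separate two vertices by a graph invariant, using the preceding lemma to discard the derangement\hyphen$0$ part. The number of reduced closed paths of length $M$ is a weak invariant, and by that lemma its derangement\hyphen$0$ contribution is vertex\hyphen independent; hence if the graph were transitive, the count of reduced closed paths of length $M$ with $\der\neq0$ would be vertex\hyphen independent too. A length\hyphen$M$ closed path has derangement $M-2b$, where $b$ is the number of edges traversed against the orientation, and the only nonzero values $\equiv0\pmod M$ are $\pm M$, i.e.\ the all\hyphen forward and all\hyphen backward paths; these are automatically reduced, and reversal pairs them, so the count is $2$ times the number of all\hyphen forward closed $M$\hyphen walks at the base vertex. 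Such a walk from $(F,0)$ is closed iff $\sum_{i=1}^M(1+t)^{M-i}r_i=(1-(1+t)^M)F$, which, the powers being independent for $M\le N$, has exactly one solution when $(1-(1+t)^M)F\in S_M$ and none otherwise. Taking $F=0$ gives the all\hyphen zero walk, so the invariant is positive at $(0,0)$; taking $F=1$, the element $(1+t)^M-1$ has $t^M$\hyphen coefficient $\binom{M}{M}=1\neq0$ with $t^M\neq0$ in $R_N$ (as $M<N$), hence lies outside $S_M$ and the invariant vanishes at $(1,0)$. So $(0,0)$ and $(1,0)$ are inequivalent and the graph is not transitive.

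The oriented and non\hyphen oriented statements follow together: the automorphisms used for $M\geq N$ preserve orientation, so they serve $\orSw{k}{N}{M}$ as well, while for $M<N$ the obstruction is an invariant of the underlying graph, and an oriented\hyphen transitive graph has transitive underlying graph. I expect the delicate step to be the derangement bookkeeping in the second direction: one must confirm that within length exactly $M$ the only derangements divisible by $M$ and nonzero are $\pm M$, and that the preceding lemma legitimately cancels the (possibly large) derangement\hyphen$0$ contribution, so that the residual all\hyphen forward count genuinely obstructs transitivity.
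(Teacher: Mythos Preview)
Your argument is correct and follows the same two\hyphen part architecture as the paper: for $M\geq N$ exhibit a slice rotation together with first\hyphen coordinate ``translations'' and check they act transitively; for $M<N$ invoke the preceding lemma to neutralise the derangement\hyphen$0$ closed paths of length $M$ and then separate two vertices by the count of all\hyphen forward closed $M$\hyphen walks.

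The packaging differs in a way worth noting. The paper works in the de Bruijn coordinates and writes down one explicit weak automorphism $\psi$ (incrementing a single digit whose position depends on the slice), then combines it with the rotation $T$. You instead pass to the ring model $R_N=\Z/k\Z[t]/(t^N)$ via Theorem~\ref{ThmWeakIsomorphism} and parametrise the whole family of slice\hyphen wise translations $(F,j)\mapsto(F+d_j,j)$, reducing existence to the span condition $S_M=\langle(1+t)^i:0\le i<M\rangle=R_N$. This makes the dichotomy $M\geq N$ versus $M<N$ appear as a single linear\hyphen algebra fact governing both directions: when $S_M=R_N$ every $d_0$ is attainable (transitivity), and when $S_M\subsetneq R_N$ the element $(1+t)^M-1$ witnesses that the all\hyphen forward closed walk exists at $F=0$ but not at $F=1$. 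The paper's version is more hands\hyphen on and self\hyphen contained in the spider\hyphen web coordinates; yours is shorter and exposes the common mechanism, at the cost of routing through the weak isomorphism with $\vec\Gamma_{k,N}$. For the $M<N$ half the two arguments are essentially identical once translated between models, with the same pair of base vertices and the same use of the lemma.
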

\begin{proof}
First, observe that any automorphism of $\orSw{k}{N}{M}$ naturally induces an automorphism of $\unSw{k}{N}{M}$.
Therefore, it is sufficient to prove that if $M\geq N$ then $\orSw{k}{N}{M}$ is weakly transitive and that if $M<N$, the graph $\unSw{k}{N}{M}$ is not transitive.

It is easy to check that for every $M$ the function $\eta$ on $\vec C_M$ defined by $\eta(i)\defegal i+1$ (where the addition is taken modulo $M$) is an automorphism.
Therefore, $T\defegal\Id\otimes\eta$ is an automorphism of $\orSw{k}{N}{1}\otimes\vec C_M\iso\orSw{k}{N}{M}$.
It is even a strong automorphism for every $M$ --- even for $M$ smaller than $N$~--- since the labeling of $\orSw{k}{N}{M}$ comes from the labeling of $\orSw{k}{N}{1}$ and the fact that $\Id$ is a strong automorphism.

We now define another function $\psi$ on $\orSw{k}{N}{M}$ by the following formula on vertices:
\[
	\psi(x_1\dots x_N,t)\defegal
		\begin{cases}
		(x_1\dots(x_{N-t}+1)\dots x_N,t) & \tn{ if } 0\leq t\leq N-1\\
		(x_1\dots x_N,t)\tn{ else. }
		\end{cases}
\]
We define $\psi$ on edges in the following way: the unique edge with initial vertex $(x,t)$ and label $i$ is sent on the unique edge with initial vertex $\psi(x,t)$ and label $i$ if $t\not\equiv -1\pmod M$, or on the edge with label $i+1$ if $t\equiv -1\pmod M$.
We claim that with this definition, $\psi$ is a weak isomorphism if $M\geq N$.
To prove that, it remains to check that for any edge $e$, $\tau\bigl(\psi(e)\bigr)=\psi\bigl(\tau(e)\bigr)$.
Since the definition of $\psi$ depends on $t$, we have four different cases.
The first is when $0\leq t\leq N-2$.
The second is for $t=N-1$.
The third when $N-1<t<M-1$ and the last one when $t=M-1$.
The first, second and fourth cases are easy computations left to the reader.
In the third case, $\psi$ acts as the identity and there is nothing to prove.
We have
\begin{align*}
	T^{-i}\psi^{x_i}T^i(0\dots 0,0)&=T^{-i}\psi^{x_i}(0\dots 0,i)\\
	&=(0\dots x_i\dots 0,i)\\
	&=(0\dots x_i\dots 0,0).
\end{align*}
Therefore, for any vertex $(x_1\dots x_N,t)$ we have
\[
	\biggl(T^i\cdot\prod_{j=0}^n T^{-j}\psi^{x_j}T^j\biggr)  (0\dots 0,0)=(x_1\dots x_N,t),
\]
which proves the transitivity of $\orSw{k}{N}{M}$ when $M\geq N$.

Now, if $M<N$ look at the two vertices
\[
	v=(0\dots 0,0)\quad\textnormal{and}\quad w= (10\dots 0,0)
\]
of $\unSw{k}{N}{M}$.
By the previous lemma, there is the same number of closed paths with derangement $0$ and length $M$ based at $v$ and at $w$.
Since $M<N$, there is no closed path of length $M$ starting at $w$ with non-zero derangement.
On the other hand, there is at least one such closed path starting at $v$: the path where all edges have label $R_0$.
Therefore, there is strictly more closed paths of length $M$ starting at $v$ than such paths starting at $w$.
This implies that $\unSw{k}{N}{M}$ is not transitive.
\end{proof}
\begin{remark}
It is possible to demonstrate a refinement of this theorem. Namely that if $M<N$, the number of orbits of $\orkSw{k}{N}{M}$ under its group of automorphisms is bounded from below by $\frac NM$ and from above by $k^{N-M}$, and the number of orbits of $\unkSw{k}{N}{M}$ under its group of automorphisms is bounded from below by $\max(2,\frac N{2M})$ and from above by $k^{N-M}$.
In particular, for $k$ and $M$ fixed, the number of orbits of $\unkSw{k}{N}{M}$ is unbounded.
\end{remark}
\subsection{Line graphs, Eulerian and Hamiltonian cycles and coverings}
The family of de Bruijn graphs is well-known to enjoy some nice graph-theoretic properties.
The aim of this subsection is to verify that the family of spider-web graphs share many of them and can thus be indeed viewed as a natural extension of de Bruijn graphs.
\begin{proposition}
For all $M\in\barN$ and $N\in\Nstar$, the spider-web graph $\orSw{k}{N+1}{M}$ is (weakly) isomorphic to the line graph of $\orSw{k}{N}{M}$.
\end{proposition}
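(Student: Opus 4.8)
The plan is to reduce the statement to three facts already at our disposal: the tensor-product description of spider-web graphs (Corollary \ref{corIsoSwB}), the line-graph description of de Bruijn graphs (Lemma \ref{LemmaLineDeBruijn}), and the commutation of the line-graph operation with tensor products (Lemma \ref{PropLineTensor}). All isomorphisms below are understood as isomorphisms of oriented graphs, i.e.\ weak ones, which is the appropriate category here since the line graph carries no canonical labeling.

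First I would rewrite the left-hand side: by Corollary \ref{corIsoSwB}, $\orSw{k}{N+1}{M}\iso\orB_{N+1}\otimes\vec C_M$. Lemma \ref{LemmaLineDeBruijn} gives a weak isomorphism $\orB_{N+1}\iso L(\orB_{N})$, and since the tensor product preserves isomorphisms (the discussion preceding Lemma \ref{lemmaCovering}), tensoring with $\vec C_M$ yields $\orB_{N+1}\otimes\vec C_M\iso L(\orB_{N})\otimes\vec C_M$.

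The only point to check by hand is that the oriented cycle is its own line graph, $L(\vec C_M)\iso\vec C_M$ for every $M\in\barN$. This is immediate from the definition: each vertex $i$ of $\vec C_M$ has a unique outgoing edge $e_i$ (from $i$ to $i+1$), so $i\mapsto e_i$ is a bijection from the vertices of $\vec C_M$ onto those of $L(\vec C_M)$, and since $\tau(e_i)=\iota(e_{i+1})$ there is exactly one edge of $L(\vec C_M)$ from $e_i$ to $e_{i+1}$; this is precisely the structure of $\vec C_M$ (the same computation covers $M=\infty$, where $\vec C_\infty=\vec\Z$). Substituting $\vec C_M\iso L(\vec C_M)$ and then invoking Lemma \ref{PropLineTensor}, I obtain
\[
	L(\orB_{N})\otimes\vec C_M\iso L(\orB_{N})\otimes L(\vec C_M)\iso L(\orB_{N}\otimes\vec C_M).
\]
Finally, applying the line-graph operation to the isomorphism $\orB_{N}\otimes\vec C_M\iso\orSw{k}{N}{M}$ of Corollary \ref{corIsoSwB} (the line graph is functorial, so it sends isomorphic graphs to isomorphic line graphs) gives $L(\orB_{N}\otimes\vec C_M)\iso L(\orSw{k}{N}{M})$. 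Chaining these weak isomorphisms proves $\orSw{k}{N+1}{M}\iso L(\orSw{k}{N}{M})$.

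There is no genuinely hard step: the argument is a diagram chase through the established lemmas. The only things requiring attention are the bookkeeping that keeps all isomorphisms in the weak category (Corollary \ref{corIsoSwB} is strong, but Lemma \ref{LemmaLineDeBruijn} is only weak, and the line graph has no canonical labeling, so weak is the right notion throughout) and the elementary verification that $L(\vec C_M)\iso\vec C_M$.
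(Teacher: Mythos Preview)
Your proof is correct and follows exactly the same route as the paper's own argument: reduce via Corollary \ref{corIsoSwB} to the tensor form $\orB_{N}\otimes\vec C_M$, then combine Lemma \ref{LemmaLineDeBruijn}, Lemma \ref{PropLineTensor}, and the observation that $\vec C_M$ is its own line graph. The paper states this in one sentence while you spell out the chain and the verification $L(\vec C_M)\iso\vec C_M$, but the content is identical.
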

\begin{proof}
This follows from the same result for de Bruijn graphs (Lemma~\ref{LemmaLineDeBruijn}), the fact that $\orSw{k}{N}{M}$ is the tensor product $\orB_{N}\otimes \vec C_M$, Lemma~\ref{PropLineTensor} and the fact that $\vec C_M$ is its own line graph.
\end{proof}
\begin{proposition}
For all $M\in\barN$ and $N\in\Nstar$, the spider-web graph $\orSw{k}{N}{M}$ is Eulerian (there exists a  closed path $p$ consisting of edges of $\orSw{k}{N}{M}$ that visits each edge exactly once) and Hamiltonian (there exists a  closed path that visits each vertex exactly once)
\end{proposition}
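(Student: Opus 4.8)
The plan is to treat the two properties separately and, for Hamiltonicity, to exploit the line graph structure recorded in the previous proposition. First I would establish that $\orSw{k}{N}{M}$ is Eulerian by appealing to the classical criterion that a finite connected oriented graph admits a closed Eulerian path if and only if the in-degree equals the out-degree at every vertex. Connectedness is exactly Lemma \ref{LemmaConnected}. By Definition \ref{DefinitionSw} every vertex has out-degree $k$, so it only remains to compute in-degrees. The edges arriving at $(x_1\dots x_N,i)$ are precisely those issued from a vertex of the form $(zx_1\dots x_{N-1},i-1)$ with $z\in\{0,\dots,k-1\}$, each such edge carrying the label $R_{x_N}$; hence the in-degree is also $k$. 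Thus in-degree equals out-degree everywhere and, for finite $M$, the graph is Eulerian.

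For Hamiltonicity I would invoke the standard fact that the line graph of a finite connected Eulerian oriented graph is Hamiltonian. Concretely, if $C=(e_1,\dots,e_m)$ is a closed Eulerian path in $\vec\Gamma$ (so $m=\abs{\vec E}$ and each edge occurs once), then consecutive edges satisfy $\tau(e_j)=\iota(e_{j+1})$, so, recalling from Definition \ref{defLine} that the vertex set of $L(\vec\Gamma)$ is $\vec E$, the cyclic sequence $(e_1,\dots,e_m)$ visits each vertex of $L(\vec\Gamma)$ exactly once with successive vertices joined by an edge, i.e. it is a Hamiltonian cycle. Applying this with $\vec\Gamma=\orSw{k}{N-1}{M}$, which is connected and Eulerian by the first part, and using that $\orSw{k}{N}{M}$ is the line graph of $\orSw{k}{N-1}{M}$ (the previous proposition), yields that $\orSw{k}{N}{M}$ is Hamiltonian for all $N\geq 1$.

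It remains to settle the base case $N=0$, where $\orSw{k}{0}{M}$ is the thick oriented cycle on $M$ vertices: traversing it once, $0\to 1\to\cdots\to M-1\to 0$, gives a Hamiltonian cycle, while the in-degree/out-degree balance above again produces an Eulerian circuit. The only genuine subtlety is the case $M=\infty$, where the graph is infinite and both ``Eulerian'' and ``Hamiltonian'' must be understood in the sense of bi-infinite paths traversing each edge, respectively visiting each vertex, exactly once; the same degree balance and the same line-graph correspondence deliver such bi-infinite paths, the Eulerian one being obtained from the finite approximations by a standard exhaustion argument. I expect this infinite case to be the only point requiring real care, the finite case being entirely routine once the in-degree computation and the Eulerian-to-Hamiltonian correspondence are in place.
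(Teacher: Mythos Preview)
Your proof is correct and follows the same route as the paper: Eulerian via the in-degree $=$ out-degree criterion together with connectedness, Hamiltonicity for $N\geq 1$ via the line-graph correspondence with $\orSw{k}{N-1}{M}$, and the base case $N=0$ handled directly on the thick cycle. You are in fact more careful than the paper in spelling out the in-degree computation and in flagging the $M=\infty$ case, which the paper's proof silently treats as if finite.
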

\begin{proof}
The directed graph $\orSw{k}{N}{M}$ is finite, connected and for every vertex $v$ in $\orSw{k}{N}{M}$ the number of outgoing edges is equal to the number of ingoing edges. Therefore, $\orSw{k}{N}{M}$ is Eulerian.

For $N\geq 1$, the graph $\orSw{k}{N}{M}$ is isomorphic to the line graph  of $\orSw{k}{N-1}{M}$. This line graph is Hamiltonian since $\orSw{k}{N-1}{M}$ is Eulerian.
Finally, $\orSw{k}{0}{M}$ is a ``thick'' oriented circle: the vertex set is $M$ and for every vertex $i$ there is $k$ edges from $i$ to $i+1$. This graph is obviously Hamiltonian.
\end{proof}
We proved that $\orSw{k}{N}{M}$ is Eulerian and Hamiltonian as an oriented graph.
That is, the closed path in question consists only of edges of $\orSw{k}{N}{M}$.
This trivially implies that $\unSw{k}{N}{M}$ is Eulerian and Hamiltonian.
Indeed, for an oriented graph $\vec\Theta$, being Eulerian (or Hamiltonian) as an oriented graph is a stronger property that $\underline{\vec\Theta}$ being Eulerian (or Hamiltonian) as a non-oriented graph.
Finally, we generalize Corollary~\ref{corCovering} and show that spider-web graphs form towers of graphs coverings, both in $N$ and, in a certain sense, in $M$.
\begin{proposition}
For all $M\in\barN$ and $N\in\N$, the oriented graph $\orSw{k}{N}{M}$ (weakly) covers $\orSw{k}{N-1}{M}$.

For every $i\in\N$, the oriented graph $\orSw{k}{N}{iM}$ (weakly) covers $\orSw{k}{N}{M}$.
\end{proposition}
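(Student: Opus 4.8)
The plan is to reduce both statements to the tensor-product structure of spider-web graphs together with the covering behaviour of the two factors. By Corollary \ref{corIsoSwB} we may identify $\orSw{k}{N}{M}$ with $\orB_{N}\otimes\vec C_M$ throughout, so both claims become assertions about tensor products, and the essential tool will be Lemma \ref{lemmaCovering}, which guarantees that a tensor product of coverings of oriented graphs is again a covering. Since everything here is at the level of weak (unlabeled oriented) coverings, this is exactly the right mechanism.

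For the first part (covering in $N$), I would take the covering $\orB_{N}\to\orB_{N-1}$ provided by Corollary \ref{corCovering} for the first factor and the identity covering $\Id\colon\vec C_M\to\vec C_M$ for the second. Applying Lemma \ref{lemmaCovering} to these two coverings yields a covering $\orB_{N}\otimes\vec C_M\to\orB_{N-1}\otimes\vec C_M$, which under the identification of Corollary \ref{corIsoSwB} is precisely a weak covering $\orSw{k}{N}{M}\to\orSw{k}{N-1}{M}$. This handles $M=\infty$ as well, since $\vec C_\infty=\vec\Z$ likewise covers itself by the identity.

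For the second part (covering in $M$), I would keep $\orB_{N}$ fixed with the identity covering and instead produce a covering between the cyclic factors. The key observation to verify is that for finite $M$ the reduction map $\pi\colon\Z/iM\Z\to\Z/M\Z$ defines a covering $\vec C_{iM}\to\vec C_M$: on vertices $\pi$ sends $j$ to $j\bmod M$, and it sends the unique edge from $j$ to $j+1$ in $\vec C_{iM}$ to the unique edge from $\pi(j)$ to $\pi(j)+1$ in $\vec C_M$. Since every vertex of an oriented cycle has exactly one outgoing edge, each induced map on stars is a bijection of singletons, so $\pi$ is indeed a covering. Lemma \ref{lemmaCovering} applied to $\Id\colon\orB_{N}\to\orB_{N}$ and $\pi\colon\vec C_{iM}\to\vec C_M$ then gives a covering $\orB_{N}\otimes\vec C_{iM}\to\orB_{N}\otimes\vec C_M$, i.e.\ a weak covering $\orSw{k}{N}{iM}\to\orSw{k}{N}{M}$. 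For $M=\infty$ we have $iM=\infty$ and the statement reduces to the identity covering.

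I do not expect a genuine obstacle here: once the problem is phrased through Corollary \ref{corIsoSwB} and Lemma \ref{lemmaCovering}, the only point requiring an argument is the elementary verification that $\vec C_{iM}$ covers $\vec C_M$ via reduction modulo $M$, and this follows immediately from the fact that every vertex of an oriented cycle has a single outgoing edge.
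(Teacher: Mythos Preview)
Your proposal is correct and follows exactly the same approach as the paper: reduce to the tensor-product description via Corollary \ref{corIsoSwB}, invoke Corollary \ref{corCovering} for the de Bruijn factor and the obvious covering $\vec C_{iM}\to\vec C_M$ for the cyclic factor, and conclude with Lemma \ref{lemmaCovering}. Your write-up is simply more explicit than the paper's two-line version.
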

\begin{proof}
By Corollary~\ref{corCovering}, we know that $\orB_{N+1}$ covers $\orB_N$ and it is easily seen that $\vec C_{iM}$ covers $\vec C_M$.
A simple application of Lemma~\ref{lemmaCovering} gives the desired result. 
\end{proof}
Note that any covering of oriented graphs $\phi\colon\vec\Delta_1\to\vec\Delta_2$ naturally induces a covering between the underlying graphs.


\begin{thebibliography}{10}
\bibitem{MR2904386}
Ajit~C. Balram and Deepak Dhar.
\newblock Non-perturbative corrections to mean-field critical behavior: the
  spherical model on a spider-web graph.
\newblock {\em J. Phys. A}, 45(12):125006, 14, 2012.

\bibitem{MR2131635}
Laurent Bartholdi and Wolfgang Woess.
\newblock Spectral computations on lamplighter groups and {D}iestel-{L}eader
  graphs.
\newblock {\em J. Fourier Anal. Appl.}, 11(2):175--202, 2005.

\bibitem{MR1873300}
Itai Benjamini and Oded Schramm.
\newblock Recurrence of distributional limits of finite planar graphs.
\newblock {\em Electron. J. Probab.}, 6:no. 23, 13 pp. (electronic), 2001.

\bibitem{CPT}
Phillip E~C Compeau, Pavel~A Pevzner, and Glenn Tesler.
\newblock How to apply de {B}ruijn graphs to genome assembly.
\newblock {\em Nat Biotech}, 29(11):987--991, 2011.

\bibitem{MR1621013}
Charles Delorme and Jean-Pierre Tillich.
\newblock The spectrum of de {B}ruijn and {K}autz graphs.
\newblock {\em European J. Combin.}, 19(3):307--319, 1998.

\bibitem{MR1934693}
Warren Dicks and Thomas Schick.
\newblock The spectral measure of certain elements of the complex group ring of
  a wreath product.
\newblock {\em Geom. Dedicata}, 93:121--137, 2002.

\bibitem{MR1952401}
G{\'a}bor Elek.
\newblock On the analytic zero divisor conjecture of {L}innell.
\newblock {\em Bull. London Math. Soc.}, 35(2):236--238, 2003.

\bibitem{GLN1}
Rostislav Grigorchuk, Daniel Lenz, and Nagnibeda Tatiana.
\newblock Spectra of schreier graphs of {G}rigorchuk's group and {S}chroedinger
  operators with aperiodic order.
\newblock arXiv:1412.6822.

\bibitem{GLN2}
Rostislav Grigorchuk, Daniel Lenz, and Nagnibeda Tatiana.
\newblock Schreier graphs of {G}rigorchuk's group and a subshift associated to
  a non-primitive substitution.
\newblock In T.~Ceccherini-Silberstein, M.~Salvatori, and E.~Sava-Huss,
  editors, {\em Groups, Graphs, and Random Walks}. London Mathematical Society
  Lecture Note Series, Cambridge University Press, to appear (2016).

\bibitem{MR3278388}
Rostislav~I. Grigorchuk and Rostyslav Kravchenko.
\newblock On the lattice of subgroups of the lamplighter group.
\newblock {\em Internat. J. Algebra Comput.}, 24(6):837--877, 2014.

\bibitem{MR1841755}
Rostislav~I. Grigorchuk, Volodymyr~V. Nekrashevich, and Vitaliy~I.
  Sushchanski{\u\i}.
\newblock Automata, dynamical systems, and groups.
\newblock {\em Tr. Mat. Inst. Steklova}, 231(Din. Sist., Avtom. i Beskon.
  Gruppy):134--214, 2000.

\bibitem{MR1866850}
Rostislav~I. Grigorchuk and Andrzej {\.Z}uk.
\newblock The lamplighter group as a group generated by a 2-state automaton,
  and its spectrum.
\newblock {\em Geom. Dedicata}, 87(1-3):209--244, 2001.

\bibitem{Ikeno}
Nobuichi Ikeno.
\newblock A limit on crosspoint numbers.
\newblock {\em IRE Trans. Inform. Theory}, 5:18--196, 1959.

\bibitem{MR1788124}
Wilfried Imrich and Sandi Klav{\v{z}}ar.
\newblock {\em Product graphs}.
\newblock Wiley-Interscience Series in Discrete Mathematics and Optimization.
  Wiley-Interscience, New York, 2000.
\newblock Structure and recognition, With a foreword by Peter Winkler.

\bibitem{MR2252901}
Mark Kambites, Pedro~V. Silva, and Benjamin Steinberg.
\newblock The spectra of lamplighter groups and {C}ayley machines.
\newblock {\em Geom. Dedicata}, 120:193--227, 2006.

\bibitem{PH}
Paul-Henry Leemann.
\newblock Limits of {R}auzy graphs and horocyclic products of trees, in
  preparation.

\bibitem{MR2162164}
Volodymyr Nekrashevych.
\newblock {\em Self-similar groups}, volume 117 of {\em Mathematical Surveys
  and Monographs}.
\newblock American Mathematical Society, Providence, RI, 2005.

\bibitem{MR1099797}
Nicholas Pippenger.
\newblock The blocking probability of spider-web networks.
\newblock {\em Random Structures Algorithms}, 2(2):121--149, 1991.

\bibitem{MR1176870}
Nicholas Pippenger.
\newblock The asymptotic optimality of spider-web networks.
\newblock {\em Discrete Appl. Math.}, 37/38:437--450, 1992.

\bibitem{MR2257252}
Nicholas Pippenger.
\newblock The linking probability of deep spider-web networks.
\newblock {\em SIAM J. Discrete Math.}, 20(1):143--159 (electronic), 2006.

\bibitem{MR0084896}
Yu.~V. Prohorov.
\newblock Convergence of random processes and limit theorems in probability
  theory.
\newblock {\em Teor. Veroyatnost. i Primenen.}, 1:177--238, 1956.

\bibitem{MR0476875}
Jean-Pierre Serre.
\newblock {\em Arbres, amalgames, {${\rm SL}_{2}$}}.
\newblock Soci\'et\'e Math\'ematique de France, Paris, 1977.
\newblock Avec un sommaire anglais, R{\'e}dig{\'e} avec la collaboration de
  Hyman Bass, Ast{\'e}risque, No. 46.

\bibitem{MR2197829}
Pedro~V. Silva and Benjamin Steinberg.
\newblock On a class of automata groups generalizing lamplighter groups.
\newblock {\em Internat. J. Algebra Comput.}, 15(5-6):1213--1234, 2005.

\bibitem{MR2138121}
Wolfgang Woess.
\newblock Lamplighters, {D}iestel-{L}eader graphs, random walks, and harmonic
  functions.
\newblock {\em Combin. Probab. Comput.}, 14(3):415--433, 2005.

\bibitem{MR891925}
Fu~Ji Zhang and Guo~Ning Lin.
\newblock On the de {B}ruijn-{G}ood graphs.
\newblock {\em Acta Math. Sinica}, 30(2):195--205, 1987.

\end{thebibliography}
\end{document}